\documentclass[12pt, a4paper, leqno]{amsart}

\usepackage{amsmath}
\usepackage{amsfonts}
\usepackage{amssymb}
\usepackage{mathrsfs}
\usepackage{color}
\usepackage[T1]{fontenc} 
\usepackage{url}

\setlength{\oddsidemargin}{0cm} \setlength{\evensidemargin}{0cm}
\setlength{\textwidth}{15.92cm} \setlength{\topmargin}{0cm}
\setlength{\textheight}{23.5cm} 



\numberwithin{equation}{section}

\newtheorem{theorem}{Theorem}[section]
\newtheorem{definition}[theorem]{Definition}
\newtheorem{lemma}[theorem]{Lemma}
\newtheorem{proposition}[theorem]{Proposition}
\newtheorem{corollary}[theorem]{Corollary}

\theoremstyle{remark}
\newtheorem{remark}[theorem]{Remark}

\theoremstyle{plain}

\providecommand{\loc}{{\ensuremath{\mathrm{loc}}}}

\newcommand{\W}{\mathcal{W}^{\alpha}_{\alpha_1,\alpha_2}}
\newcommand{\w}{\textbf{\textit{w}}}

\newcommand{\px}{{p(\cdot)}}
\newcommand{\qx}{{q(\cdot)}}
\newcommand{\pzx}{{p_0(\cdot)}}
\newcommand{\pumx}{{p_1(\cdot)}}
\newcommand{\qzx}{{q_0(\cdot)}}
\newcommand{\qumx}{{q_1(\cdot)}}

\newcommand{\A}{A^{\textbf{\textit{w}}}_{\px,\qx}}
\newcommand{\B}{B^{\textbf{\textit{w}}}_{\px,\qx}}
\newcommand{\F}{F^{\textbf{\textit{w}}}_{\px,\qx}}

\newcommand{\as}{a^{\textbf{\textit{w}}}_{\px,\qx}}
\newcommand{\bs}{b^{\textbf{\textit{w}}}_{\px,\qx}}
\newcommand{\fs}{f^{\textbf{\textit{w}}}_{\px,\qx}}

\newcommand{\Nz}{\ensuremath{\mathbb{N}_0}}
\newcommand{\R}{\mathbb{R}}
\newcommand{\N}{\mathbb{N}}
\newcommand{\Z}{\mathbb{Z}}
\newcommand{\C}{\mathbb{C}}
\newcommand{\cS}{\mathcal{S}}

\newcommand{\Rn}{{\mathbb{R}^n}}
\newcommand{\Zn}{{\mathbb{Z}^n}}

\DeclareMathOperator{\supp}{supp}

\def\esssup{\operatornamewithlimits{ess\,sup}}
\def\essinf{\operatornamewithlimits{ess\,inf}}
\newcommand{\PPlog}{\mathcal{P}^{\log}(\Rn)}
\newcommand{\PP}{\mathcal{P}(\Rn)}

\pagestyle{headings}

\begin{document}

\title[Atomic and molecular decompositions]{Atomic and molecular decompositions in variable exponent $2$-microlocal spaces and applications}

\author[A. Almeida]{Alexandre Almeida$^{*}$}
\address{Center for R\&D in Mathematics and Applications, Department of Mathematics, University of Aveiro, 3810-193 Aveiro, Portugal}
\email{jaralmeida@ua.pt}

\author[A. Caetano]{Ant\'{o}nio Caetano}
\address{Center for R\&D in Mathematics and Applications, Department of Mathematics, University of Aveiro, 3810-193 Aveiro, Portugal}
\email{acaetano@ua.pt}

\thanks{$^*$ Corresponding author.}
\thanks{This work was partially supported by Portuguese funds through CIDMA (Center for Research and Development in Mathematics and Applications) and FCT (Foundation for Science and Technology) within project UID/MAT/04106/2013.}
\thanks{\copyright 2015. Licensed under the CC-BY-NC-ND 4.0 license http://creativecommons.org/licenses/by-nc-nd/4.0/}
\thanks{Formal publication: http://dx.doi.org/10.1016/j.jfa.2015.11.010}

\date{\today}

\subjclass[2010]{46E35, 46E30, 42B25, 42B08}

\keywords{Variable exponents; Besov-Triebel-Lizorkin spaces; Atoms and molecules; Sobolev embeddings.}

\begin{abstract}
In this article we study atomic and molecular decompositions in $2$-microlocal Besov and Triebel--Lizorkin spaces with variable integrability. We show that, in most cases, the convergence implied in such decompositions holds not only in the distributions sense, but also in the function spaces themselves. As an application, we give a simple proof for the denseness of the Schwartz class in such spaces. Some other properties, like Sobolev embeddings, are also obtained via atomic representations.
\end{abstract}

\maketitle


\section{Introduction}\label{sec:intro}

In this paper we deal with atomic and molecular decompositions for $2$-microlocal spaces of Besov and Triebel-Lizorkin type with all exponents variable, including applications, for example, to Sobolev type embeddings. This work can be viewed as a continuation of our paper \cite{AlmC15a}, where key properties like characterizations by Peetre maximal functions, liftings and Fourier multipliers were studied.

We also refer to \cite{AlmC15a} (and the references therein) for a review on the scales $\B$ and $\F$ helping to contextualize our study. We recall that the investigation of function spaces with variable exponents has been partially motivated by applications to fluid dynamics \cite{Ruz00}, image processing \cite{CheLR06,HarHLT13,Tiir14},
PDE and the calculus of variations \cite{AceM01,Fan07,LZhang13}; see also the monographs \cite{C-UF13,DHHR11} and the survey \cite{HarHLN10} for further details.

Atomic and molecular representations for the spaces $B^{\textbf{\textit{w}}}_{\px,q}$ and $\F$ (so with constant $q$ in the $B$ case) were already obtained by Kempka in \cite{Kem10}. In the present article we give characterizations in terms of atoms and molecules for the full scales above including the difficult case of variable $q$ in the Besov space $B^w_{\px,\qx}$ (see Section~\ref{sec:atom-mol-charact}) which, as we can see later, is far from being a mere extension. In fact, the mixed sequences spaces $\ell_\qx(L_\px)$ behind do not share some fundamental properties as in the constant exponent situation (like the boundedness of the Hardy-Littlewood maximal operator), and they are hard to deal with particularly when the exponent $q$ is unbounded.

We would like to emphasize that even in the cases studied in \cite{Kem10} our statements have different formulations. The idea is to improve and clarify some points and also to give additional information which is hard to find in the standard literature on atomic/molecular representations. By this reason we give some proofs and comments in a separate part (see Section~\ref{sec:proofs}).

Another of the main results (see Section~\ref{sec:converge-in-space}) asserts that in most cases the convergence implied in the atomic/molecular decompositions holds in the spaces themselves (see Theorem~\ref{thm:conv-space}). We show that this remarkable effect, not usually referred in the literature (but see \cite{Cae11} for an exception in the framework of classical spaces), may have interesting consequences (for example, to immediately get the denseness of the Schwartz class in the spaces $B^w_{\px,\qx}$ and $F^w_{\px,\qx}$ when $p$ and $q$ are bounded). Sobolev type embeddings are also established as application of the atomic decompositions obtained (see Section~\ref{sec:Sobolev-embed}).

In Sections~\ref{sec:prelim} and \ref{sec:microlocalspaces} we review some background material and derive some preliminary results needed in the sequel.

\section{Preliminaries}\label{sec:prelim}

As usual, we denote by $\mathbb{R}^{n}$ the $n$-dimensional real
Euclidean space, by $ \N$ the collection of all natural numbers and
$\N_{0}= \N\cup \{0\}$. By $\Zn$ we denote the lattice of all points in $\Rn$ with integer components. If $r$ is a real number then $r_+:=\max\{r,0\}$.
We write $B(x,r)$ for the open ball in
$\mathbb{R}^{n}$ centered at $x\in \mathbb{R}^{n}$ with radius $r>0$.
We use $c$ as a generic positive constant, i.e.\ a constant whose
value may change with each appearance. The expression $f
\lesssim g$ means that $f\leq c\,g$ for some independent constant
$c$, and $f\approx g$ means $f \lesssim g \lesssim f$.

The notation $X\hookrightarrow Y$ stands for continuous embeddings
from $X$ into $Y$, where $X$ and $Y$ are quasi-normed spaces. If
$E\subset {\mathbb{R}^{n}}$ is a  measurable set, then $|E|$ stands
for its (Lebesgue) measure and $\chi_{E}$ denotes its characteristic
function. By $\supp f$ we denote the support of the function $f$.

The set $\cS$ denotes the usual Schwartz class of infinitely differentiable
rapidly decreasing complex-valued functions and $\cS'$
denotes the dual space of tempered distributions. The Fourier
transform of a tempered distribution $f$ is denoted by $\hat f$ while its inverse transform is denoted by $f^\vee$.

\subsection{Variable exponents}

By $\PP$ we denote the set of all measurable functions $p:\Rn
\rightarrow (0,\infty]$ (called \textit{variable exponents}) which
are essentially bounded away from zero.  For $E\subset \Rn$
and $p\in \PP$ we denote $p_E^+ =\esssup_E p(x)$ and
$p_E^-=\essinf_E p(x)$. For simplicity we use the abbreviations $p^+=p_\Rn^+$ and
$p^-=p_\Rn^-$.

The \emph{variable exponent Lebesgue space} $L_\px=L_{\px}(\Rn)$ is the
class of all (complex or extended real-valued) measurable functions $f$ (on $\Rn$) such that
\[
\varrho_{\px}(f/\lambda):=\int_\Rn \phi_{p(x)}\left(\frac{|f(x)|}{\lambda}\right)\, dx
\]
is finite for some $\lambda>0$, where
\[
\phi_p(t) :=
\begin{cases}
t^p & \text{ if } p\in (0,\infty), \\
0 & \text{ if } p=\infty \text{ and } t\in [0,1], \\
\infty & \text{ if } p=\infty \text{ and } t\in(1,\infty]. \\
\end{cases}
\]

It is known that $\varrho_\px$ defines a semimodular (on the vector space consisting of all measurable functions on $\Rn$ which are finite a.e.), and that $L_{\px}$ becomes a quasi-Banach space with respect to the quasi-norm
\begin{align*}
    \| f|L_\px\| &:= \inf \left\{ \lambda>0 : \varrho_{\px}\left(f/\lambda\right) \leq 1\right\}.
\end{align*}
This functional defines a norm when $p^-\geq 1$.  If $p(x)\equiv p$ is constant, then $L_{\px}=L_{p}$ is the classical Lebesgue space.

It is worth noting that $L_{\px}$ has the lattice property and that the assertions $f\in L_\px$ and $\|f\,|\, L_\px\|<\infty$ are equivalent for any (complex or extended real-valued) measurable function $f$ (assuming the usual convention $\inf \varnothing=\infty$). The fundamental properties of the spaces $L_\px$, at least in the case $p^-\geq 1$, can be found in \cite{KR91} and in the recent monographs \cite{DHHR11}, \cite{C-UF13}. The definition above of $L_{\px}$ using the semimodular $\varrho_\px$ is taken from \cite{DHHR11}.

For any $p\in\PP$ we have
\[
\| f\,|\,L_\px\|^r = \left\| |f|^r | L_{\frac{\px}{r}}\right\|\,, \ \ \ \ r\in(0,\infty),
\]
and
\[
\| f+g\,|\,L_\px\| \leq \max\left\{1,2^{\frac{1}{p^-}-1}\right\} \left( \| f\,|\,L_\px\| + \| g\,|\,L_\px\|\right).
\]

A useful property (that we shall call the \textit{unit ball
property}) is that $\rho_\px(f) \leq 1$
if and only if $\| f\,|L_\px\| \leq 1$ (\cite[Lemma~3.2.4]{DHHR11}). An interesting variant of this is the following estimate
\begin{equation}\label{Lp-norm-mod}
\min \left\{ \varrho_{\px} (f)^{\frac{1}{p^-}}, \varrho_{\px} (f)^{\frac{1}{p^+}} \right\} \le \|f\,|\, L_\px\| \le \max \left\{ \varrho_{\px} (f)^{\frac{1}{p^-}}, \varrho_{\px} (f)^{\frac{1}{p^+}} \right\}
\end{equation}
for $p\in\PP$ with $p^-<\infty$, and $\varrho_{\px} (f)>0$ or $p^+<\infty$. It is proved in \cite[Lemma~3.2.5]{DHHR11} for the case $p^-\ge 1$, but it is not hard to check that this property remains valid in the case $p^- < 1$. This property is clear for constant exponents due to the
obvious relation between the quasi-norm and the semimodular in that case.

For variable exponents, H\"older's inequality holds in the form
\[
\| f\,g\,|L_1\| \leq 2\,\| f\,|L_\px\|  \| g\,|L_{p'(\cdot)}\|
\]
for $p\in\PP$ with $p^-\ge 1$, where $p'$ denotes the conjugate exponent of $p$ defined pointwisely by $\tfrac{1}{p(x)}+\tfrac{1}{p'(x)}=1, \ \ x\in\Rn$.

From the spaces $L_\px$ we can also define \emph{variable exponent Sobolev spaces} $W^{k,\px}$ in the usual way (see \cite{DHHR11}, \cite{C-UF13} and the references therein).

In general we need to assume some regularity on the exponents in order to develop a consistent theory of variable function spaces. We recall here some classes which are nowadays standard in this setting.

We say that a continuous function $g\,:\, \Rn\to \R$ is {\em locally $\log$-H\"{o}lder
continuous}, abbreviated $g \in C^{\log}_\loc$, if there exists $c_{\log}>0$ such that
  \begin{align*}
    |g(x)-g(y)| \leq \frac{c_{\log}}{\log (e + 1/|x-y|)}
  \end{align*}
for all $x,y\in\Rn$. The function $g$ is said to be {\em globally $\log$-H\"{o}lder continuous},
abbreviated $g \in C^{\log}$, if it is locally $\log$-H\"{o}lder
continuous and there exist $g_\infty \in \R$ and $C_{\log}>0$ such that
\begin{align*}
|g(x) - g_\infty| &\leq \frac{C_{\log}}{\log(e
+ |x|)}
\end{align*}
for all $x \in \R^n$. The notation $\PPlog$ is used for those variable exponents $p\in \PP$ with $\frac1p \in C^{\log}$. We shall write $c_{\log}(g)$ when we need to use explicitly the constant involved in the local $\log$-Hölder continuity of $g$. Note that all (exponent) functions in $C^{\log}_{\loc}$ are bounded.

As regards the (quasi)norm of characteristic functions on cubes $Q$ (or balls) in $\Rn$, for exponents $p\in\PPlog$ we have
\begin{equation}\label{norm-charact-func-cubes}
\| \chi_Q\,| L_\px\| \approx |Q|^{\frac{1}{p(x)}} \ \ \ \ \textrm{if} \ \ \ |Q| \le 1 \ \ \ \textrm{and} \ \ x\in Q,
\end{equation}
and
\begin{equation*}
\| \chi_Q\,| L_\px\| \approx |Q|^{\frac{1}{p_\infty}} \ \ \ \ \textrm{if} \ \ |Q| \ge 1
\end{equation*}
(see \cite[Corollary~4.5.9]{DHHR11}).

\subsection{Variable exponent mixed sequence spaces}

To deal with variable exponent Besov and Triebel--Lizorkin scales we need to consider appropriate mixed sequences spaces. For $p,q\in\PP$ the \textit{mixed Lebesgue-sequence space} $L_\px(\ell_\qx)$ (\cite{DieHR09}) can be easily defined through the quasi-norm
\begin{equation} \label{def:lpq}
\|(f_\nu)_\nu\,| L_\px(\ell_\qx) \| := \big\|
\|(f_\nu(x))_\nu\,| \ell_{q(x)}\|\,| L_\px\big\|
\end{equation}
on sequences $(f_\nu)_{\nu\in\Nz}$ of complex or extended real-valued measurable functions on $\Rn$. This is a norm if $\min\{p^-,q^-\} \geq 1$.
Note that $\ell_{q(x)}$ is a standard discrete Lebesgue space (for each $x\in\Rn$), and that \eqref{def:lpq} is well defined since $q(x)$ does not depend on $\nu$ and the function $x\to \|(f_\nu(x))_\nu\,| \ell_{q(x)}\|$ is always measurable when $q\in\PP$.

The ``opposite'' case $\ell_\qx(L_\px)$ is not so easy to handle. For $p,q\in\PP$, the \textit{mixed sequence-Lebesgue space} $\ell_\qx(L_\px)$ consists of all sequences $(f_\nu)_{\nu\in\Nz}$ of (complex or extended real-valued) measurable functions (on $\Rn$) such that $\varrho_{\ell_\qx(L_\px)}\big( \tfrac{1}{\mu}(f_\nu)_\nu\big ) < \infty$ for some $\mu>0$, where
\begin{equation}\label{def:lpqmod}
\varrho_{\ell_\qx(L_\px)}\big( (f_\nu)_\nu\big ) := \sum_{\nu\ge 0}
\inf\Big\{\lambda_\nu>0: \, \varrho_{\px}\Big(f_\nu
/\lambda_\nu^{\frac1{\qx}}\Big)\le 1 \Big\}.
\end{equation}
Note that if $q^+<\infty$ then \eqref{def:lpqmod} equals the more simple form
\begin{equation}\label{def:lpqmodsimple}
\varrho_{\ell_\qx(L_\px)}\big( (f_\nu)_\nu\big ) = \sum_{\nu\ge 0} \Big\|
|f_\nu|^{\qx} | L_{\frac{\px}{\qx}}\Big\|.
\end{equation}
The space $\ell_\qx(L_\px)$ was introduced in \cite[Definition~3.1]{AlmH10} within the framework of the so-called \emph{semimodular spaces}.  It is known (\cite{AlmH10}) that
\begin{equation*}\label{def:lpqnorm}
  \|(f_\nu)_\nu\,| \ell_\qx(L_\px)\| :=
  \inf\Big\{ \mu>0:\, \varrho_{\ell_\qx(L_\px)}\big( \tfrac1\mu  (f_\nu)_\nu\big ) \le 1\Big\}
\end{equation*}
defines a quasi-norm in $\ell_\qx(L_\px)$ for every $p,q\in\PP$ and that $\|\cdot\,| \ell_\qx(L_\px)\|$ is a
norm when $q\geq 1$ is constant and $p^-\geq 1$, or when
$\frac{1}{p(x)}+\frac{1}{q(x)} \leq 1$ for all $x\in\Rn$. More recently, it was
shown in \cite{KemV13} that it also becomes a norm if $1\leq q(x)\leq p(x)\leq \infty$. Contrarily to the situation when $q$ is constant, the expression above is not necessarily a norm when $\min\{p^-,q^-\} \geq 1$ (see \cite{KemV13} for an example showing that the triangle inequality may fail in this case).

It is worth noting that $\ell_\qx(L_\px)$ is a really iterated space when $q\in(0,\infty]$ is constant (\cite[Proposition~3.3]{AlmH10}), with
\begin{equation}\label{iterated}
\|(f_\nu)_\nu\,| \ell_q(L_\px)\| = \big\| \big(\| f_\nu\,| L_\px\|\big)_\nu \,| \ell_q\big\|.
\end{equation}
We note also that the values of $q$ have no influence on $\|(f_\nu)_\nu\,| \ell_\qx(L_\px)\|$ when we consider sequences having just one non-zero entry. In fact, as in the constant exponent case, we have
$\|(f_\nu)_\nu\,|\, \ell_\qx(L_\px)\| = \|f\,|\, L_\px\|$ whenever there exists $\nu_0\in\Nz$ such that $f_{\nu_0}=f$ and $f_\nu \equiv 0$ for all $\nu \not= \nu_0$ (cf. \cite[Example~3.4]{AlmH10}).

Simple calculations show that given any sequence $(f_\nu)_\nu$ of measurable functions, finiteness of $\|(f_\nu)_\nu\,| \ell_\qx(L_\px)\|$ implies $(f_\nu)_\nu\in\ell_\qx(L_\px)$, which in turn implies $f_\nu\in L_\px$ for each $\nu\in\Nz$. Moreover,
$$\|(f_\nu)_\nu\,| \ell_\qx(L_\px)\| \le 1 \ \ \ \text{if and only if} \ \ \ \varrho_{\ell_\qx(L_\px)}\big( (f_\nu)_\nu\big ) \le 1 \ \ \ \ \ \text{(unit ball property)}$$
(see \cite{AlmH10}). From the latter, we can derive the following inequality on the estimation of the quasi-norm by the semimodular:

\begin{lemma}\label{lem:lqLp-norm-mod}
Let $p,q\in\PP$ with $q^-<\infty$. If $\varrho_{\ell_\qx(L_\px)} \big((f_\nu)_\nu\big)>0$ or $q^+<\infty$, then
\begin{equation*}
\|(f_\nu)_\nu\,|\, \ell_\qx(L_\px)\| \le \max \left\{ \varrho_{\ell_\qx(L_\px)} \big((f_\nu)_\nu\big)^{\frac{1}{q^-}}, \varrho_{\ell_\qx(L_\px)} \big((f_\nu)_\nu\big)^{\frac{1}{q^+}} \right\}.
\end{equation*}
\end{lemma}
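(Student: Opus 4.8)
The plan is to reduce the estimate to the unit ball property recalled just above — that $\|(g_\nu)_\nu\,|\,\ell_\qx(L_\px)\|\le1$ if and only if $\varrho_{\ell_\qx(L_\px)}\big((g_\nu)_\nu\big)\le1$ — in the same spirit as the argument for \eqref{Lp-norm-mod}. Write $\varrho:=\varrho_{\ell_\qx(L_\px)}\big((f_\nu)_\nu\big)$ and $\mu:=\max\big\{\varrho^{1/q^-},\varrho^{1/q^+}\big\}$ (with $\varrho^{1/q^+}$ read as $\varrho^{0}=1$ when $q^+=\infty$). If $\varrho=\infty$ then $\mu=\infty$ and there is nothing to prove; if $\varrho=0$ then the hypothesis forces $q^+<\infty$, so by \eqref{def:lpqmodsimple} each $f_\nu$ vanishes a.e.\ and both sides equal $0$. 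Hence we may assume $0<\varrho<\infty$, and it suffices to show $\varrho_{\ell_\qx(L_\px)}\big(\tfrac{1}{\mu}(f_\nu)_\nu\big)\le1$, since this already gives $\|(f_\nu)_\nu\,|\,\ell_\qx(L_\px)\|\le\mu$.

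The crucial point is a scaling estimate for the inner functional
\[
b(g):=\inf\big\{\lambda>0:\ \varrho_{\px}\big(g/\lambda^{1/\qx}\big)\le1\big\},
\]
in terms of which $\varrho_{\ell_\qx(L_\px)}\big((g_\nu)_\nu\big)=\sum_{\nu\ge0}b(g_\nu)$ by \eqref{def:lpqmod}. I would first observe that $\lambda\mapsto\varrho_{\px}\big(g/\lambda^{1/\qx}\big)$ is non-increasing — since $\lambda\mapsto\lambda^{1/q(x)}$ is non-decreasing and each $\phi_{p(x)}$ is non-decreasing — so that $\varrho_{\px}\big(g/\lambda^{1/\qx}\big)\le1$ whenever $\lambda>b(g)$. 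Then, for a constant $c\ge1$, the elementary inequality $c\ge c^{q^-/q(x)}$ (which also holds where $q(x)=\infty$, the exponent then being $0$) gives the pointwise bound $c\,\lambda^{1/q(x)}\ge(c^{q^-}\lambda)^{1/q(x)}$, and therefore, by monotonicity of $\phi_{p(x)}$,
\[
\varrho_{\px}\big((g/c)/\lambda^{1/\qx}\big)\le\varrho_{\px}\big(g/(c^{q^-}\lambda)^{1/\qx}\big),
\]
the latter being $\le1$ as soon as $c^{q^-}\lambda>b(g)$; taking the infimum over such $\lambda$ we obtain
\[
b(g/c)\le c^{-q^-}\,b(g)\qquad(c\ge1).
\]
Symmetrically, when $q^+<\infty$ (so that $q(x)<\infty$ a.e.) and $0<c\le1$, the inequality $c\ge c^{q^+/q(x)}$ yields $c\,\lambda^{1/q(x)}\ge(c^{q^+}\lambda)^{1/q(x)}$ and hence, by the same reasoning, $b(g/c)\le c^{-q^+}\,b(g)$.

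Summing over $\nu$ then completes the argument. If $\varrho\ge1$ then $\mu=\varrho^{1/q^-}\ge1$, so the first scaling estimate gives
\[
\varrho_{\ell_\qx(L_\px)}\big(\tfrac{1}{\mu}(f_\nu)_\nu\big)=\sum_{\nu\ge0}b(f_\nu/\mu)\le\mu^{-q^-}\sum_{\nu\ge0}b(f_\nu)=\mu^{-q^-}\varrho=1.
\]
If $0<\varrho<1$ and $q^+<\infty$ then $\mu=\varrho^{1/q^+}\le1$, and the second estimate gives similarly $\varrho_{\ell_\qx(L_\px)}\big(\tfrac{1}{\mu}(f_\nu)_\nu\big)\le\mu^{-q^+}\varrho=1$; and if $0<\varrho<1$ with $q^+=\infty$ then $\mu=1$ and the bound is exactly the unit ball property. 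In each case the unit ball property yields $\|(f_\nu)_\nu\,|\,\ell_\qx(L_\px)\|\le\mu$, which is the claim.

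The step I expect to be the main obstacle is the scaling estimate for $b$: in the constant-exponent case one has the exact identity $b(g/c)=c^{-q}b(g)$, whereas for variable $q$ the scaling factor $c^{q(x)}$ is no longer constant, and one must insert the two-sided bound $q^-\le q(x)\le q^+$ at the right moment and with the correct direction of inequality. This is precisely what forces the case distinction according to whether $c\ge1$ or $c<1$ (equivalently $\varrho\ge1$ or $\varrho<1$). A minor additional care is needed with points where $q(x)=\infty$ — harmless for $c\ge1$, and absent for $0<c\le1$ since in that branch $q^+<\infty$ — and with the degenerate values $\varrho\in\{0,\infty\}$, all of which are covered by the conventions above together with \eqref{def:lpqmodsimple}.
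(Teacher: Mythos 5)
Your proof is correct, and it is essentially the argument the paper has in mind: the lemma is stated there as a consequence of the unit ball property, which is exactly what you implement via the scaling estimate $b(g/c)\le c^{-q^-}b(g)$ for $c\ge1$ (resp.\ $b(g/c)\le c^{-q^+}b(g)$ for $c\le1$, $q^+<\infty$) applied termwise to the semimodular \eqref{def:lpqmod}. The case distinctions ($\varrho\ge1$ versus $\varrho<1$, the convention at $q(x)=\infty$, and the degenerate values of $\varrho$) are all handled correctly.
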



The next result will be very useful below when checking the convergence implied in the atomic representations.

\begin{lemma}\label{lem:lpqconverge}
Let $p,q\in\PP$ with $q^+<\infty$. If $(f_\nu)_{\nu\in\Nz}\in\ell_\qx(L_\px)$, then
$$\left\|(f_\nu)_{\nu\ge T}\,|\ell_\qx(L_\px)\right\| \rightarrow 0 \ \  \ \ \text{as} \ \ T\to \infty.$$
\end{lemma}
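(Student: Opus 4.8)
The plan is to reduce the statement to the elementary fact that the tail of a convergent series of nonnegative numbers tends to zero, and then to transfer this from the semimodular to the quasi-norm via Lemma~\ref{lem:lqLp-norm-mod}.

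First I would normalize by homogeneity. Since $(f_\nu)_\nu\in\ell_\qx(L_\px)$, there is some $\mu>0$ with $\varrho_{\ell_\qx(L_\px)}\big(\tfrac1\mu(f_\nu)_\nu\big)<\infty$. Writing $g_\nu:=f_\nu/\mu$ and using the hypothesis $q^+<\infty$ to pass to the simpler form \eqref{def:lpqmodsimple} of the semimodular, we get
\[
\varrho_{\ell_\qx(L_\px)}\big((g_\nu)_\nu\big)=\sum_{\nu\ge 0}\big\||g_\nu|^{\qx}\,\big|\,L_{\frac{\px}{\qx}}\big\|<\infty .
\]
This is a convergent series with nonnegative terms, so its tails vanish, i.e.
\[
\varrho_{\ell_\qx(L_\px)}\big((g_\nu)_{\nu\ge T}\big)=\sum_{\nu\ge T}\big\||g_\nu|^{\qx}\,\big|\,L_{\frac{\px}{\qx}}\big\|\longrightarrow 0\qquad\text{as }T\to\infty,
\]
where on the left the sum defining the semimodular is simply taken over the surviving indices.

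Next I would apply Lemma~\ref{lem:lqLp-norm-mod} — available precisely because $q^+<\infty$ — to each tail sequence $(g_\nu)_{\nu\ge T}$, obtaining
\[
\big\|(g_\nu)_{\nu\ge T}\,\big|\,\ell_\qx(L_\px)\big\|\le\max\Big\{\varrho_{\ell_\qx(L_\px)}\big((g_\nu)_{\nu\ge T}\big)^{1/q^-},\ \varrho_{\ell_\qx(L_\px)}\big((g_\nu)_{\nu\ge T}\big)^{1/q^+}\Big\}.
\]
Since the right-hand side tends to $0$ as $T\to\infty$ by the previous step, so does the left-hand side; finally, by positive homogeneity of the quasi-norm, $\big\|(f_\nu)_{\nu\ge T}\,\big|\,\ell_\qx(L_\px)\big\|=\mu\,\big\|(g_\nu)_{\nu\ge T}\,\big|\,\ell_\qx(L_\px)\big\|\to 0$, which is the assertion.

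There is no genuine obstacle here; the only points needing care are that the hypothesis $q^+<\infty$ is what legitimizes both the passage to the simpler expression \eqref{def:lpqmodsimple} (so that the tail of $\varrho_{\ell_\qx(L_\px)}$ is literally the tail of a numerical series) and the applicability of Lemma~\ref{lem:lqLp-norm-mod}, and that the initial membership $(f_\nu)_\nu\in\ell_\qx(L_\px)$ must be used in its semimodular formulation rather than through finiteness of the quasi-norm.
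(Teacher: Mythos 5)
Your proof is correct and follows essentially the same route as the paper: pass to the simple form \eqref{def:lpqmodsimple} of the semimodular (legitimate since $q^+<\infty$), observe that the tail of the convergent numerical series vanishes, and transfer this to the quasi-norm via Lemma~\ref{lem:lqLp-norm-mod}. The only cosmetic difference is that you keep the normalized sequence $g_\nu=f_\nu/\mu$ and invoke homogeneity at the end, whereas the paper uses the boundedness of $q$ to pull the factor $\mu_0$ out of the semimodular and works with $(f_\nu)$ directly; both are equally valid.
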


\begin{proof}
From the previous lemma, the claim follows if we show that $\varrho_{\ell_\qx(L_\px)}\big( (f_\nu)_{\nu\ge T} \big ) \rightarrow 0$ as $T\to \infty$. Since $(f_\nu)_{\nu\in\Nz}\in\ell_\qx(L_\px)$, by definition there exists $\mu_0>0$ such that $\varrho_{\ell_\qx(L_\px)}\big( \mu_0 (f_\nu)_{\nu\in\Nz} \big )<\infty$. We have
\begin{equation*}
\varrho_{\ell_\qx(L_\px)}\big( (f_\nu)_{\nu\in\Nz} \big)  = \sum_{\nu =0}^\infty \Big\|
|f_\nu|^{\qx} | L_{\frac{\px}{\qx}}\Big\|
 \le  \max\big\{\big(\tfrac{1}{\mu_0}\big)^{q^+}, \big(\tfrac{1}{\mu_0}\big)^{q^-}\big\} \sum_{\nu =0}^\infty \Big\|
|\mu_0\,f_\nu|^{\qx} | L_{\frac{\px}{\qx}}\Big\|
\end{equation*}
using the fact that $q$ is bounded, and therefore the semimodular $\varrho_{\ell_\qx(L_\px)}$ takes the more simple form \eqref{def:lpqmodsimple} in that case. Since the right-hand side of this inequality is finite, we conclude that the numerical series
$$ \sum_{\nu =0}^\infty \Big\|
|f_\nu|^{\qx} | L_{\frac{\px}{\qx}}\Big\| $$
converges. Consequently
$$ \varrho_{\ell_\qx(L_\px)}\big( (f_\nu)_{\nu\ge T} \big) = \sum_{\nu =T}^\infty \Big\|
|f_\nu|^{\qx} | L_{\frac{\px}{\qx}}\Big\| \rightarrow 0 \ \ \ \text{as} \ \ T\to\infty. $$
\end{proof}

Both mixed sequence spaces $L_\px(\ell_\qx)$ and $\ell_\qx(L_\px)$ also satisfy the lattice property. Some basic embeddings involving these spaces are the following (cf. \cite[Lemma~2.1]{AlmC15a} and \cite[Theorem~6.1]{AlmH10}):

\begin{lemma}\label{lem:lpqembed}
Let $p,q,q_0,q_1\in\PP$. If $q_0\le q_1$ then we have
\begin{equation*}
L_\px(\ell_\qzx) \hookrightarrow L_\px(\ell_\qumx) \ \ \ \ \text{and} \ \ \ \ \ell_\qzx(L_\px) \hookrightarrow \ell_\qumx(L_\px).
\end{equation*}
Moreover, if $p^+,q^+<\infty$ then it also holds
\begin{equation*}
\ell_{\min\{\px,\qx\}}(L_\px)   \hookrightarrow L_\px(\ell_\qx) \hookrightarrow \ell_{\max\{\px,\qx\}}(L_\px).
\end{equation*}
\end{lemma}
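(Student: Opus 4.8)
The plan is to prove the three embeddings separately, in each case reducing to the scalar situation via the unit ball property and the identity $\||f|^r\,|\,L_{\px/r}\| = \|f\,|\,L_\px\|^r$ stated in the Preliminaries. For the first embedding $L_\px(\ell_\qzx)\hookrightarrow L_\px(\ell_\qumx)$, the point is the pointwise inclusion of discrete spaces: for each fixed $x$, $q_0(x)\le q_1(x)$ gives $\|(a_\nu)_\nu\,|\,\ell_{q_1(x)}\|\le\|(a_\nu)_\nu\,|\,\ell_{q_0(x)}\|$ with constant $1$; applying this with $a_\nu=f_\nu(x)$ and then using the lattice property of $L_\px$ to take $L_\px$-quasinorms yields the claim directly, with embedding constant $1$. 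For the second embedding $\ell_\qzx(L_\px)\hookrightarrow\ell_\qumx(L_\px)$, I would argue at the level of semimodulars: it suffices to show $\varrho_{\ell_\qumx(L_\px)}\big((f_\nu)_\nu\big)\le\varrho_{\ell_\qzx(L_\px)}\big((f_\nu)_\nu\big)$ whenever the right-hand side is $\le 1$, and then invoke the unit ball property for $\ell_\qx(L_\px)$ to conclude $\|\cdot\,|\,\ell_\qumx(L_\px)\|\le\|\cdot\,|\,\ell_\qzx(L_\px)\|$. Comparing the two sums term by term, for each $\nu$ one checks that $\inf\{\lambda:\varrho_\px(f_\nu/\lambda^{1/q_1(x)})\le 1\}\le\inf\{\lambda:\varrho_\px(f_\nu/\lambda^{1/q_0(x)})\le 1\}$ when the latter infimum is $\le 1$; this is where $q_0\le q_1$ enters, since for $\lambda\le 1$ one has $\lambda^{1/q_1(x)}\ge\lambda^{1/q_0(x)}$ pointwise, hence $\varrho_\px(f_\nu/\lambda^{1/q_1(x)})\le\varrho_\px(f_\nu/\lambda^{1/q_0(x)})$ by monotonicity of the modular. (Alternatively, when $q^+<\infty$ one can use the simpler form \eqref{def:lpqmodsimple}, but the semimodular argument covers the unbounded case uniformly; one should also note that $\||f_\nu|^{q_1(x)}\,|\,L_{\px/q_1(x)}\|$ versus $\||f_\nu|^{q_0(x)}\,|\,L_{\px/q_0(x)}\|$ can be compared via the unit ball property in $L_\px$ applied to $|f_\nu|/\|f_\nu\,|\,L_\px\|$.)

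For the third chain $\ell_{\min\{\px,\qx\}}(L_\px)\hookrightarrow L_\px(\ell_\qx)\hookrightarrow\ell_{\max\{\px,\qx\}}(L_\px)$ under the hypothesis $p^+,q^+<\infty$, I would again work with semimodulars and the unit ball property, and it is enough to treat the two inclusions where one of the two exponents is the pointwise minimum or maximum. Consider first the right-hand inclusion with $r(x):=\max\{p(x),q(x)\}$: one wants $\varrho_{\ell_\rx(L_\px)}\big((f_\nu)_\nu\big)\lesssim\|(f_\nu)_\nu\,|\,L_\px(\ell_\qx)\|^{\text{(something)}}$, and after the usual normalization it reduces to the pointwise/functional inequality comparing $\sum_\nu\big\||f_\nu|^{r(x)}\,|\,L_{\px/r(x)}\big\|$ with $\big\|\,\|(f_\nu(x))_\nu\,|\,\ell_{q(x)}\|\,\big|\,L_\px\big\|$. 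The key analytic fact is the elementary pointwise embedding $\ell_{q(x)}\hookrightarrow\ell_{r(x)}$ (constant $1$, since $r(x)\ge q(x)$) together with the observation that raising to a power $\ge q(x)$ and integrating interacts well when $r(x)\ge p(x)$; concretely I expect to use that $t\mapsto t^{r(x)/p(x)}$ or $t\mapsto t^{p(x)/r(x)}$ is convex/concave as appropriate, combined with the normalization so that the relevant $L_\px$-norm equals $1$, reducing matters to $\ell^{q}\hookrightarrow\ell^{r}\hookrightarrow\ell^{p}$ or $\ell^{p}\hookrightarrow\ell^{q}$ at each point $x$ and then integrating. The left-hand inclusion with $m(x):=\min\{p(x),q(x)\}$ is dual in spirit: now $\ell_{m(x)}\hookrightarrow\ell_{q(x)}$ pointwise and one needs the opposite interaction with the $L_\px$ integration, which again is the triangle/concavity inequality for sums of $L_\px$-quasinorms when the outer power is the smaller of $p(x),q(x)$.

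The main obstacle, as usual in this variable-exponent setting, is that none of these estimates can be pushed through ``pointwise in $x$ then integrate'' in a naive way, because the $\ell_\qx(L_\px)$ (quasi-)norm is not an iterated norm when $q$ is genuinely variable; the inner $\lambda_\nu$-infima in \eqref{def:lpqmod} couple the $\nu$'s and the $x$'s. The correct device is to normalize the whole sequence so that the outer (quasi-)norm equals $1$, use the unit ball property to turn the norm inequality into a semimodular inequality, and only then perform the pointwise comparison of the discrete $\ell$-norms; the boundedness assumptions $p^+,q^+<\infty$ in the third part are exactly what makes \eqref{def:lpqmodsimple} available and lets one convert between the modular $\sum_\nu\||f_\nu|^{q(x)}\,|\,L_{\px/q(x)}\|$ and the mixed norm. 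One must also keep track of the quasi-norm constants (the factor $\max\{1,2^{1/p^--1}\}$ and the powers $1/q^\pm$ from \eqref{Lp-norm-mod} and Lemma~\ref{lem:lqLp-norm-mod}), but these are harmless for a continuous embedding. Since the statement is quoted from \cite[Lemma~2.1]{AlmC15a} and \cite[Theorem~6.1]{AlmH10}, I would in fact just cite those references and only indicate the normalization-plus-unit-ball-property mechanism, rather than reproduce the full computation here.
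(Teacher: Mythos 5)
The paper offers no proof of this lemma at all: it is quoted directly from \cite[Lemma~2.1]{AlmC15a} and \cite[Theorem~6.1]{AlmH10}, which is precisely what you propose to do in the end, and your accompanying sketch (pointwise nesting of the $\ell_{q(x)}$ spaces plus the lattice property of $L_\px$ for the first embedding, and the normalization/unit-ball-property reduction to a semimodular comparison for the $\ell_\qx(L_\px)$ embeddings and the min/max chain) is the same standard mechanism used in those references. So your proposal is correct and matches the paper's treatment.
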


Since the maximal operator does not behave well on the mixed spaces $L_\px(\ell_\qx)$ and $\ell_\qx(L_\px)$ (cf. \cite{AlmH10,DieHR09}), a key tool in this framework are the convolution inequalities below involving the functions
\[
\eta_{\nu,R}(x) := \frac{2^{n\nu}}{(1+2^\nu|x|)^R}\,, \ \ \ \nu\in \Nz, \ \ R>0.
\]
\begin{lemma}\label{lem:conv-eta}
Let $p,q\in\PPlog$ and $(f_\nu)_\nu$ be a sequence of non-negative measurable functions on $\Rn$.
\begin{enumerate}
\item[(i)] If $1<p^-\leq p^+ <\infty$ and $1<q^-\leq q^+ <\infty$, then for $R>n$ there holds
\[
\| (\eta_{\nu,R} * f_\nu)_\nu\,| L_\px(\ell_\qx) \| \lesssim \|
(f_\nu)_\nu \,| L_\px(\ell_\qx) \|.
\]
\item[(ii)] If $p^-\ge 1$ and $R>n+c_{\log}(1/q)$, then
\[
\| (\eta_{\nu,R} * f_\nu)_\nu\,| \ell_\qx(L_\px) \| \lesssim \|
(f_\nu)_\nu \,| \ell_\qx(L_\px) \|.
\]
\end{enumerate}
\end{lemma}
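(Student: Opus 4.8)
The plan is to handle the two parts by different mechanisms, reflecting the fact that the Hardy--Littlewood maximal operator $M$ is available on $L_\px(\ell_\qx)$ under the hypotheses of (i) but not on $\ell_\qx(L_\px)$.

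For part (i) I would first observe that $\eta_{\nu,R}$ is the $2^\nu$-dilation of the fixed kernel $\eta_{0,R}(z)=(1+|z|)^{-R}$, which is nonnegative, radially decreasing and \emph{integrable} precisely because $R>n$, with $\|\eta_{0,R}|L_1\|$ independent of $\nu$. By the classical pointwise estimate for convolutions against such kernels one gets $(\eta_{\nu,R}*f_\nu)(x)\lesssim (Mf_\nu)(x)$ with a constant independent of $\nu$ and $x$. It then only remains to invoke the vector-valued (Fefferman--Stein type) maximal inequality $\|(Mf_\nu)_\nu\,|\,L_\px(\ell_\qx)\|\lesssim\|(f_\nu)_\nu\,|\,L_\px(\ell_\qx)\|$, which holds for $p,q\in\PPlog$ with $1<p^-\le p^+<\infty$ and $1<q^-\le q^+<\infty$ (see, e.g., \cite{DieHR09}). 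There is essentially no difficulty here beyond quoting these two facts.

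For part (ii) the maximal operator is unavailable, so the estimate must be produced by hand at the level of semimodulars. Using the unit ball property for $\ell_\qx(L_\px)$ and the linearity of convolution, I would reduce the statement to exhibiting a constant $C\ge1$ such that
\[
\varrho_{\ell_\qx(L_\px)}\big(\tfrac1C(\eta_{\nu,R}*g_\nu)_\nu\big)\le1\quad\text{whenever}\quad\varrho_{\ell_\qx(L_\px)}\big((g_\nu)_\nu\big)\le1 .
\]
To control a single $\eta_{\nu,R}*g_\nu$ I would split the kernel into dyadic annuli, obtaining for every $x$ the pointwise bound
\[
(\eta_{\nu,R}*g_\nu)(x)\ \lesssim\ \sum_{k\ge0}2^{-k(R-n)}\,\frac{1}{|B(x,2^{k-\nu})|}\int_{B(x,2^{k-\nu})}|g_\nu(y)|\,dy ,
\]
so that $\eta_{\nu,R}*g_\nu$ is dominated by a geometric sum of averages of $|g_\nu|$ over balls of radii $2^{k-\nu}$. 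Raising to the (variable) power $q(x)$ --- splitting into the cases $q(x)\le1$ and $q(x)>1$, using subadditivity respectively Hölder's inequality to extract the summable geometric weights --- and then passing from the quasi-norm to the semimodular by means of Lemma~\ref{lem:lqLp-norm-mod} and \eqref{def:lpqmodsimple}, one reduces matters to estimating, for each pair $k,\nu$, the $L_{\frac{\px}{\qx}}$-quantity associated with the ball average at scale $2^{k-\nu}$ in terms of the corresponding quantity for $g_\nu$ itself, followed by a summation over $\nu$.

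The heart of the matter --- and the main obstacle --- is precisely this last estimate, in which one has to compare the local exponent $q(x)$ with the values $q(y)$ for $y$ in a ball $B(x,2^{k-\nu})$ that may well be \emph{large}. Both halves of the $\log$-Hölder condition on $1/q$ are used here: the local part handles the comparison when $|x-y|<1$, and the decay-at-infinity part handles it when $|x-y|\ge1$ and $|x|$ is large. The outcome is that passing from the exponent $q(y)$ to $q(x)$ costs at most a multiplicative factor $\approx 2^{k\,c_{\log}(1/q)}$, up to additive correction terms decaying rapidly in $|x|$ and $|y|$ (absorbed using estimates of the type \eqref{norm-charact-func-cubes}); the precise hypothesis $R>n+c_{\log}(1/q)$ is exactly what makes $\sum_{k\ge0}2^{-k(R-n)}2^{k\,c_{\log}(1/q)}$ a convergent geometric series. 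Summing the resulting bounds first in $k$ and then in $\nu$ over the semimodular then closes the argument. The case $q^+=\infty$ --- where \eqref{def:lpqmodsimple} is not available and \eqref{def:lpqmod} must be used --- follows the same scheme but requires extra care with the defining infima over $\lambda_\nu$.
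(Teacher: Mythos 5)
The paper itself does not prove this lemma: it quotes part (i) from \cite[Theorem~3.2]{DieHR09} and part (ii) from \cite[Lemma~4.7]{AlmH10} and \cite[Lemma~10]{KemV12}, so any self-contained argument necessarily departs from the paper. The problem is that your argument for (i) rests on a tool that is not available: the Fefferman--Stein vector-valued maximal inequality in $L_\px(\ell_\qx)$ does \emph{not} hold for variable $q$ (it is known to fail whenever $q$ is non-constant, even for smooth exponents with $1<q^-\le q^+<\infty$), and \cite{DieHR09} contains no such result --- its Theorem~3.2 is exactly the convolution inequality (i), introduced there precisely as a \emph{substitute} for the missing maximal inequality. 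The present paper says as much immediately before the lemma: ``the maximal operator does not behave well on the mixed spaces $L_\px(\ell_\qx)$ and $\ell_\qx(L_\px)$ (cf.\ \cite{AlmH10,DieHR09})''. Your pointwise bound $(\eta_{\nu,R}*f_\nu)(x)\lesssim Mf_\nu(x)$ is of course correct, but passing to $M$ discards exactly the structure (the fixed decay scale $2^{-\nu}$ of the kernel) that makes the inequality true; a symptom of the gap is that your proof of (i) never uses the log-H\"older regularity of $q$, which is genuinely needed. The correct route in \cite{DieHR09} works directly with the kernel $\eta_{\nu,R}$, using the local log-H\"older continuity and the decay condition on $1/q$ --- in spirit much closer to what you sketch for (ii).

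Concerning (ii), your scheme (reduction to semimodulars via the unit ball property, dyadic decomposition of the kernel into ball averages at scales $2^{k-\nu}$, shifting $q(y)$ to $q(x)$ at a cost $\approx 2^{k\,c_{\log}(1/q)}$ plus rapidly decaying error terms, with $R>n+c_{\log}(1/q)$ guaranteeing summability) is indeed the strategy of \cite[Lemma~4.7]{AlmH10} and \cite[Lemma~10]{KemV12}. As written, however, it is a plan rather than a proof: the exponent-shift estimate, which is the actual content of those lemmas (and which only works because the semimodular of the input is normalized to be at most one, so that the quantities being raised to the powers $1/q(x)$, $1/q(y)$ are suitably bounded), is described but not carried out, and the case $q^+=\infty$, where \eqref{def:lpqmodsimple} is unavailable, is only mentioned. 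So part (ii) is a reasonable outline in need of substantial detail, while part (i) as proposed would fail and must be replaced by a direct kernel argument.
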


The convolution inequality in (i) above was given in \cite[Theorem~3.2]{DieHR09}, while the statement (ii) was established in \cite[Lemma~4.7]{AlmH10} and \cite[Lemma~10]{KemV12}.

\section{$2$-microlocal spaces with variable integrability}\label{sec:microlocalspaces}

Let $\alpha,\alpha_1,\alpha_2\in\R$ with $\alpha \ge 0$ and $\alpha_1 \le \alpha_2$.
We say that a sequence of positive measurable functions $\w = (w_j)_{j\in\Nz}$ belongs to class $\W$ if
\begin{enumerate}
\item[(i)] there exists $c>0$ such that
\begin{equation}\label{aux6}
0<w_j(x) \leq c\,w_j(y)\left(1+2^j|x-y|\right)^\alpha
\end{equation}
for all $j\in\Nz$ and $x,y\in\Rn$;
\item[(ii)] there holds
\[
2^{\alpha_1}\, w_j(x) \leq w_{j+1}(x) \leq 2^{\alpha_2}\, w_j(x)
\]
for all $j\in\Nz$ and $x\in\Rn$.
\end{enumerate}
Such a sequence will be called an \emph{admissible weight sequence}. When we write $\w\in\W$ without any restrictions it means that $\alpha \ge 0$ and $\alpha_1, \alpha_2\in\R$ (with $\alpha_1\leq \alpha_2$) are arbitrary but fixed numbers. We refer to \cite[Remark~2.4]{Kem08} for some useful properties of class $\W$, and to \cite[Examples~2.4--2.7]{AlmC15a} for a compilation of basic examples of admissible weight sequences.

We now recall the Fourier analytical approach to function spaces of Besov and Triebel-Lizorkin type. Let $(\varphi,\Phi)$ be a pair of functions in $\cS$ such that
\begin{itemize}
\item $\supp \hat{\varphi} \subset \{\xi\in\Rn: \, \tfrac{1}{2} \le |\xi| \le 2 \}\, $ and $\, |\hat{\varphi}(\xi)| \ge c>0$ when $\tfrac{3}{5} \le |\xi| \le \tfrac{5}{3}$;
\item $\supp \hat{\Phi} \subset \{\xi\in\Rn: \, |\xi| \le 2 \}\, $ and $\, |\hat{\Phi}(\xi)| \ge c>0$ when $ |\xi| \le \tfrac{5}{3}$.
\end{itemize}
Set $\varphi_0:=\Phi$ and $\varphi_j:=2^{jn}\varphi(2^j\cdot)$ for $j\in\N$. Then $\varphi_j\in\cS$ for all $j\in \Nz$ and
\[
\supp \widehat{\varphi}_j \subset \{\xi\in\Rn: \, 2^{j-1} \le |\xi| \le 2^{j+1} \} \,, \ \ \ j\in\N.
\]
Sometimes we call \emph{admissible} to a system $\{\varphi_j\}$ constructed in this way.

Given such a system, we define $2$-microlocal Besov and Triebel-Lizorkin spaces with variable integrability in the following way:

\begin{definition} 
Let $\w=(w_j)_j\in\W$ and $p,q\in\PP$.
\begin{enumerate}
\item[(i)] $\B$ is the set of all $f\in\cS'$ such that
\begin{equation}\label{Bnorm}
\|f\,|\B\|:= \big\| (w_j(\varphi_j\ast f))_j\,|\, \ell_\qx(L_\px)\big\| < \infty.
\end{equation}
\item[(ii)] Restricting to $p^+,q^+<\infty$, $\F$ is the set of all $f\in\cS'$ such that
\begin{equation}\label{Fnorm}
\|f\,|\F\|:= \big\| (w_j(\varphi_j\ast f))_j\, |\, L_\px(\ell_\qx)\big\| < \infty.
\end{equation}
\end{enumerate}
\end{definition}

The sets $\B$ and $\F$ become quasi-normed spaces equipped with the quasi-norms \eqref{Bnorm} and \eqref{Fnorm}, respectively. Moreover, they are independent of the admissible pair $(\varphi,\Phi)$ taken in its definition, at least when $p$ and $q$ satisfy some regularity assumptions. Indeed, if $\w\in\W$ and $p,q\in\PPlog$ (with $\max\{p^+,q^+\}<\infty$ in the $F$-case), then different such pairs produce equivalent quasi-norms in the corresponding spaces. This fact can be obtained as a consequence of the Peetre maximal functions characterization. We refer to the papers \cite[Theorem~6]{KemV12}, \cite[Corollary~4.7]{Kem09}, and also to \cite[Theorem~3.1 and Corollary~3.2]{AlmC15a} where some clarification and additional information is given. Notice that the ``unnatural'' restriction $q^+<\infty$ appearing in the $F$ space comes essentially from the application of Lemma~\ref{lem:conv-eta}(i) (cf. \cite{DieHR09}).

For simplicity we will omit the reference to the admissible pair $(\varphi,\Phi)$ used to define the quasi-norms \eqref{Bnorm} and \eqref{Fnorm}. Moreover, we often write $\A$ for short when there is no need to distinguish between Besov and Triebel-Lizorkin spaces.

The unified treatment given by the two scales above relies on \cite{Kem09}. The spaces $\A$ include, as particular cases, the Besov and Triebel-Lizorkin spaces with variable smoothness and integrability, $B^{s(\cdot)}_{\px,\qx}$  and $F^{s(\cdot)}_{\px,\qx}$, introduced in \cite{AlmH10} and \cite{DieHR09}, respectively, which in turn contain variable order Hölder-Zygmund spaces (\cite[Section~7]{AlmH10}), and variable exponent Sobolev spaces and Bessel potential spaces (\cite{AlmS06,GHN}). Moreover, weighted function spaces (see \cite[Chapter~4]{EdTri96}) and spaces with generalized smoothness (\cite{FarL06,KalLiz87,Mou01}) are also included in the scales $\A$. Classically, a fundamental example of $2$-microlocal spaces (from which the terminology seems to come from) are the spaces constructed from the special weight sequence given by $w_j(x)= 2^{js}(1+2^j\,|x-x_0|)^{s'}$ $(s,s'\in\R)$, in connection with the study of regularity properties of functions (see \cite{Pee75,Bo84,Jaf91,JafMey96}). We refer to \cite{AlmC15a,Kem09} for further references and details.

One of the objectives of this paper is to derive Sobolev type embeddings using appropriate atomic representations. At a more basic level, in \cite[Section~5]{AlmC15a} we have shown that, for $p,q,q_0,q_1\in\PP$ and $\w\in\W$, we have
\begin{equation*}
A^{\w}_{\px,\qzx} \hookrightarrow A^{\w}_{\px,\qumx}
\end{equation*}
when $q_0\le q_1$ (with $p^+,q_0^+,q_1^+ < \infty$ when $A=F$) and
\begin{equation*}
B^\w_{\px,\min\{\px,\qx\}} \hookrightarrow
\F \hookrightarrow B^\w_{\px,\max\{\px,\qx\}}
\end{equation*}
if $p^+,q^+ < \infty$. In particular, $B^\w_{\px,\px} = F^\w_{\px,\px}$.  In \cite{AlmC15a} we have also obtained the following embeddings which will be useful later on:
\begin{proposition}\label{pro:embed1}
Let $\w\in\W$, $\textbf{v}\in\mathcal{W}^{\beta}_{\beta_1,\beta_2}$, $p,q_0,q_1\in\PP$ and $\tfrac{1}{q^\ast}:= \Big(\tfrac{1}{q^-_1}-\tfrac{1}{q^+_0}\Big)_+$.\\ If $\big(\tfrac{v_j}{w_j}\big)_j\in \ell_{q^\ast}(L_\infty)$ when $A=B$ or
$\big(\tfrac{v_j}{w_j}\big)_j\in L_\infty(\ell_{q^\ast})$ and $p^+,q_0^+,q_1^+ < \infty$ when $A=F$, then
\begin{equation}\label{embed1}
A^{\w}_{\px,\qzx} \hookrightarrow A^{\textbf{v}}_{\px,\qumx}.
\end{equation}
\end{proposition}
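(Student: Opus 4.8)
The plan is to reduce the embedding to the elementary sequence-space inclusions already recorded in Lemma~\ref{lem:lpqembed} and to the iterated/one-entry structure of $\ell_\qx(L_\px)$, using the key observation that the quantity defining $\|f\,|A^{\mathbf{v}}_{\px,\qumx}\|$ can be rewritten in terms of the $\w$-weighted pieces $w_j(\varphi_j\ast f)$ multiplied by the pointwise multipliers $v_j/w_j$. Concretely, for $f\in A^{\w}_{\px,\qzx}$ I would start from the identity $v_j(\varphi_j\ast f)=\tfrac{v_j}{w_j}\,w_j(\varphi_j\ast f)$, which holds pointwise since all $w_j$ are positive measurable functions, and then estimate the norm of the sequence $\big(v_j(\varphi_j\ast f)\big)_j$ in $\ell_\qumx(L_\px)$ (resp.\ $L_\px(\ell_\qumx)$) by the norm of $\big(w_j(\varphi_j\ast f)\big)_j$ in $\ell_\qzx(L_\px)$ (resp.\ $L_\px(\ell_\qzx)$). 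Note that the quasi-norm in $A^{\mathbf{v}}_{\px,\qumx}$ may a priori depend on the admissible system, but under the standing assumption $p,q_1\in\PPlog$ (implicit once $q_1^+<\infty$ in the $F$-case and $\mathbf{v}\in\mathcal{W}^\beta_{\beta_1,\beta_2}$) it does not, so it suffices to work with a fixed admissible system.

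The heart of the matter is a H\"older-type splitting of the exponent: writing $\tfrac{1}{q_1(\cdot)}=\tfrac{1}{q_0(\cdot)}+\big(\tfrac{1}{q_1(\cdot)}-\tfrac{1}{q_0(\cdot)}\big)$ and absorbing the worst-case gap into the constant exponent $q^\ast$ defined by $\tfrac{1}{q^\ast}=\big(\tfrac{1}{q_1^-}-\tfrac{1}{q_0^+}\big)_+$. In the Besov case, using that $\ell_\qx(L_\px)$ is genuinely iterated for constant $q$ (see \eqref{iterated}) together with the monotonicity $\ell_\qzx(L_\px)\hookrightarrow\ell_{q_0^+}(L_\px)$ and $\ell_{q_1^-}(L_\px)\hookrightarrow\ell_\qumx(L_\px)$ from Lemma~\ref{lem:lpqembed}, the problem becomes a scalar H\"older inequality at the level of $\ell_{q_1^-}=\ell_{q_0^+}\cdot\ell_{q^\ast}$ applied to the numerical sequences $\big(\|w_j(\varphi_j\ast f)\,|L_\px\|\big)_j$ and $\big(\|v_j/w_j\,|L_\infty\|\big)_j$; the hypothesis $\big(v_j/w_j\big)_j\in\ell_{q^\ast}(L_\infty)$ bounds the second factor. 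In the Triebel--Lizorkin case one argues pointwise in $x$ inside the $L_\px$ norm: the inner $\ell_{q_1(x)}$ norm of $\big(v_j(x)(\varphi_j\ast f)(x)\big)_j$ is controlled, via scalar H\"older on sequences with the splitting $\tfrac1{q_1(x)}=\tfrac1{q_0(x)}+\tfrac{1}{r(x)}$ and then $\ell_{r(x)}\hookrightarrow\ell_{q^\ast}$, by $\|(v_j(x)/w_j(x))_j\,|\ell_{q^\ast}\|$ times the $\ell_{q_0(x)}$ norm of $\big(w_j(x)(\varphi_j\ast f)(x)\big)_j$, and the first factor is $\le \|(v_j/w_j)_j\,|L_\infty(\ell_{q^\ast})\|$ uniformly in $x$; taking $L_\px$ norms and using the lattice property finishes it. The restriction $p^+,q_0^+,q_1^+<\infty$ in the $F$-case is exactly what is needed to have the $L_\px(\ell_\qx)$ scales well-defined and to invoke Lemma~\ref{lem:lpqembed}.

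The main obstacle I anticipate is handling the case $q^\ast=\infty$ (equivalently $q_1^-\le q_0^+$) cleanly and uniformly: there the ``gap'' exponent is not summable and one cannot literally apply H\"older, so one must instead use a bound of the form $\|(v_j/w_j)_j\,|\ell_\infty(L_\infty)\|<\infty$ and simply estimate each entry $v_j(x)(\varphi_j\ast f)(x)$ by $\sup_k\|v_k/w_k\,|L_\infty\|\cdot w_j(x)(\varphi_j\ast f)(x)$, then combine with $\ell_\qzx(L_\px)\hookrightarrow\ell_\qumx(L_\px)$ (or its $L_\px(\ell_\qx)$ analogue) from Lemma~\ref{lem:lpqembed}; one should check this degenerate branch reproduces the same inequality with a harmless constant. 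A second, more bookkeeping, difficulty is making the variable-exponent H\"older step in the $F$-case fully rigorous when $q_0$ or $q_1$ is unbounded-looking pointwise even though globally bounded --- but since we assume $q_0^+,q_1^+<\infty$ here this is routine, and the pointwise scalar H\"older inequality on $\mathbb{N}_0$-indexed sequences with exponents $q_0(x),r(x),q_1(x)$ is classical. No density or approximation argument is needed: the estimate is a direct norm inequality valid for every $f$ in the source space, and it immediately gives both the set inclusion and the continuity of the embedding.
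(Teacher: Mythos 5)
First, a point of comparison: this paper does not actually prove Proposition~\ref{pro:embed1}; it is quoted from \cite{AlmC15a}, and the remark immediately following it stresses that only $p,q_0,q_1\in\PP$ is assumed and that the embedding is to be understood with one and the same fixed admissible system for all spaces involved. Your parenthetical claim that $p,q_1\in\PPlog$ is ``implicit'' (and the ensuing appeal to independence of the admissible pair) is therefore incorrect, though harmless, because your estimates are in any case direct norm inequalities for a fixed system, which is exactly how the statement is meant to be read.

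With that caveat, your plan is correct and is the natural argument suggested by the very definition of $q^\ast$. In the $B$-case the chain $\ell_\qzx(L_\px)\hookrightarrow\ell_{q_0^+}(L_\px)$, the iterated formula \eqref{iterated} for constant exponents, the scalar H\"older inequality for $\tfrac{1}{q_1^-}=\tfrac{1}{q^\ast}+\tfrac{1}{q_0^+}$ applied to $\big(\|w_j(\varphi_j\ast f)\,|L_\px\|\big)_j$ and $\big(\|v_j/w_j\,|L_\infty\|\big)_j$, and finally $\ell_{q_1^-}(L_\px)\hookrightarrow\ell_\qumx(L_\px)$, give the claim; in the degenerate case $q^\ast=\infty$ one has $q_0\le q_0^+\le q_1^-\le q_1$, so the uniform bound $\sup_j\|v_j/w_j\,|L_\infty\|<\infty$ together with the first part of Lemma~\ref{lem:lpqembed} suffices, as you indicate. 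In the $F$-case one small direction is misstated: you write $\ell_{r(x)}\hookrightarrow\ell_{q^\ast}$, but what you need and in fact use is $\ell_{q^\ast}\hookrightarrow\ell_{r(x)}$, which holds pointwise because $\tfrac{1}{r(x)}=\big(\tfrac{1}{q_1(x)}-\tfrac{1}{q_0(x)}\big)_+\le\big(\tfrac{1}{q_1^-}-\tfrac{1}{q_0^+}\big)_+=\tfrac{1}{q^\ast}$; with this correction the pointwise estimate by $\|(v_j/w_j)_j\,|L_\infty(\ell_{q^\ast})\|$ times the $\ell_{q_0(x)}$-norm of $\big(w_j(x)(\varphi_j\ast f)(x)\big)_j$ is valid, and the lattice property of $L_\px$ finishes the proof. (Also note the restriction $p^+,q_0^+,q_1^+<\infty$ in the $F$-case is needed simply because the spaces $\F$ are only defined under it, not for the mixed quasi-norm or for the monotone part of Lemma~\ref{lem:lpqembed}.) These are minor slips, not gaps.
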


Although we are interested to work with function spaces independent of the starting system $\{ \varphi_j\}$, the fact is that we do not need the $\log$-H\"older conditions on the exponents to derive the previous embeddings. This means that the conditions assumed above are those actually needed in the proofs, and that the results should then be understood to hold when the same fixed system is used for the definition of all spaces involved.

We also have
\[
\cS \hookrightarrow \A \hookrightarrow \cS'
\]
for $p,q\in\PPlog$ and $\w\in\W$ (with $p^+,q^+ < \infty$ in the $F$ case). We refer to \cite[Theorem~5.4]{AlmC15a} for a proof using liftings. An alternative proof could also be derived using Sobolev type embeddings (see Corollary~\ref{cor:Sobolev-embed}) and the above embeddings, following the scheme used in \cite[Theorem~2.11]{MouNS13} for constant $q$.

\section{Atomic and molecular decompositions}\label{sec:atom-mol-charact}

We use the notation $Q_{jm}$, with $j\in\Nz$ and $m\in\Zn$, for the closed cube with sides parallel to the coordinate axes, centered at $2^{-j}m$ and with side length $2^{-j}$. By $\chi_{jm}$ we denote the corresponding characteristic function. The notation $d\,Q_{jm}$, $d>0$, will stand for the closed cube concentric with $Q_{jm}$ and of side length $d2^{-j}$.

\begin{definition}[Atoms]
Let $K,L\in\Nz$ and $d>1$. For each $j\in\Nz$ and $m\in\Zn$, a $C^K$-function $a_{jm}$ on $\Rn$ is called a $(K,L,d)$-atom (supported near $Q_{jm}$) if
\begin{equation*}
\supp a_{jm} \subset d\,Q_{jm}\,,
\end{equation*}
\begin{equation}\label{atom-supp}
\sup_{x\in\Rn} |D^\gamma a_{jm}(x)| \le 2^{|\gamma|j}\,, \ \ \ \ 0\le |\gamma| \le K,
\end{equation}
and
\begin{equation}\label{atom-moments}
\int_{\Rn} x^\gamma\, a_{jm}(x)\, dx=0\,, \ \ \ \ 0\le |\gamma| <L.
\end{equation}
\end{definition}

A $(K,0,d)$-atom is an atom for which the requirement \eqref{atom-moments} is void, that is, for which the so-called \emph{moment conditions} are not required. The $a_{0,m}$ atoms we are going to consider are always $(K,0,d)$-atoms.

\begin{definition}[Molecules]
Let $K,L\in\Nz$ and $M>0$. For each $j\in\Nz$ and $m\in\Zn$, a $C^K$-function $[aa]_{jm}$ on $\Rn$ is called a $(K,L,M)$-molecule (concentrated near $Q_{jm}$) if
\begin{equation}\label{mol-supp}
|D^\gamma [aa]_{jm}(x)| \le 2^{|\gamma|j} (1+2^j|x-2^{-j}m|)^{-M}\,, \ \ \ \ x\in\Rn, \ \ 0\le |\gamma| \le K,
\end{equation}
and
\begin{equation}\label{mol-moments}
\int_{\Rn} x^\gamma\, [aa]_{jm}(x)\, dx=0\,, \ \ \ \ 0\le |\gamma| <L.
\end{equation}
\end{definition}

A $(K,0,M)$-molecule is a molecule for which the requirement \eqref{mol-moments} is void (i.e., moment conditions are not required). The $[aa]_{0,m}$ molecules we will consider are always $(K,0,M)$-molecules.

\begin{remark}\label{atom-is-molecule}
It is easy to check that if $a_{jm}$ is a $(K,L,d)$-atom (so supported near $Q_{jm}$), then, given any $M>0$, $\big(1+\tfrac{d\sqrt{n}}{2}\big)^{-M}a_{jm}$ is a $(K,L,M)$-molecule concentrated near $Q_{jm}$.
\end{remark}

\begin{definition}
Let $\w\in\W$ and $p,q\in\PP$. The sets $\bs$ and $\fs$ consist of all (complex-valued) sequences $\lambda=(\lambda_{jm})_{j\in\Nz\;\; \atop m\in\Zn}$ such that
\begin{equation*}
\|\lambda \,| \bs\|:= \left\| \left( \sum_{m\in\Zn} \lambda_{jm}\,w_j(2^{-j}m)\, \chi_{jm} \right)_j\, | \ell_\qx(L_\px) \right\| < \infty
\end{equation*}
and
\begin{equation*}
\|\lambda \,| \fs\|:= \left\| \left( \sum_{m\in\Zn} \lambda_{jm}\,w_j(2^{-j}m) \, \chi_{jm} \right)_j\, | L_\px(\ell_\qx) \right\| < \infty,
\end{equation*}
respectively.
\end{definition}

Using standard arguments, it is easy to see that the sets $\bs$ and $\fs$ become quasi-normed spaces equipped with the functionals above. Furthermore, taking into account the properties of class $\W$, we can use $w_j(x)$ in place of $w_j(2^{-j}m)$ in the expressions above, up to equivalent quasi-norms.

For short we will write $(\lambda_{jm})$ instead of $(\lambda_{jm})_{j\in\Nz\;\; \atop m\in\Zn}$  when it is clear we are working with the exhibited set of indices. Similarly as before, we shall write $\as$ to refer to both spaces without distinction.


From Lemma~\ref{lem:lpqembed} we immediately get the following result:

\begin{corollary}\label{cor:embedbf}
Let $\w\in\W$ and $p,q,q_0,q_1\in\PPlog$. If $q_0 \le q_1$, then
\begin{equation*}
a^{\w}_{\px,\qzx} \hookrightarrow a^{\w}_{\px,\qumx}.
\end{equation*}
Moreover, if $p^+,q^+ <\infty$, then
\begin{equation*}
b^\w_{\px,\min\{\px,\qx\}} \hookrightarrow
\fs \hookrightarrow
b^\w_{\px,\max\{\px,\qx\}}.
\end{equation*}
\end{corollary}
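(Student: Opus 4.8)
The proof is an immediate consequence of Lemma~\ref{lem:lpqembed} applied to the concrete sequences appearing in the definitions of the sequence spaces $\bs$ and $\fs$, so the plan is essentially bookkeeping rather than genuine new work.

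\textbf{Plan of proof.} Fix a complex sequence $\lambda=(\lambda_{jm})$ and form, for each $j\in\Nz$, the function
\[
g_j:=\sum_{m\in\Zn} \lambda_{jm}\,w_j(2^{-j}m)\,\chi_{jm}\,.
\]
By the very definitions of $\bs$, $\fs$, $b^\w_{\px,\qzx}$ etc., the claimed embeddings say exactly that $\|(g_j)_j\,|\,\ell_\qumx(L_\px)\|\lesssim\|(g_j)_j\,|\,\ell_\qzx(L_\px)\|$ in the Besov case, and the analogous estimate with $\ell$-$L$ replaced by $L$-$\ell$ in the Triebel--Lizorkin case, together with the mixed chain
\[
\ell_{\min\{\px,\qx\}}(L_\px)\hookrightarrow L_\px(\ell_\qx)\hookrightarrow \ell_{\max\{\px,\qx\}}(L_\px)
\]
read off on $(g_j)_j$. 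All of these are precisely the embeddings asserted in Lemma~\ref{lem:lpqembed} (the first part for $q_0\le q_1$, the second part under $p^+,q^+<\infty$), applied pointwise to the sequence $(g_j)_j$ of measurable functions. Hence the result follows by simply quoting that lemma.

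\textbf{Steps, in order.} First, translate the statement into the language of mixed sequence spaces by introducing the functions $g_j$ above; this makes clear that nothing about atoms, cubes, or the weight structure beyond $\w\in\W$ is actually used here. Second, invoke Lemma~\ref{lem:lpqembed}: for $q_0\le q_1$ it gives $\ell_\qzx(L_\px)\hookrightarrow\ell_\qumx(L_\px)$ and $L_\px(\ell_\qzx)\hookrightarrow L_\px(\ell_\qumx)$, which yields $a^\w_{\px,\qzx}\hookrightarrow a^\w_{\px,\qumx}$ in both the $B$ and $F$ cases at once. Third, again by Lemma~\ref{lem:lpqembed}, under the additional hypothesis $p^+,q^+<\infty$ we have $\ell_{\min\{\px,\qx\}}(L_\px)\hookrightarrow L_\px(\ell_\qx)\hookrightarrow\ell_{\max\{\px,\qx\}}(L_\px)$; applied to $(g_j)_j$ this is exactly $b^\w_{\px,\min\{\px,\qx\}}\hookrightarrow\fs\hookrightarrow b^\w_{\px,\max\{\px,\qx\}}$. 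The only minor point to note is that the $\log$-H\"older hypothesis $p,q,q_0,q_1\in\PPlog$ is stronger than needed for Lemma~\ref{lem:lpqembed} (which requires only $p,q\in\PP$); it is kept in the statement merely so that $\as$ are the ``right'' spaces matching $\A$.

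\textbf{Main obstacle.} There is essentially no obstacle: the content is entirely contained in Lemma~\ref{lem:lpqembed}, and the passage from that lemma to the sequence spaces $\as$ is formal, since the defining quasi-norms of $\bs$ and $\fs$ are literally the $\ell_\qx(L_\px)$- and $L_\px(\ell_\qx)$-quasi-norms of the single sequence $(g_j)_j$. If anything requires a word of care it is only the observation, already recorded before Corollary~\ref{cor:embedbf} in the excerpt, that one may use $w_j(2^{-j}m)$ (or equivalently $w_j(x)$ on $Q_{jm}$) inside these expressions up to equivalent quasi-norms, so that the comparison is between genuinely the same building blocks on both sides.
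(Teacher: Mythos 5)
Your proposal is correct and follows exactly the paper's route: the corollary is obtained by reading the defining quasi-norms of $\bs$ and $\fs$ as the $\ell_\qx(L_\px)$- and $L_\px(\ell_\qx)$-quasi-norms of the sequence $\big(\sum_{m}\lambda_{jm}w_j(2^{-j}m)\chi_{jm}\big)_j$ and quoting Lemma~\ref{lem:lpqembed}, which is precisely why the paper states it with ``From Lemma~\ref{lem:lpqembed} we immediately get'' and gives no further argument. Your side remarks (that the $\log$-H\"older hypothesis is not needed for this step, and that $w_j(2^{-j}m)$ may be replaced by $w_j(x)$ up to equivalence) are accurate and consistent with the paper.
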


For $r,t\in(0,\infty]$ we use the notation
$$ \sigma_{r,t}:=n\left(\tfrac{1}{\min\{1,r,t\}}-1 \right)  \ \ \ \text{and} \ \ \ \sigma_{r}:=\sigma_{r,r}.$$

Before passing to the main atomic and molecular representation statements, we would like to take the opportunity to clarify some convergence issues that typically are not so clearly mentioned in the literature.


\begin{proposition}\label{pro:conv}
Let $\w\in\W$, $p\in\PPlog$. Let also $K,L\in\Nz$ and $M>0$ be such that
\begin{equation}\label{conv-conditions}
L>\sigma_{p^-}-\alpha_1 \ \ \ \ \ \text{and} \ \ \ \ \ M>L+2n+2\alpha+2\,c_{\log}(1/p)\,\sigma_{p^-}.
\end{equation}
Let $[aa]_{0m}$, $m\in\Zn$, be $(K,0,M)$-molecules and, for $j\in\N$ and $m\in\Zn$, $[aa]_{jm}$ be $(K,L,M)$-molecules. If $\lambda\in b^\w_{\px,\infty}$, then
\begin{equation}\label{conv}
\sum_{j=0}^\infty \sum_{m\in\Zn} \lambda_{jm}\, [aa]_{jm} 
\end{equation}
converges in $\cS'$ and the convergence in $\cS'$ of the inner sum gives the regular distribution obtained by taking the corresponding pointwise convergence. Moreover, the sum
\begin{equation}\label{conv-double}
\sum_{(j,m)\in\Nz\times\Zn} \lambda_{jm}\, [aa]_{jm}
\end{equation}
converges also in $\cS'$ (to the same distribution to which the iterated series in \eqref{conv} converges).
\end{proposition}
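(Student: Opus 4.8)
\emph{Sketch of the intended proof.} The argument is modelled on the classical scheme for molecular decompositions, adapted to the variable setting (cf. \cite{Kem10}); since the index $q=\infty$ is constant, only $p$ plays a role. \textbf{Step 1 (coefficient estimates).} Because the cubes $(Q_{jm})_{m\in\Zn}$ have pairwise disjoint interiors, the lattice property of $L_\px$ yields, for each $j\in\Nz$ and $m\in\Zn$,
\[
|\lambda_{jm}|\,w_j(2^{-j}m)\,\|\chi_{jm}\,|\,L_\px\| \le \Big\| \sum_{m'\in\Zn}\lambda_{jm'}\,w_j(2^{-j}m')\,\chi_{jm'}\,\Big|\,L_\px\Big\| \le \|\lambda\,|\,b^\w_{\px,\infty}\|,
\]
whence, by \eqref{norm-charact-func-cubes} (and its $|Q|\ge1$ analogue for $j=0$), $|\lambda_{jm}|\lesssim 2^{jn/p(2^{-j}m)}\,w_j(2^{-j}m)^{-1}\,\|\lambda\,|\,b^\w_{\px,\infty}\|$. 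For the summation over $m$ a finer, $\ell^1$-flavoured control of $(\lambda_{jm}\,w_j(2^{-j}m))_{m}$ is drawn from the same display by combining the semimodular estimate \eqref{Lp-norm-mod} with duality (H\"older's inequality in $L_\px$ when $p^-\ge1$, the embedding $\ell_{p^-}\hookrightarrow\ell_1$ when $p^-<1$) and the equivalence $2^{jn/p(y)}\approx 2^{jn/p(2^{-j}m)}$ for $y\in Q_{jm}$.

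\textbf{Step 2 (convergence of the inner series).} Fix $j\in\Nz$. Using \eqref{mol-supp} with $\gamma=0$, the first bound of Step~1, and property (i) of $\W$ in the form $w_j(2^{-j}m)^{-1}\lesssim w_j(x)^{-1}(1+2^j|x-2^{-j}m|)^{\alpha}$, one obtains
\[
\sum_{m\in\Zn}|\lambda_{jm}|\,|[aa]_{jm}(x)| \lesssim w_j(x)^{-1}\,\|\lambda\,|\,b^\w_{\px,\infty}\|\, 2^{jn/p^-}\sum_{m\in\Zn}(1+2^j|x-2^{-j}m|)^{-M+\alpha},
\]
and the last sum is finite and bounded uniformly in $j,x$ since $M>n+\alpha$ by \eqref{conv-conditions}. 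As each molecule is continuous, the inner series converges absolutely and locally uniformly to a continuous function $g_j$ which, by properties (i)--(ii) of $\W$, satisfies $|g_j(x)|\lesssim_j(1+|x|)^{\alpha}$ and hence defines a regular tempered distribution; pairing the partial sums with an arbitrary $\psi\in\cS$ and applying dominated convergence (dominant $c_j(1+|x|)^{\alpha}|\psi(x)|\in L_1$) shows that the inner series converges to $g_j$ also in $\cS'$. This establishes the assertion about the inner sum in \eqref{conv}.

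\textbf{Step 3 (the outer and the double series).} For $j\ge1$ the molecules carry the moment conditions \eqref{mol-moments}; expanding a test function $\psi\in\cS$ in its Taylor polynomial of degree $L-1$ about $2^{-j}m$ and using \eqref{mol-supp} together with the rapid decay of $\psi$ gives, for every $N>0$,
\[
|\langle[aa]_{jm},\psi\rangle| \lesssim_{N,\psi} 2^{-j(L+n)}(1+|2^{-j}m|)^{-N}, \qquad j\ge1
\]
(without the factor $2^{-jL}$ when $j=0$). Inserting the coefficient control of Step~1, estimating $w_j(2^{-j}m)^{-1}\lesssim 2^{-\alpha_1 j}(1+|2^{-j}m|)^{\alpha}$, and carrying out the summation over $m$ via the $\ell^1$-type bound rather than the crude $\ell^\infty$-bound (which is precisely what prevents the loss of a spurious factor $2^{jn}$), and then summing over $j$, one arrives at
\[
\sum_{j\ge0}\sum_{m\in\Zn}|\lambda_{jm}|\,|\langle[aa]_{jm},\psi\rangle| \lesssim \|\lambda\,|\,b^\w_{\px,\infty}\|\Big(1+\sum_{j\ge1}2^{\,j(\sigma_{p^-}-\alpha_1-L)}\Big)<\infty,
\]
the series converging because $L>\sigma_{p^-}-\alpha_1$ by \eqref{conv-conditions}. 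Hence $\sum_j|\langle g_j,\psi\rangle|<\infty$, so the iterated series \eqref{conv} converges in $\cS'$; moreover the family $\big(\lambda_{jm}\langle[aa]_{jm},\psi\rangle\big)_{(j,m)\in\Nz\times\Zn}$ is absolutely summable for every $\psi\in\cS$, which yields convergence of \eqref{conv-double} in $\cS'$, and a rearrangement of this absolutely convergent double series identifies its sum with $\langle\sum_j g_j,\psi\rangle$, i.e.\ with the limit of \eqref{conv}.

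\textbf{Expected main obstacle.} The delicate part is Step~1 together with the $m$-summation in Step~3: one has to replace the trivial $\ell^\infty$-bound on the $\lambda_{jm}$ by a summable substitute extracted from the $L_\px$-quasi-norm, and in the quasi-Banach range $p^-<1$ this forces one to work with the exponent $\min\{1,p^-\}$ and to pay a $\log$-H\"older toll — which is exactly what the (at first sight generous) lower bound imposed on $M$ in \eqref{conv-conditions} is there to absorb. Once the correct power $2^{\,j(\sigma_{p^-}-\alpha_1-L)}$ has been produced, the summations over $m$ and $j$ and the identification of the two modes of convergence are routine.
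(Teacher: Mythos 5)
Your strategy is sound and it is \emph{not} the route the paper takes for the delicate part. Your Steps 1--2 coincide with the paper's Step 1 (the inner sum is handled there exactly as you do, via \eqref{norm-charact-func-cubes}, the coefficient bound $|\lambda_{jm}|w_j(2^{-j}m)\|\chi_{jm}|L_\px\|\le\|\lambda|b^\w_{\px,\infty}\|$, the decay \eqref{mol-supp} and dominated convergence), and your treatment of the double series (absolute summability of $(\lambda_{jm}\langle [aa]_{jm},\psi\rangle)$ plus rearrangement) matches the paper's Step 3. The difference is the outer sum: the paper does not argue directly but follows Kempka's appendix scheme, whose key ingredient is the discrete Sobolev-type embedding $b^{\mathbf{w}^0}_{p_0(\cdot),\infty}\hookrightarrow b^{\mathbf{w}^1}_{p_1(\cdot),\infty}$ with $w^0_j/w^1_j=2^{j(n/p_0-n/p_1)}$ (a Nikolskii-type inequality in disguise); this is precisely where the term $2\,c_{\log}(1/p)\,\sigma_{p^-}$ in the hypothesis on $M$ originates. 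You instead pair each molecule with $\psi$ via a Taylor expansion of order $L-1$ and sum directly, extracting summability in $m$ from the $L_\px$-quasi-norm of $\sum_m\lambda_{jm}w_j(2^{-j}m)\chi_{jm}$. This is more self-contained (no variable-exponent Nikolskii machinery), it produces the correct power $2^{j(\sigma_{p^-}-\alpha_1-L)}$ and hence uses $L>\sigma_{p^-}-\alpha_1$ exactly as intended, and it in fact needs only a decay of roughly $M>L+2n+\alpha$, which \eqref{conv-conditions} supplies with room to spare; what the paper's route buys is compatibility with Kempka's framework and the explicit sufficient bound on $M$ stated in \eqref{conv-conditions}.

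One caution on the only step you leave as an assertion, the ``$\ell^1$-flavoured'' control of $\sum_m |\lambda_{jm}|\,w_j(2^{-j}m)\,(1+|2^{-j}m|)^{\alpha-N}$. If, for $p^-<1$, you apply $\ell_{p^-}\hookrightarrow\ell_1$ to the whole sum, the terms with small coefficients force $(N-\alpha)p^->n$, hence $M>L+2n+\alpha+\sigma_{p^-}$, which \eqref{conv-conditions} does \emph{not} guarantee when $c_{\log}(1/p)$ and $\alpha$ are small (take $p<1$ constant and $w_j=2^{js}$, so $c_{\log}(1/p)=0=\alpha$). The fix is elementary: normalize $\|\lambda|b^\w_{\px,\infty}\|=1$, set $t_{jm}:=|\lambda_{jm}|w_j(2^{-j}m)$, and split at height one. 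For $t_{jm}<1$ bound the terms by $(1+|2^{-j}m|)^{\alpha-N}$ and sum ($\lesssim 2^{jn}$ once $N>\alpha+n$); for $t_{jm}\ge1$ use the unit ball property of the semimodular on $Q_{jm}$ to get $\sum_{t_{jm}\ge1}t_{jm}^{\,p^-_{Q_{jm}}}\le 2^{jn}$ and only then the embedding $\ell_{p^-}\hookrightarrow\ell_1$, which yields $\lesssim 2^{jn/p^-}$. Altogether the sum is $\lesssim 2^{j(n+\sigma_{p^-})}$ with only $N>\alpha+n$, so $M>N+L+n$ is available under \eqref{conv-conditions} and your claimed bound $2^{j(\sigma_{p^-}-\alpha_1-L)}$ follows; for $p^-\ge1$ your H\"older/duality argument works as stated. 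With this amendment your proof is complete and independent of the paper's Sobolev-embedding detour.
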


\begin{remark}
Although in \cite[Lemma~3.11]{Kem10} we can see an effort to clarify the convergence implied in \eqref{conv}, only the proof of the convergence of the outer sum is outlined, see \cite[Appendix]{Kem10}. So, in order to complete the picture, we refer the reader to Subsection~\ref{sec:proof-pro} below, where the missing parts are proved.
\end{remark}


\begin{remark}\label{rem:convergence} By Corollary~\ref{cor:embedbf} we see that the convergence results remain valid whenever $\lambda\in \as$ (with $p^+,q^+<\infty$ in the case $a=f$). Moreover, a corresponding result also holds when we replace molecules by atoms since the latter are essentially special cases of the former up to multiplicative constants. Indeed, given $(K,L,d)$-atoms $a_{jm}$, for any $M>0$, $\big(1+\tfrac{d\sqrt{n}}{2}\big)^{-M}a_{jm}$ are $(K,L,M)$-molecules of a corresponding type (cf. Remark~\ref{atom-is-molecule} above). As long as we choose $M$ satisfying the second assumption in \eqref{conv-conditions} and take $\lambda\in \as$, we guarantee the convergence of
$$\sum_{j=0}^\infty \sum_{m\in\Zn} \lambda_{jm}\, \big(1+\tfrac{d\sqrt{n}}{2}\big)^{-M}a_{jm} \ \ \ \ \ \ \text{and} \ \ \ \ \ \sum_{(j,m)\in \Nz\times \Zn} \lambda_{jm}\, \big(1+\tfrac{d\sqrt{n}}{2}\big)^{-M}a_{jm}$$
in $\cS'$, where clearly the extra constant $\big(1+\tfrac{d\sqrt{n}}{2}\big)^{-M}$ can be taken out of the sums.
\end{remark}


In the sequel we present atomic and molecular decompositions statements for the general spaces $\A$. Our statements introduce several improvements to the usual way of presenting corresponding results in the related literature when dealing with different particular cases. By these reasons, we give detailed formulations below.

Coming back to Proposition~\ref{pro:conv}, one can show that if $\lambda=(\lambda_{jm})$ belongs to suitable sequence spaces and one makes appropriate stronger assumptions on $K$, $L$ and $M$, then the distribution represented by \eqref{conv} belongs indeed to a certain corresponding function space and its quasi-norm is controlled from above by the corresponding quasi-norm of $\lambda$:

\begin{theorem}\label{thm:decomp1}
Let $\w\in\W$, $p,q\in\PPlog$, $K,L\in\Nz$ and $M>0$ be such that
\begin{equation}\label{conv-conditions1}
K>\alpha_2\,, \ \ \ L>\sigma_{p^-}-\alpha_1 +c_{\log}(1/q)\,, \ \ \ M>L+2n+2\alpha+\max\{1,2\,c_{\log}(1/p)\}\sigma_{p^-}+c_{\log}(1/q)
\end{equation}
when $a=b$ and $A=B$, or
\begin{equation}\label{conv-conditions2}
K>\alpha_2\,, \ \ \ L>\sigma_{p^-,q^-}-\alpha_1\,, \ \ \ M>L+2n+2\alpha+\max\{1,2\,c_{\log}(1/p)\}\,\sigma_{p^-,q^-},
\end{equation}
when $a=f$ and $A=F$ (with the additional restriction $p^+,q^+<\infty$ in this case). Let also $[aa]_{0m}$, $m\in\Zn$, be $(K,0,M)$-molecules and, for $j\in\N$ and $m\in\Zn$, $[aa]_{jm}$ be $(K,L,M)$-molecules. If $\lambda=(\lambda_{jm})\in \as$ then
\begin{equation}\label{eq:conv}
g:=\sum_{j=0}^\infty \sum_{m\in\Zn} \lambda_{jm}\, [aa]_{jm} \in \A.
\end{equation}
Moreover, there exists $c>0$ such that
\begin{equation}\label{aux4}
\|g\,| \A\| \le c\, \|\lambda\,| \as\|
\end{equation}
for all such molecules and all such $\lambda$.
\end{theorem}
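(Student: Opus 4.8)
The plan is to estimate the quasi-norm of the partial sums $g_N := \sum_{j=0}^N \sum_{m\in\Zn}\lambda_{jm}[aa]_{jm}$ in $\A$ uniformly in $N$, and then conclude by combining this with the $\cS'$-convergence already granted by Proposition~\ref{pro:conv} (recall $\as \hookrightarrow b^\w_{\px,\infty}$ by Corollary~\ref{cor:embedbf} and the embedding $\ell_\qx(L_\px)\hookrightarrow\ell_\infty(L_\px)$, so the hypotheses of that proposition are met once we check the condition $L>\sigma_{p^-}-\alpha_1$ is implied by \eqref{conv-conditions1}/\eqref{conv-conditions2}). The core estimate is to bound $w_k(\varphi_k * g)$ pointwise. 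First I would fix an admissible system $\{\varphi_k\}$ and compute $\varphi_k * [aa]_{jm}$. The standard two-regime argument applies: for $k\le j$ one uses the $K$ derivatives of the molecule (Taylor expansion of $\varphi_k$ against the molecule, exploiting $K>\alpha_2$) together with the decay $(1+2^j|\cdot - 2^{-j}m|)^{-M}$; for $k>j$ one uses the $L$ moment conditions of $[aa]_{jm}$ (Taylor expansion of $\varphi_k$), again with the decay factor. In both regimes one arrives at a bound of the form
\[
|(\varphi_k * [aa]_{jm})(x)| \lesssim 2^{-|k-j|N_0}\, \eta_{k\wedge j,\,R}(x-2^{-j}m)\,2^{-n(j\wedge k)}\!\cdot 2^{n j}
\]
for suitable $N_0$ (depending on $K$, $L$) and $R$ (depending on $M$), where the $\log$-Hölder offsets in \eqref{conv-conditions1}/\eqref{conv-conditions2} are exactly what is needed so that after multiplying by $w_k$ and using property (ii) of class $\W$ (to trade $w_k$ for $w_j$ at cost $2^{|k-j|\max\{|\alpha_1|,|\alpha_2|\}}$) and property (i) (to move $w_j(x)$ to $w_j(2^{-j}m)$ at cost $(1+2^j|x-2^{-j}m|)^\alpha$, absorbed into the $\eta$ factor) one still has room to spare.

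Next I would sum over $m$. The key device here is the pointwise (for each $k$) estimate, valid since the cubes $Q_{jm}$ tile $\Rn$,
\[
\sum_{m\in\Zn} |\lambda_{jm}|\,w_j(2^{-j}m)\,\eta_{k\wedge j,R}(x-2^{-j}m) \;\lesssim\; 2^{(k-j)_+\, n}\;\Bigl(\eta_{k\wedge j,\,R-\epsilon} * \sum_{m}|\lambda_{jm}|\,w_j(2^{-j}m)\,\chi_{jm}\Bigr)(x),
\]
a routine convolution-comparison lemma (this is where $R>n$, equivalently the lower bound on $M$, is used, and where for the $F$-scale one needs $R$ large enough that Lemma~\ref{lem:conv-eta}(i) applies, for the $B$-scale that Lemma~\ref{lem:conv-eta}(ii) applies — hence the extra $c_{\log}(1/q)$ in \eqref{conv-conditions1}). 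Writing $t_j := \sum_m |\lambda_{jm}|\,w_j(2^{-j}m)\,\chi_{jm}$, so that $\|\lambda\,|\as\| = \|(t_j)_j\,|\ell_\qx(L_\px)\|$ (resp.\ $L_\px(\ell_\qx)$), I obtain
\[
w_k\,|\varphi_k * g| \;\lesssim\; \sum_{j\ge 0} 2^{-|k-j|\delta}\,\bigl(\eta_{k\wedge j,R'} * t_j\bigr)
\]
for some $\delta>0$; the subtle point requiring $\sigma_{p^-}$ (resp.\ $\sigma_{p^-,q^-}$) is that when passing to the quasi-norm one uses the $r$-triangle inequality with $r=\min\{1,p^-\}$ (resp.\ $\min\{1,p^-,q^-\}$), and the $2^{-|k-j|\delta}$ geometric decay must survive after being raised to the power $r$ and after the loss $2^{(k-j)_+n}$ from the convolution-comparison step and the loss $2^{(k-j)_+\sigma}$ hidden in the derivative/moment bounds — all of which is precisely what the $\sigma$-terms in \eqref{conv-conditions1}/\eqref{conv-conditions2} buy.

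Finally I would apply the $\ell_\qx(L_\px)$- (resp.\ $L_\px(\ell_\qx)$-) quasi-norm: use its lattice property, the $r$-triangle inequality to split the sum over $j$, then for each fixed $j$ invoke the convolution inequality of Lemma~\ref{lem:conv-eta}(ii) (resp.\ (i)) to absorb $\eta_{k\wedge j,R'}*(\cdot)$ — after a harmless shift of the sequence index from $k\wedge j$ to $k$, which costs only a bounded factor by the admissibility of the system and another geometric factor — obtaining $\sum_j 2^{-|k-j|\delta}\|(t_j)\,|\cdots\|$-type bounds that sum to $\lesssim \|\lambda\,|\as\|$. This yields \eqref{aux4} for $g_N$ uniformly in $N$; since $g_N\to g$ in $\cS'$, a standard lower-semicontinuity/Fatou argument for the $\ell_\qx(L_\px)$ and $L_\px(\ell_\qx)$ quasi-norms (applied to $(w_k(\varphi_k * g_N))_k$, which converges to $(w_k(\varphi_k * g))_k$ pointwise along a subsequence) gives $g\in\A$ with \eqref{aux4}, hence also \eqref{eq:conv}. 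The main obstacle I expect is bookkeeping the interaction of the three losses (from $\W$-property (ii), from summing over $m$, and from the $r$-triangle inequality with the small exponent) so that the geometric factor $2^{-|k-j|\delta}$ genuinely has $\delta>0$ — this is exactly the role of the somewhat baroque-looking conditions \eqref{conv-conditions1} and \eqref{conv-conditions2}, and getting the constants to line up (especially the $\max\{1,2c_{\log}(1/p)\}$ factor, which reflects whether the $\log$-Hölder correction enters linearly or through the unit-ball-property exponent in \eqref{Lp-norm-mod}) is the delicate part; the variable-$q$ Besov case is additionally delicate because $\ell_\qx(L_\px)$ lacks good maximal-operator bounds, so one must route everything through Lemma~\ref{lem:conv-eta}(ii) rather than a maximal-function estimate.
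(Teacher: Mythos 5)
Your overall strategy --- two-regime estimates for $\varphi_k\ast[aa]_{jm}$, comparison of the sums over $m$ with $\eta$-convolutions, the convolution inequalities of Lemma~\ref{lem:conv-eta} in place of maximal-function bounds, and a uniform estimate of partial sums combined with $\cS'$-convergence and a Fatou-type argument --- is the same Frazier--Jawerth/Kempka scheme that the paper itself follows (the paper does not rewrite the full argument; it refers to \cite{Kem10} and isolates the auxiliary lemmas that must be modified for variable $q$). However, two points need fixing. First, you have the regimes interchanged: for $k\le j$ (when $\varphi_k$ varies slowly compared with the scale $2^{-j}$ of the molecule) the gain $2^{-(j-k)(L+n)}$ comes from the $L$ moment conditions of $[aa]_{jm}$, and the weight cost there is $2^{-(j-k)\alpha_1}$, which is why $L$ is tested against $-\alpha_1$; for $k\ge j$ it is the $C^K$ smoothness of the molecule, played against the vanishing moments of $\varphi_k$, that yields $2^{-(k-j)K}$ against the cost $2^{(k-j)\alpha_2}$, whence $K>\alpha_2$. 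Related to this, trading $w_k$ for $w_j$ at the symmetric cost $2^{|k-j|\max\{|\alpha_1|,|\alpha_2|\}}$ is too crude to recover \eqref{conv-conditions1}--\eqref{conv-conditions2}; the one-sided estimates $w_k\le 2^{(k-j)\alpha_2}w_j$ for $k\ge j$ and $w_k\le 2^{-(j-k)\alpha_1}w_j$ for $k\le j$ must be used separately.

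The genuine gap is in the step where you sum over $m$ and then invoke Lemma~\ref{lem:conv-eta}. Your comparison estimate produces $\eta_{k\wedge j,R'}\ast t_j$ with no power inside the convolution, but Lemma~\ref{lem:conv-eta}(ii) requires $p^-\ge 1$ and (i) requires $p^-,q^->1$, so this route collapses exactly in the range $p^-\le 1$ (resp.\ $\min\{p^-,q^-\}\le 1$) that the theorem is meant to cover; the $r$-triangle inequality over $j$ cannot repair this, because the obstruction sits inside each fixed-$j$ term. What is needed is the $t$-power form of the comparison lemma, namely Lemma~\ref{lem:eta-instaed-maximal}: one bounds $\sum_m|h_{jm}|(1+2^{\min\{j,k\}}|x-2^{-j}m|)^{-R}$ by $\max\{1,2^{(j-k)R}\}\bigl(\eta_{j,Rt}\ast\bigl|\sum_m h_{jm}\chi_{jm}\bigr|^t\bigr)^{1/t}$ with $t$ chosen below $\min\{1,p^-\}$ (resp.\ $\min\{1,p^-,q^-\}$), applies Lemma~\ref{lem:conv-eta} with the exponents $\px/t$, $\qx/t$, and only then takes the power $1/t$ back. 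This is also where the $\sigma$- and $c_{\log}$-terms in \eqref{conv-conditions1}/\eqref{conv-conditions2} really originate: the lemma needs $R>n/t$ (plus a $c_{\log}(1/q)$-type margin in the $B$ case), and the loss $2^{(j-k)R}$ at mismatched scales --- note it occurs in the moment regime $j>k$, not for $k>j$ as in your sketch --- must be absorbed by $2^{-(j-k)(L+n)}$, which is precisely the requirement $L>n(1/t-1)-\alpha_1+\cdots\,$; raising the geometric series over $j$ to the power $r$ only requires the surviving exponent to be positive and contributes no $\sigma$-term. Your concluding limiting argument (uniform bound on partial sums, $\cS'$-convergence from Proposition~\ref{pro:conv} via $\as\hookrightarrow b^{\w}_{\px,\infty}$, Fatou property of the mixed spaces) is sound, though, as the paper stresses, one also needs both the pointwise and the $\cS'$ convergence of the inner sums in order to identify $\varphi_k\ast g$ with $\sum_{j,m}\lambda_{jm}\,\varphi_k\ast[aa]_{jm}$.
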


Comments to the proof, including auxiliary results, are given in Subsection~\ref{sec:proof-decomp1} below.

%
%

\begin{remark}\label{rem:atoms-also}
 The reader may note that the sufficient conditions on the size of $M$ in \eqref{conv-conditions1} and \eqref{conv-conditions2} slightly differ from the corresponding assumptions given in \cite[Remark~3.14]{Kem10}. It seems (private communication from the author) the latter contains some misprints in the calculations mentioned there. Arguing as in Remark~\ref{rem:convergence}, we can see that a similar result to the previous theorem holds with atoms instead of molecules, under the same assumptions (except those on $M$, since it plays no role then). Note that we recover standard assumptions when we deal with the classical spaces $A^s_{p,q}$ with constant exponents, since in that situation we have $\alpha=0$, $\alpha_1=\alpha_2=s$ and $c_{\log}(1/p)=c_{\log}(1/q)=0$.
\end{remark}

The next result shows that compactly supported functions with continuous and bounded derivatives up to a given order belong to the spaces $\A$ if that order is large enough.

\begin{corollary}
Let $K\in\Nz$ and $\psi$ be a $C^K$-function on $\Rn$ with compact support. Then $\psi\in\A$ for any $\w\in\W$ with $\alpha_2<K$ and $p,q\in\PPlog$ (with $p^+,q^+<\infty$ also in the case $A=F$).
\end{corollary}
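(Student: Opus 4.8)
The plan is to view $\psi$ as a single atom (up to normalization) sitting at the coarsest level $j=0$, and then invoke Theorem~\ref{thm:decomp1}. Concretely, since $\psi$ has compact support, there is some $d>1$ and some $m_0\in\Zn$ such that $\supp\psi\subset d\,Q_{0,m_0}$; more comfortably, since only finitely many cubes $Q_{0,m}$ meet $\supp\psi$, one can write $\psi=\sum_{m\in\Zn}\psi_m$ using a smooth partition of unity subordinate to the cubes $d\,Q_{0,m}$, where each $\psi_m\in C^K$ is supported in $d\,Q_{0,m}$ and only finitely many $\psi_m$ are nonzero. Rescaling each $\psi_m$ by a suitable constant $c_0>0$ (depending only on $\psi$, $K$, $d$, $n$) so that $\sup_x|D^\gamma(c_0^{-1}\psi_m)(x)|\le 1=2^{|\gamma|\cdot 0}$ for $0\le|\gamma|\le K$, we obtain $(K,0,d)$-atoms $a_{0,m}:=c_0^{-1}\psi_m$ supported near $Q_{0,m}$. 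We then set $a_{jm}\equiv 0$ for $j\ge 1$, and $\lambda_{0,m}:=c_0$ for the finitely many indices $m$ with $\psi_m\not\equiv 0$, and $\lambda_{jm}:=0$ otherwise.

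Next I would check that $\lambda=(\lambda_{jm})$ lies in $\as$. Since $\lambda$ has only finitely many nonzero entries, all concentrated at $j=0$, the relevant quasi-norm reduces to $\big\|\sum_{m}\lambda_{0,m}w_0(2^{-0}m)\chi_{0,m}\,\big|\,L_\px\big\|$ (a single level, so $\ell_\qx$ plays no role), which is the $L_\px$-quasi-norm of a bounded compactly supported function — in particular finite because $p\in\PPlog$ implies characteristic functions of bounded cubes have finite $L_\px$-quasi-norm (cf.\ \eqref{norm-charact-func-cubes}) and $L_\px$ has the lattice property. Hence $\lambda\in b^\w_{\px,\qx}$, and also $\lambda\in f^\w_{\px,\qx}$ in the $F$-case (with $p^+,q^+<\infty$), by the same finiteness argument together with Corollary~\ref{cor:embedbf} or directly. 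Choosing $K\in\Nz$ with $\alpha_2<K$ as hypothesized, and then fixing $L=0$ and $M>0$ large enough to satisfy \eqref{conv-conditions1} respectively \eqref{conv-conditions2} (note $L=0$ is allowed: these $a_{0,m}$ are $(K,0,d)$-atoms and \eqref{conv-conditions1}, \eqref{conv-conditions2} with $L$ replaced by $0$ can always be met by taking $M$ big, and $L$ plays no role for level-$0$ atoms anyway), the atomic version of Theorem~\ref{thm:decomp1} (see Remark~\ref{rem:atoms-also}) applies.

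By that theorem, $g:=\sum_{j=0}^\infty\sum_{m\in\Zn}\lambda_{jm}a_{jm}=\sum_m c_0\,a_{0,m}=\sum_m\psi_m=\psi$ belongs to $\A$, with $\|\psi\,|\A\|\le c\,\|\lambda\,|\as\|<\infty$. This finishes the proof. The only mildly delicate point is bookkeeping: one must make sure the normalization constant $c_0$ can indeed be chosen uniformly (it can, since there are finitely many $\psi_m$, each $C^K$ with compact support, so $\sup_{m}\sup_{|\gamma|\le K}\|D^\gamma\psi_m\|_\infty<\infty$) and that the partition-of-unity pieces genuinely satisfy the support and derivative bounds of the atom definition after scaling. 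There is no real analytic obstacle here — the work has all been done in Theorem~\ref{thm:decomp1}; the corollary is essentially the observation that a single smooth compactly supported bump is, after harmless rescaling and chopping, a finite atomic sum at scale zero, so it automatically belongs to every space for which the atomic decomposition theorem is available with $K>\alpha_2$.
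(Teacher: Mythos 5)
Your overall strategy is exactly the paper's: realize $\psi$ (after normalization) as level-zero atoms with zero coefficients elsewhere, observe that the coefficient sequence trivially lies in $\as$ because only one level is occupied, and invoke Theorem~\ref{thm:decomp1} in its atomic form (Remark~\ref{rem:atoms-also}) with $K>\alpha_2$. The paper even skips your partition of unity: it simply picks $d>1$ so large that $\supp\psi\subset d\,Q_{00}$ and takes the single atom $a_{00}=\psi/\bar M$ with $\bar M=\max_{|\gamma|\le K}\sup_x|D^\gamma\psi(x)|$; your chopping into finitely many $\psi_m$ is harmless but unnecessary, since the atom definition allows arbitrary $d>1$.

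There is, however, one genuine misstep in how you invoke the theorem: you ``fix $L=0$'' and claim that \eqref{conv-conditions1}, \eqref{conv-conditions2} with $L$ replaced by $0$ ``can always be met by taking $M$ big''. That is false. The condition on $L$ in Theorem~\ref{thm:decomp1} is a lower bound, $L>\sigma_{p^-}-\alpha_1+c_{\log}(1/q)$ in the $B$-case (resp. $L>\sigma_{p^-,q^-}-\alpha_1$ in the $F$-case), which does not involve $M$ and in general fails for $L=0$ (e.g.\ whenever $\alpha_1\le\sigma_{p^-}+c_{\log}(1/q)$, so already for $\alpha_1=0$); enlarging $M$ cannot repair it, and in the atomic version $M$ plays no role at all (Remark~\ref{rem:atoms-also}). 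The correct move, and the one the paper makes, is the opposite: choose $L$ as large as the theorem demands. This costs nothing in your construction, because the theorem only requires the level-zero atoms to be $(K,0,d)$-atoms (no moment conditions are ever imposed there), and your atoms at levels $j\ge1$ are identically zero, hence $(K,L,d)$-atoms for every $L\in\Nz$. With that one-line change your argument is complete and coincides with the paper's proof.
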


\begin{proof}
Let $\displaystyle \bar{M}:=\max_{|\gamma| \le K} \sup_{x\in\Rn} \left|D^\gamma \psi(x)\right|$. Then $\bar{M}$ is finite by the hypotheses on $\psi$. There is nothing to prove if $\psi \equiv 0$, therefore we assume $\psi \not\equiv 0$, in which case $\bar{M}>0$. Since
$$ \sup_{x\in\Rn} \left|D^\gamma \frac{\psi}{\bar{M}}(x)\right| \le 1=2^{|\gamma|\,0}\, , \ \ \ \ 0\le |\gamma|\le K, \ \ \ \text{and} \ \ \ \ \supp \frac{\psi}{\bar{M}} \subset d\,Q_{00}$$
for $d>1$ chosen large enough, we see that $\tfrac{\psi}{\bar{M}}$ is a $(K,0,d)$-atom $a_{00}$. Consider the remaining $a_{j m}$, $j\in\Nz$, $m\in\Zn$, identically zero, so that they are $(K,L,d)$-atoms whatever the $L\in\Nz$ we choose. Define the sequence $\lambda=(\lambda_{j m})$ by taking  $\lambda_{j m}=\bar{M}$ for $j=0$ and $m=0$, and $\lambda_{j m}=0$ otherwise. It is not hard to see that $\lambda\in \as$. Indeed, we have, with $h_j= \sum_{m\in\Zn} \lambda_{j m} \,w_j(2^{-j}m) \chi_{j m}$,
$$\|\lambda\,|\, \bs\| = \| (h_j)_j \,|\, \ell_\qx(L_\px) \|= \bar{M}\, w_0(0)\,\|\chi_{00}\,|L_\px\| =  \bar{M}\, w_0(0)< \infty,$$
since the sequence $(h_j)_j$ has only one non-zero entry, namely $h_0=\bar{M}\, w_0(0)\,\chi_{00}$. The case $a=f$ is similar. Using the fact that $K>\alpha_2$, from Theorem~\ref{thm:decomp1} and Remark~\ref{rem:atoms-also} we conclude that
$$\psi = \sum_{j=0}^\infty \sum_{m\in\Zn} \lambda_{j m} \, a_{j m} \, \in \A\,,$$
which proves the claim.
\end{proof}

In particular, the previous corollary shows that $(K,L,d)$-atoms belong to the space $\A$ for any $p,q\in\PPlog$ (with $p^+,q^+<\infty$ in the case $A=F$) and $\w\in\W$ such that $\alpha_2<K$. Arguing as in the last part of the preceding proof, it is clear that the molecules in Theorem~\ref{thm:decomp1} belong also themselves to the $\A$ considered there.

The next theorem is a kind of reciprocal to Theorem~\ref{thm:decomp1}, now for atoms.

\begin{theorem}\label{thm:decomp2}
Let $K,L\in\Nz$ and $d>1$.
\begin{enumerate}
\item[(a)] For any $f\in\cS'$, there exist $\lambda(f):=(\lambda_{jm}) \subset \mathbb{C}$ and $(K,0,d)$-atoms $a_{0m}\in\cS$, $m\in\Zn$, and $(K,L,d)$-atoms $a_{jm}\in\cS$, $j\in\N$, $m\in\Zn$, such that
\begin{equation}\label{eq:fdecomp}
f=\sum_{j=0}^\infty \sum_{m\in\Zn} \lambda_{jm}\, a_{jm}
\end{equation}
where the inner sum is taken pointwisely and the outer sum converges in $\cS'$.

\item[(b)] If, moreover, $f$ belongs to some space $\A$, for some $\w\in\W$ and $p,q\in\PPlog$ (with the additional restriction $p^+,q^+<\infty$ in the case $A=F$), then both sums above converge in $\cS'$ and
$$
  \|\lambda(f)\,| \as\| \lesssim \|f\,| \A\|.
$$
\end{enumerate}
\end{theorem}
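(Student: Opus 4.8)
The plan is to follow the classical Frazier--Jawerth construction adapted to the variable-exponent $2$-microlocal setting. For part (a), I would start from a suitable admissible system $\{\varphi_j\}$ and a corresponding reproducing (Calder\'on-type) identity $f=\sum_{j\ge 0}\psi_j\ast\varphi_j\ast f$ in $\cS'$, where $\{\psi_j\}$ is a second admissible system chosen so that the $\psi_j$ ($j\ge 1$) have arbitrarily many vanishing moments (enough to exceed the prescribed $L$). Writing $\Phi_j(x):=2^{jn}\Phi(2^jx)$ and localizing, one sets, for $j\in\N$ and $m\in\Zn$,
\begin{equation*}
\lambda_{jm}:=C_K\,2^{-jn}\sup_{y\in Q_{jm}}\bigl|(\varphi_j\ast f)(y)\bigr|,\qquad
a_{jm}(x):=\frac{1}{\lambda_{jm}}\int_{Q_{jm}}\psi_j(x-y)\,(\varphi_j\ast f)(y)\,dy
\end{equation*}
(with the obvious modification for $j=0$, using $\Phi$ and no moment conditions, and setting $a_{jm}\equiv0$ when $\lambda_{jm}=0$). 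One then checks, using $\psi_j\in\cS$, the band-limited nature of $\varphi_j\ast f$, and Bernstein-type inequalities, that each $a_{jm}$ is a genuine $(K,L,d)$-atom supported near $Q_{jm}$ (the support is inherited from $\supp\psi_j$ after rescaling, choosing $d$ large enough; the derivative bounds come from differentiating $\psi_j$ and absorbing $2^{|\gamma|j}$; the moment conditions are inherited from those of $\psi_j$). Since each $\varphi_j\ast f$ is a slowly increasing smooth function, the inner sum converges pointwise (indeed locally uniformly, with all derivatives), giving the regular distribution $\psi_j\ast\varphi_j\ast f$, and the outer sum converges in $\cS'$ by the Calder\'on identity; note all $a_{jm}\in\cS$ because $\psi_j\in\cS$ and $\varphi_j\ast f$ is smooth with compactly supported local pieces.

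For part (b), the task is the quasi-norm estimate $\|\lambda(f)\,|\as\|\lesssim\|f\,|\A\|$. Here I would estimate $\lambda_{jm}$ by a Peetre-type maximal function: for $x\in Q_{jm}$,
\begin{equation*}
|\lambda_{jm}|\,w_j(2^{-j}m)\lesssim \inf_{x\in Q_{jm}} w_j(x)\,(\varphi_j^{\ast,a}f)(x)
\end{equation*}
for a suitable power $a>0$, using \eqref{aux6} from class $\W$ to move the weight from the center to a general point of the cube, and using that the supremum of $|\varphi_j\ast f|$ over $Q_{jm}$ is controlled by the Peetre maximal function $(\varphi_j^{\ast,a}f)(x)=\sup_{y}|(\varphi_j\ast f)(y)|/(1+2^j|x-y|)^a$ at any $x\in Q_{jm}$. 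Consequently
\begin{equation*}
\sum_{m\in\Zn}|\lambda_{jm}|\,w_j(2^{-j}m)\,\chi_{jm}(x)\lesssim w_j(x)\,(\varphi_j^{\ast,a}f)(x),
\end{equation*}
so that $\|\lambda(f)\,|\as\|\lesssim\|(w_j\,\varphi_j^{\ast,a}f)_j\,|\ell_\qx(L_\px)\|$ (resp.\ $L_\px(\ell_\qx)$), which by the Peetre maximal function characterization of $\A$ (\cite[Theorem~3.1 and Corollary~3.2]{AlmC15a}) is equivalent to $\|f\,|\A\|$, provided $a$ is chosen large enough depending on $\alpha$, $p$, $q$ (in the $F$-case one needs $a>n/\min\{p^-,q^-\}+\alpha$ or similar; in the $B$-case $a>n/p^-+\alpha+c_{\log}(1/q)$). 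Once the estimate is in place, convergence of both the inner and outer sums \emph{in} $\cS'$ for $f\in\A$ follows from $\cS\hookrightarrow\A\hookrightarrow\cS'$ together with part (a).

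The main obstacle I expect is the passage from the pointwise bound on $\lambda_{jm}$ to the $\ell_\qx(L_\px)$-quasi-norm estimate in a way that is uniform and correctly handles the interaction between the admissible weight $\w$ (whose $\alpha$-anisotropy forces the extra factor $(1+2^j|x-y|)^\alpha$) and the Peetre exponent $a$, especially in the Besov case with variable $q$, where, as the authors stress, the maximal operator is unavailable and one must rely solely on the convolution inequality Lemma~\ref{lem:conv-eta}(ii). Concretely, one has to verify that $\sup_{Q_{jm}}|\varphi_j\ast f|$ is dominated by $(\eta_{\nu,R}\ast|\varphi_j\ast f|)(x)$-type quantities for $x\in Q_{jm}$ with $R$ large, and that the resulting $R$ is still compatible with the hypotheses of Lemma~\ref{lem:conv-eta}(ii) (i.e.\ $R>n+c_{\log}(1/q)$); this is where the choice of $a$ and the regularity assumptions $p,q\in\PPlog$ are genuinely used. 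A secondary technical point is checking the moment conditions and support localization of the $a_{jm}$ carefully enough that $d$ can be taken equal to the prescribed value $d>1$ (which may require first proving the statement for $d$ large and then a rescaling/splitting argument, or simply choosing the second system $\{\psi_j\}$ with sufficiently concentrated support).
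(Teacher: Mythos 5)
Your strategy is essentially the paper's: a Calder\'on-type reproducing identity pairing a moment-bearing, space-localized kernel with a band-limited one, coefficients defined as suprema of the band-limited convolutions over the dyadic cubes, atoms as normalized localized pieces of the convolution, and part (b) reduced to the Peetre maximal function characterization of \cite{AlmC15a}.

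There is, however, one genuine problem in part (a) as written. You take $\{\psi_j\}$ to be ``a second admissible system'', but admissible systems are band-limited, hence (by Paley--Wiener) $\supp\psi_j=\Rn$ for $\psi_j\not\equiv 0$; then your $a_{jm}(x)=\lambda_{jm}^{-1}\int_{Q_{jm}}\psi_j(x-y)(\varphi_j\ast f)(y)\,dy$ are Schwartz molecules but never $(K,L,d)$-atoms, since $\supp a_{jm}\subset d\,Q_{jm}$ fails for every finite $d$. Your two suggested repairs do not straightforwardly work either: a compactly supported kernel cannot be band-limited, so it is not an admissible system and the reproducing identity has to be re-established for it, while ``prove for large $d$ and rescale/split'' destroys the moment conditions when $L\ge 1$. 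This is exactly what Lemma~\ref{formula} (refining \cite[Lemma~3.6]{Kem10} and \cite{FraJ85}) supplies in the paper: for the given $L$ and \emph{any} $\sigma>0$ it produces $\phi_0,\phi$ supported in $B(0,\sigma)$, with $L$ vanishing moments for $\phi$, paired with band-limited $\psi_0,\psi$ satisfying the Tauberian-type conditions needed for \cite[Theorem~3.1]{AlmC15a}, together with the identity $\hat{\phi}_0\hat{\psi}_0 f+\sum_{j\ge1}\hat{\phi}(2^{-j}\cdot)\,\hat{\psi}(2^{-j}\cdot)f=f$ in $\cS'$; taking $\sigma=\tfrac12(d-1)$ gives atoms supported in $d\,Q_{jm}$ for the prescribed $d>1$, with no largeness needed for $K$, $L$ or $d$. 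Two smaller points: with $L^1$-normalized dilations $\psi_j=2^{jn}\psi(2^j\cdot)$ your coefficient $\lambda_{jm}=C_K\,2^{-jn}\sup_{Q_{jm}}|\varphi_j\ast f|$ carries a spurious factor $2^{-jn}$ (the $2^{jn}$ of the kernel is already cancelled by $|Q_{jm}|=2^{-jn}$, so the correct choice is $\lambda_{jm}\approx\sup_{Q_{jm}}|\varphi_j\ast f|$, as in the paper, otherwise the ``atoms'' violate the normalization \eqref{atom-supp} by $2^{jn}$); and in (b) the $\cS'$-convergence of the inner sums does not follow merely from $\cS\hookrightarrow\A\hookrightarrow\cS'$ --- the paper derives it from the coefficient estimate ($\lambda(f)\in\as\hookrightarrow b^{\w}_{\px,\infty}$) together with the dominated-convergence argument of Step~1 of the proof of Proposition~\ref{pro:conv}, adapted to atoms.
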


\begin{remark}\label{rem:3rem} \hspace{10cm}
\begin{enumerate}

\item[(i)] The choice of $K, L$ and $d$ above is completely independent of the parameters $p, q$ and $\w$. We are claiming that any $f\in\cS'$ can be represented by atoms as in \eqref{eq:fdecomp} for whatever $K,L\in\Nz$ and $d>1$ we choose. In particular, we can take $L=0$ and consequently no moment conditions are required for the atoms. As far as we can check, this interesting fact is never mentioned in the literature. It is also worth pointing out that the atomic representation \eqref{eq:fdecomp} we built is the same for each given $f\in\cS'$, independently of the space $\A$ where $f$ might belong to.

\item[(ii)] By comparison with \eqref{aux4} (and noting that a similar estimate holds for atoms, as mentioned above), we see that when all the hypotheses are met, our choice of atoms and coefficients in the previous theorem indeed satisfies
    $$ \|\lambda\,| \as\| \approx \|f\,| \A\|$$
    and we then say we have an \emph{optimal atomic decomposition} in $\A$ for $f$.
\item[(iii)] Even for the cases covered by former results in \cite[Theorem~3.12, Corollary~5.6]{Kem10}, and even when restricting to constant exponents (see, e.g., \cite[Step~4 on pp. 80--81]{Tri97}), Theorem~\ref{thm:decomp2} above contains more information and better assumptions. In particular, the convergence of the outer sum follows directly from the convenient reproducing formula used in the proof given in Subsection~\ref{sec:proof-decomp2}, and does not require the consideration of big values for $K$ nor $L$.
\end{enumerate}
\end{remark}

Taking into account all the comments made in the previous remark, we find it convenient to write down a complete proof for Theorem~\ref{thm:decomp2} (cf. Subsection~\ref{sec:proof-decomp2}).

\medskip

It is easy to see that the previous theorem can also be read with $(K,L,M)$-molecules (for given $K,L\in\Nz$ and $M>0$) instead of the corresponding atoms. Indeed,  from \eqref{eq:fdecomp} we have
    $$ f=\sum_{j=0}^\infty \sum_{m\in\Zn} \lambda_{jm}\big(1+d\tfrac{\sqrt{n}}{2}\big)^{M}\, \big(1+d\tfrac{\sqrt{n}}{2}\big)^{-M}a_{jm} $$
    where $\big(1+d\tfrac{\sqrt{n}}{2}\big)^{-M}a_{jm}$ are $(K,0,M)$-molecules if $j=0$ and $(K,L,M)$-molecules if $j\in\N$. Moreover, one has
    $$   \big\|\big(\lambda_{jm}\big(1+d\tfrac{\sqrt{n}}{2}\big)^{M}\big)_{j,m}\,| \as\big\| \lesssim \|f\,| \A\| $$
    with the implicit constant still independent of $f$.

Combining the previous results and remarks we get the following representations.

\begin{corollary}[atomic decomposition]
Let $\w\in\W$ and $p,q\in\PPlog$ (with $p^+,q^+<\infty$ in the $F$-case). Let $K,L\in\Nz$ and $d>1$ be such that
\begin{equation*}
K>\alpha_2\ \ \ \text{and} \ \ \ L>\sigma_{p^-}-\alpha_1 +c_{\log}(1/q)
\end{equation*}
in the $B$-case, or
\begin{equation*}
K>\alpha_2 \ \ \ \text{and} \ \ \ L>\sigma_{p^-,q^-}-\alpha_1
\end{equation*}
in the $F$-case. Then $f\in\cS'$ belongs to $\A$ if and only if there exist $\lambda=(\lambda_{jm})\in\as$ and $(K,0,d)$-atoms $a_{0m}$, $m\in\Zn$, and $(K,L,d)$-atoms $a_{jm}$, $j\in\N$, $m\in\Zn$, such that
\begin{equation}\label{eq:fdecomp-atom}
f=\sum_{j=0}^\infty \sum_{m\in\Zn} \lambda_{jm}\, a_{jm} \ \ \ \ \text{(convergence in $\cS'$)}.
\end{equation}
Moreover,
$$
\inf \, \| \lambda \,|\, \as \|
$$
defines a quasi-norm in $\A$ which is equivalent to $\| \cdot \,| \A\|$, where the infimum runs over all $\lambda\in\as$ that can be used in \eqref{eq:fdecomp-atom} (for fixed $f\in\A$) for all possible atoms with the properties described above.
\end{corollary}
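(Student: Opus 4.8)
The plan is to assemble the stated \emph{atomic decomposition} corollary from Theorem~\ref{thm:decomp1}, Theorem~\ref{thm:decomp2}, and the intervening remarks, so that essentially nothing new has to be proved --- the work is to check that the hypotheses line up and that the infimum functional really is an equivalent quasi-norm. First I would establish the ``if'' direction: suppose $f\in\cS'$ admits a representation \eqref{eq:fdecomp-atom} with $\lambda\in\as$ and with $(K,0,d)$-atoms $a_{0m}$ and $(K,L,d)$-atoms $a_{jm}$, where $K>\alpha_2$ and $L>\sigma_{p^-}-\alpha_1+c_{\log}(1/q)$ (respectively $L>\sigma_{p^-,q^-}-\alpha_1$ in the $F$-case). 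Given such $K,L$, pick any $M>0$ large enough to satisfy the last inequality in \eqref{conv-conditions1} (respectively \eqref{conv-conditions2}); this is possible since that condition only requires $M$ to exceed an explicit quantity depending on $L$ and the fixed data. By Remark~\ref{atom-is-molecule} the rescaled functions $(1+\tfrac{d\sqrt n}{2})^{-M}a_{jm}$ are $(K,0,M)$- resp.\ $(K,L,M)$-molecules, so Theorem~\ref{thm:decomp1} (in the form noted in Remark~\ref{rem:atoms-also}, applied directly to atoms) gives $f\in\A$ together with $\|f\,|\A\|\lesssim\|\lambda\,|\as\|$, the constant being independent of $f$ and of the chosen atoms. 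Taking the infimum over all admissible $\lambda$ yields $\|f\,|\A\|\lesssim\inf\|\lambda\,|\as\|$.

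Next the ``only if'' direction: suppose $f\in\A$. Apply Theorem~\ref{thm:decomp2}(a) with the prescribed $K,L,d$ to obtain a representation $f=\sum_{j,m}\lambda_{jm}(f)\,a_{jm}$ with $(K,0,d)$- resp.\ $(K,L,d)$-atoms and convergence of the outer sum in $\cS'$; since $f$ moreover lies in $\A$, part (b) of that theorem tells us the sum converges in $\cS'$ in the stated sense \emph{and} $\|\lambda(f)\,|\as\|\lesssim\|f\,|\A\|$. In particular $\lambda(f)\in\as$, so this $\lambda(f)$ is itself an admissible competitor in the infimum, whence $\inf\|\lambda\,|\as\|\le\|\lambda(f)\,|\as\|\lesssim\|f\,|\A\|$. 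Combining the two directions gives that $f\in\cS'$ lies in $\A$ precisely when such an atomic representation exists, and that $\inf\|\lambda\,|\as\|\approx\|f\,|\A\|$.

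It then remains to note that $N(f):=\inf\|\lambda\,|\as\|$ is genuinely a quasi-norm on $\A$. Positive homogeneity is clear (scaling $f$ by $c$ scales every admissible $\lambda$ by $c$, since the atoms may be kept fixed). The quasi-triangle inequality follows by concatenating near-optimal representations of $f_1$ and $f_2$: given $\varepsilon>0$, choose $\lambda^{(1)},\lambda^{(2)}$ with the respective atom families so that $\|\lambda^{(i)}\,|\as\|\le N(f_i)+\varepsilon$; the union of these two atom systems (re-indexed, using that the index set $\Nz\times\Zn$ absorbs a finite reshuffling) together with the concatenated coefficient sequence represents $f_1+f_2$, and the quasi-norm of the concatenation is controlled by $C(\|\lambda^{(1)}\,|\as\|+\|\lambda^{(2)}\,|\as\|)$ using the quasi-triangle inequality of $\|\cdot\,|\as\|$ and the lattice property. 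Finally $N(f)=0$ forces $\|f\,|\A\|=0$ by the equivalence just proved, hence $f=0$ in $\A$. I would remark that the genuine mathematical content is entirely in Theorems~\ref{thm:decomp1} and~\ref{thm:decomp2}; the only mild subtlety here is bookkeeping --- verifying that a single choice of $M$ works for all atoms once $K$ and $L$ are fixed, and handling the re-indexing in the quasi-triangle step --- and that is where I would be most careful, though I do not expect any real obstacle.
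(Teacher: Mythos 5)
Your proposal is correct and follows exactly the route the paper intends: the "if" direction via Theorem~\ref{thm:decomp1} in its atomic form (Remarks~\ref{rem:convergence} and \ref{rem:atoms-also}, with $M$ chosen freely since it plays no role for atoms), the "only if" direction via Theorem~\ref{thm:decomp2}(a)--(b), and the equivalence $\inf\|\lambda\,|\as\|\approx\|f\,|\A\|$ as in Remark~\ref{rem:3rem}(ii). The paper gives no separate proof beyond "combining the previous results and remarks," and your bookkeeping (choice of $M$, quasi-norm verification by concatenating near-optimal representations) fills in precisely what is implicit there.
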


\begin{corollary}[molecular decomposition]
Let $\w\in\W$ and $p,q\in\PPlog$ (with $p^+,q^+<\infty$ in the $F$-case). Let $K,L\in\Nz$ and $M>0$ satisfying \eqref{conv-conditions1}
in the $B$-case, or \eqref{conv-conditions2} in the $F$-case. Then $f\in\cS'$ belongs to $\A$ if and only if there exist $\lambda\in\as$ and $(K,0,M)$-molecules $[aa]_{0m}$, $m\in\Zn$, and $(K,L,M)$-molecules $[aa]_{jm}$, $j\in\N$, $m\in\Zn$, such that
\begin{equation}\label{eq:fdecomp-mol}
f=\sum_{j=0}^\infty \sum_{m\in\Zn} \lambda_{jm}\, [aa]_{jm} \ \ \ \ \text{(convergence in $\cS'$)}.
\end{equation}
Moreover,
$$
\inf \, \| \lambda \,|\, \as \|
$$
defines a quasi-norm in $\A$ which is equivalent to $\| \cdot \,| \A\|$, where the infimum runs over all $\lambda\in\as$ that can be used in \eqref{eq:fdecomp-mol} (for fixed $f\in\A$) for all possible molecules of the type described above.
\end{corollary}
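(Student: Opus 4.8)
The plan is to deduce this corollary by assembling the two main decomposition theorems already established, so that essentially nothing new needs to be proved. For the sufficiency (``if'' direction) I will invoke Theorem~\ref{thm:decomp1}: if $\lambda=(\lambda_{jm})\in\as$ and the $[aa]_{jm}$ are molecules of the stated type, then the hypotheses \eqref{conv-conditions1} (in the $B$-case) or \eqref{conv-conditions2} (in the $F$-case) are exactly what that theorem demands, so the series in \eqref{eq:fdecomp-mol} converges in $\cS'$ to some $g\in\A$ with $\|g\,|\A\|\le c\,\|\lambda\,|\as\|$, the constant $c$ being independent of $\lambda$ and of the molecules. Hence any $f\in\cS'$ admitting such a representation lies in $\A$, and, taking the infimum over all admissible $\lambda$ in the inequality $\|f\,|\A\|\le c\,\|\lambda\,|\as\|$, one obtains $\|f\,|\A\|\lesssim\inf\|\lambda\,|\as\|$.

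For the necessity (``only if'' direction) I will use Theorem~\ref{thm:decomp2} in its molecular reformulation, as explained in the paragraph following Remark~\ref{rem:3rem}. Given $f\in\A$, I first fix $K,L\in\Nz$ as prescribed in the present statement --- this is legitimate because in Theorem~\ref{thm:decomp2} the values of $K$ and $L$ are at our disposal --- and let $M>0$ be the number appearing in \eqref{conv-conditions1}/\eqref{conv-conditions2}. Theorem~\ref{thm:decomp2}(a),(b) produces $(K,0,d)$-atoms $a_{0m}$ and $(K,L,d)$-atoms $a_{jm}$, $j\in\N$, together with coefficients $\lambda(f)=(\lambda_{jm})\in\as$ satisfying $f=\sum_{j}\sum_{m}\lambda_{jm}a_{jm}$ with both sums converging in $\cS'$ (the outer one because $f\in\A$) and $\|\lambda(f)\,|\as\|\lesssim\|f\,|\A\|$. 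By Remark~\ref{atom-is-molecule} the functions $\big(1+\tfrac{d\sqrt n}{2}\big)^{-M}a_{jm}$ are $(K,0,M)$- respectively $(K,L,M)$-molecules, so absorbing the constant into the coefficients turns \eqref{eq:fdecomp} into a molecular decomposition of $f$ of the required type, with coefficient sequence $\mu=\big(\lambda_{jm}(1+\tfrac{d\sqrt n}{2})^{M}\big)_{j,m}\in\as$ and $\|\mu\,|\as\|\lesssim\|f\,|\A\|$. In particular the family of admissible $\lambda$ is nonempty and $\inf\|\lambda\,|\as\|\le\|\mu\,|\as\|\lesssim\|f\,|\A\|$.

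Combining the two one-sided estimates yields $\inf\|\lambda\,|\as\|\approx\|f\,|\A\|$ for every $f\in\A$, which in particular shows the infimum is finite on $\A$ and strictly positive unless $f=0$. It then remains to note that $f\mapsto\inf\|\lambda\,|\as\|$ is a quasi-norm: positive homogeneity follows by rescaling a representation of $f$, and the quasi-triangle inequality (with an appropriate constant coming from $\as$) follows by concatenating a representation of $f_1$ and one of $f_2$ into a representation of $f_1+f_2$; alternatively, equivalence to the quasi-norm $\|\cdot\,|\A\|$ already forces the quasi-norm axioms. I do not expect a genuine obstacle here, since the argument is pure assembly; the only point requiring a moment's care is checking that the parameter conditions on $K,L,M$ imposed by Theorem~\ref{thm:decomp1} and by the molecular version of Theorem~\ref{thm:decomp2} are mutually compatible, and this is automatic because both statements quote precisely \eqref{conv-conditions1}/\eqref{conv-conditions2}.
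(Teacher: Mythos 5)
Your proposal is correct and follows essentially the same route as the paper, which obtains this corollary precisely by combining Theorem~\ref{thm:decomp1} (for the ``if'' direction and the upper bound by $\inf\|\lambda\,|\as\|$) with Theorem~\ref{thm:decomp2} in its molecular reformulation via Remark~\ref{atom-is-molecule} (for the ``only if'' direction and the reverse estimate). Your additional checks on the compatibility of the parameters $K,L,M$ and on the quasi-norm axioms for the infimum expression are consistent with the paper's intended assembly argument.
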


\begin{remark}
We note that atomic representations for the particular spaces $B^{s(\cdot)}_{\px,\qx}$ were derived in \cite{Drihem12}. Furthermore, during the very final part of the preparation of this article, we became aware of the quite recent papers \cite{YZYuan15a,YZYuan15b} where the corresponding authors have obtained smooth molecular and atomic characterizations for generalized spaces of Triebel-Lizorkin and Besov type denoted by $F_{\px,\qx}^{s(\cdot), \phi}$ and $B_{\px,\qx}^{s(\cdot), \phi}$, respectively. However, although quite general, such scales do not include all the cases covered by the scales we are dealing with in our paper.
\end{remark}

\section{On the type of convergence and applications}\label{sec:converge-in-space}

This section aims to complement the results given in the previous one. In this part we discuss the type of convergence involving the atomic and molecular decompositions above. The main result, Theorem~\ref{thm:conv-space}, shows that the convergence implied in Theorem~\ref{thm:decomp1} holds not only in $\cS'$ but also in the spaces $\A$ themselves when the integrability parameters are bounded. As we can see below, this is an important fact having relevant implications and typically it is not mentioned in the standard literature even for classical spaces with constant exponents. We note that this problem was first explicitly studied for the classical Besov and Triebel-Lizorkin spaces in \cite{Cae11}.

We formulate the theorem below in terms of molecules, but clearly a corresponding result in terms of atoms holds as well.

\begin{theorem}\label{thm:conv-space}
Under the hypotheses of Theorem~\ref{thm:decomp1}, if $q^+<\infty$ then the convergence of the outer sum in \eqref{eq:conv} holds also in the spaces $\A$. Additionally, if also $p^+<\infty$ then the overall convergence in \eqref{eq:conv} to $g$ holds in $\A$. In fact, if both exponents $p$ and $q$ are bounded we even have
\begin{equation} \label{aux5}
g=\sum_{(j,m)\in\Nz\times\Zn} \lambda_{j m} \, [aa]_{j m}
\end{equation}
with summability in $\A$.
\end{theorem}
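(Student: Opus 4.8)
The plan is to reduce everything to the already-established estimate \eqref{aux4} applied to suitable tails of the molecular series. Write $g_T := \sum_{j=0}^{T} \sum_{m\in\Zn} \lambda_{jm}\,[aa]_{jm}$ and, for the tail, $g - g_T = \sum_{j=T+1}^{\infty} \sum_{m\in\Zn} \lambda_{jm}\,[aa]_{jm}$. The point is that $g - g_T$ is itself a molecular series of exactly the same type as in Theorem~\ref{thm:decomp1}, now built from the coefficient subsequence $\lambda^{(T)} := (\lambda_{jm})_{j>T,\,m\in\Zn}$ (with the first-level molecules $[aa]_{0m}$ simply absent, which is harmless since the conditions on $M$ do not change and the $j=0$ row only ever helped). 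Applying \eqref{aux4} to this series gives
\[
\|g - g_T\,|\,\A\| \le c\,\big\|\lambda^{(T)}\,|\,\as\big\| = c\,\big\|(\lambda_{jm})_{j>T}\,|\,\as\big\|,
\]
with $c$ independent of $T$. So the whole matter comes down to showing that the quasi-norm of the coefficient tail in $\as$ tends to $0$ as $T\to\infty$. This is precisely where $q^+<\infty$ enters: in the $B$-case we invoke Lemma~\ref{lem:lpqconverge} with the sequence $\big(\sum_{m}\lambda_{jm} w_j(2^{-j}m)\chi_{jm}\big)_j \in \ell_\qx(L_\px)$, which gives that its tails go to zero in $\ell_\qx(L_\px)$; in the $F$-case one argues analogously for $L_\px(\ell_\qx)$, using dominated convergence at the level of the semimodular $\varrho_{L_\px(\ell_\qx)}$ together with the estimate \eqref{Lp-norm-mod}-type bound for that modular (the convergence $\sum_{j>T}\to 0$ of the modular tail follows since the full modular is finite, $q^+<\infty$ makes it a genuine sum of nonnegative terms, and $p^+<\infty$ is already assumed in the $F$-case). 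Either way, $\|(\lambda_{jm})_{j>T}\,|\,\as\| \to 0$, hence $g_T \to g$ in $\A$; this proves the first assertion.

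For the second assertion we must control, in addition, the \emph{inner} sums: set $g_{T,N} := \sum_{j=0}^{T}\sum_{|m|\le N}\lambda_{jm}[aa]_{jm}$. For fixed $T$, the difference $g_T - g_{T,N} = \sum_{j=0}^{T}\sum_{|m|>N}\lambda_{jm}[aa]_{jm}$ is again a molecular series of the admissible type with coefficients $(\lambda_{jm})_{j\le T,\,|m|>N}$, so by \eqref{aux4}
\[
\|g_T - g_{T,N}\,|\,\A\| \le c\,\big\|(\lambda_{jm})_{j\le T,\,|m|>N}\,|\,\as\big\|.
\]
Now for fixed $T$ only finitely many rows are present, and within each row the characteristic functions $\chi_{jm}$ have disjoint supports; boundedness of $p$ (and of $q$) lets us apply the dominated-convergence argument at the modular level once more — the relevant modular of the truncated piece is a tail of a convergent sum of nonnegative terms — to conclude $\|(\lambda_{jm})_{j\le T,\,|m|>N}\,|\,\as\| \to 0$ as $N\to\infty$. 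Combined with the first part via a standard quasi-triangle estimate, this shows $g_{T,N}\to g$ as $T,N\to\infty$ in the appropriate order, which is the stated convergence in $\A$ of the overall (iterated) sum. Finally, unconditional summability \eqref{aux5} follows the same way: for any finite $\Lambda\subset\Nz\times\Zn$, writing $g - \sum_{(j,m)\in\Lambda}\lambda_{jm}[aa]_{jm}$ as a molecular series over the complement $\Lambda^c$ and applying \eqref{aux4} bounds its $\A$-quasi-norm by $c\,\|(\lambda_{jm})_{(j,m)\notin\Lambda}\,|\,\as\|$; since $p^+,q^+<\infty$ the net of these coefficient tails over finite $\Lambda$ tends to $0$ (this is the net-version of Lemmas~\ref{lem:lqLp-norm-mod}--\ref{lem:lpqconverge}, again exploiting that both modulars are honest sums/integrals of nonnegative terms), giving summability in $\A$ to $g$.

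The main obstacle I expect is the bookkeeping needed to justify that every truncation of the molecular series is \emph{again} a molecular series to which Theorem~\ref{thm:decomp1} applies \emph{with a constant uniform in the truncation parameters} — in particular checking that dropping the $j=0$ molecules, or keeping only $j\le T$, does not spoil the hypotheses \eqref{conv-conditions1}/\eqref{conv-conditions2}, and that the constant $c$ in \eqref{aux4} is genuinely the same for all these sub-families (it is, since it depends only on $\w$, $p$, $q$, $K$, $L$, $M$). The second, more technical, obstacle is the convergence-to-zero of the coefficient tails in $\as$: Lemma~\ref{lem:lpqconverge} handles the outer-index tail directly in the $B$-case, but the $F$-case and the inner-index tails require the elementary but slightly delicate observation that, because $q^+<\infty$ (and $p^+<\infty$ in the $F$-case), the relevant semimodular is a countable sum (resp.\ integral) of nonnegative terms, so tails and finite-complement nets vanish by the usual dominated/monotone convergence argument, after which Lemma~\ref{lem:lqLp-norm-mod} (and \eqref{Lp-norm-mod}) converts this back to quasi-norm convergence. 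Everything else is routine.
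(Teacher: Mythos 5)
Your proposal is correct and follows essentially the same route as the paper's proof: apply the uniform estimate \eqref{aux4} of Theorem~\ref{thm:decomp1} to coefficient tails, make the outer ($j$-)tails vanish via Lemma~\ref{lem:lpqconverge} (using $q^+<\infty$), and treat the inner ($m$-)tails and the finite-set net at the semimodular level by dominated convergence in $L_\px$ (using $p^+<\infty$), which is exactly the paper's Step 2. The only difference is presentational: the ``net-version'' convergence of coefficient tails that you assert is carried out in the paper in detail through homogeneity, the unit ball property, the splitting at $j_0$ and $m_0$, and the lattice property for arbitrary finite sets containing $P_0$.
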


\begin{proof}
We give the proof for the $B$ space only. The $F$ counterpart can be done using similar arguments (with the necessary modifications).

\emph{Step 1}: We assume that $q^+<\infty$ and prove the convergence of the outer sum in \eqref{eq:conv} in $\B$. For each $T\in\N$, let $\lambda_T$ be the sequence formed from $\lambda$ by replacing the terms where $j\in\{0,1,\ldots,T\}$ by zero, and $\lambda^T$ be the sequence formed also from $\lambda$ but by replacing the terms where $j>T$ by zero. Then we still have $\lambda_T,\lambda^T \in \bs$. By Theorem~\ref{thm:decomp1} we have
$$
\B \ni g^T := \sum_{j=0}^T \sum_{m\in\Zn} \lambda_{j m}\, [aa]_{j m}
$$
and
$$
\B \ni g_T := \sum_{j=T+1}^\infty \sum_{m\in\Zn} \lambda_{j m}\, [aa]_{j m}
$$
(with convergence in $\cS'$). Moreover, the same theorem yields
\begin{equation*}
\|g-g^T\,|\B\| = \|g_T\,|\B\| \lesssim \|\lambda_T\,|\bs\|.
\end{equation*}
Since $q^+<\infty$ and $\lambda\in\bs$, from Lemma~\ref{lem:lpqconverge} we conclude that the quasi-norm of the right-hand side tends to zero as $T\to \infty$. We have then shown that
\begin{equation*}
g= \sum_{j=0}^\infty \sum_{m\in\Zn} \lambda_{j m}\, [aa]_{j m}
\end{equation*}
with convergence of the outer sum in $\B$ (and where the inner sum can either be interpreted pointwisely or in $\cS'$).

\emph{Step 2}: In this step we assume that both $p$ and $q$ are bounded and then prove the summability of \eqref{aux5} in $\B$. Let $\varepsilon>0$ be arbitrary small. We want to show that there exists a finite part $P_0$ of $\Nz\times \Zn$ such that
\begin{equation*}
\left\| g- \sum_{(j ,m)\in P} \lambda_{j m}\,[aa]_{j m}\;\big|\, \B\right\| \le \varepsilon,
\end{equation*}
for all finite parts $P$ of $\Nz\times \Zn$ containing $P_0$. Consider
$$P_0:= \{(j,m)\in\Nz\times \Zn:\, j\le j_0,\,\;|m|\le m_0\}$$
with $j_0\in\Nz$ and $m_0\in\Zn$ to be chosen (depending on $\varepsilon$). For finite $P \subset \Nz\times \Zn$, let $\lambda^{\overline{P}}$ be the sequence given by $\lambda_{j m}^{\overline{P}} = \lambda_{jm}$ if $(j,m)\in P$ and $\lambda_{j m}^{\overline{P}} = 0$ otherwise. We can write
\begin{eqnarray*}
\left\| g- \sum_{(j,m)\in P} \lambda_{j m}\,[aa]_{j m}\;\big|\, \B\right\| & = & \left\| \sum_{(j,m)\notin P} \lambda_{jm}\, [aa]_{jm}   \;\big|\, \B\right\|\\
& = & \left\| \sum_{j=0}^\infty \sum_{m\in\Zn} \lambda_{j m}^P\,[aa]_{j m}\;\big|\, \B\right\| \\
& \le & c\,\left\| \lambda^P\;|\, \bs\right\|,
\end{eqnarray*}
naturally taking $\lambda_{j m}^{P} = \lambda_{jm}$ if $(j,m)\notin P$ and $\lambda_{j m}^{P} = 0$ otherwise, with the inequality following from Theorem~\ref{thm:decomp1}. Note that the sums above converge in $\cS'$ (cf. Proposition~\ref{pro:conv}).
By homogeneity, showing that the last quasi-norm above is at most $\varepsilon$ it is equivalent to show that $\left\|\tfrac{c}{\varepsilon} \lambda^P\;|\, \bs\right\|\le 1$. On the other hand, by the unit ball property, this is equivalent to prove that the corresponding semimodular is less than or equal to one.  We may assume $\varepsilon \le c$ in the sequel. We have
\begin{eqnarray}
\varrho_{\ell_\qx(L_\px)} \!\left( \!\tfrac{c}{\varepsilon} \!\left( \sum_{m\in\Zn} \!\lambda_{j m}^P\, w_j(2^{-j}m)\, \chi_{j m}\right)_j \!\right)\! & = & \! \big(\tfrac{\varepsilon}{c}\big)^{-q^+} \!\sum_{j =0}^\infty \left\|  \left|\sum_{m\in\Zn} \lambda_{j m}^P\, w_j(2^{-j}m)\, \chi_{j m}\right|^{\qx} \!\big| L_{\frac{\px}{\qx}}\right\| \nonumber \\
& \le & \!\big(\tfrac{\varepsilon}{c}\big)^{-q^+}\! \left[ \sum_{j =0}^{j_0} \left\|  \cdots \big| L_{\frac{\px}{\qx}}\right\| + \sum_{j =j_0+1}^{\infty} \left\|  \cdots \big| L_{\frac{\px}{\qx}}\right\| \right].\label{firstsum}
\end{eqnarray}
As in the first step we can see that the second sum goes to zero when $j_0 \to \infty$. Hence we can find $j_0\in\N$ such that this sum is smaller than $\tfrac{1}{2}\big(\tfrac{\varepsilon}{c}\big)^{q^+}$. As regards the first sum, we have
\begin{eqnarray*}
\lefteqn{\sum_{j =0}^{j_0} \Big\|  \Big|\sum_{m\in\Zn} \lambda_{j m}^P\, w_j(2^{-j}m)\, \chi_{j m}\Big|^{\qx} \,\big| L_{\frac{\px}{\qx}}\Big\|  \le \sum_{j =0}^{j_0} \left\|  \sum_{|m|>m_0} |h_{j m}|^{\qx} \, \big| L_{\frac{\px}{\qx}}\right\| } \\
& \!\!\!\!\le & \sum_{j =0}^{j_0} \max \left\{ \varrho_{\frac{\px}{\qx}} \left( \sum_{|m|>m_0} |h_{j m}|^{\qx} \right)^{\frac{q^-}{p^+}}, \varrho_{\frac{\px}{\qx}} \left( \sum_{|m|>m_0} |h_{j m}|^{\qx} \right)^{\frac{q^+}{p^-}} \right\}
\end{eqnarray*}
using the notation $ h_{j m}:= \lambda_{j m}\, w_j(2^{-j}m)\, \chi_{j m}$ and estimate \eqref{Lp-norm-mod} in the last inequality (note that $\tfrac{p}{q}$ is bounded). Observe that
\begin{equation*}
\varrho_{\frac{\px}{\qx}} \left( \sum_{|m|>m_0} |h_{j m}|^{\qx} \right) = \int_{\Rn} \Big[\sum_{|m|>m_0} |h_{jm}(x)|\Big]^{p(x)}dx.
\end{equation*}
Let
$$
g_{j,J}(x):= \sum_{i=0}^{J} \sum_{i\le |m| < i+1} |h_{j m}(x)|  \,, \ \ J\in\Nz, \ \ \ \text{and} \ \ \ g_j(x):= \sum_{m\in\Zn} |h_{j m}(x)|.
$$
Then $0\le g_{j,J} \le g_j$ and $g_{j,J} \to g_j$ pointwisely as $J\to \infty$. Moreover, the hypothesis $\lambda\in\bs$ implies, in particular, that $g_j\in L_{\px}$. Since $p^+<\infty$ we can apply the Lebesgue dominated convergence theorem (cf. \cite[Theorem~2.62]{C-UF13}) and conclude that $g_{j,J} \to g_j$ in $L_\px$ as $J\to \infty$. This implies
$$
\int_{\Rn} |g_{j,J}(x)-g_{j}(x)|^{p(x)}dx \rightarrow 0 \ \ \ \text{as} \ \ \ J \to \infty,
$$
so that
$$
\int_{\Rn} \Big[ \sum_{|m|>J} |h_{j m}(x)|\Big]^{p(x)}dx \rightarrow 0 \ \ \ \text{as} \ \ \ J \to \infty.
$$
Therefore, for each $j\in\{0,1,\ldots,j_0\}$, there exists $m(j)\in\N$ such that
$$
\max \left\{ \varrho_{\frac{\px}{\qx}} \left( \sum_{|m|>m(j)} |h_{j m}|^{\qx} \right)^{\frac{q^-}{p^+}}, \varrho_{\frac{\px}{\qx}} \left( \sum_{|m|>m(j)} |h_{j m}|^{\qx} \right)^{\frac{q^+}{p^-}} \right\} < \tfrac{1}{2(j_0+1)}\big(\tfrac{\varepsilon}{c}\big)^{q^+}.
$$
Taking $m_0:=\max\{m(0),m(1),\ldots,m(j_0)\}$, we conclude that the first sum in \eqref{firstsum} is dominated by $\sum_{j=0}^{j_0} \tfrac{1}{2(j_0+1)}\big(\tfrac{\varepsilon}{c}\big)^{q^+} = \tfrac{1}{2}\big(\tfrac{\varepsilon}{c}\big)^{q^+}$. Putting everything together, we have proved our claim.
\end{proof}


An interesting application of the previous theorem is the denseness of nice classes in the general $2$-microlocal spaces with variable integrability. We can derive simple proofs of such properties by taking advantage of the convergence in the spaces themselves. We explore this in the next corollary and remark.



\begin{corollary}\label{cor:dense}
If $\w\in\W$ and $p,q\in\PPlog$ with $\max\{p^+,q^+\}<\infty$, then $\cS$ is dense in $\A$.
\end{corollary}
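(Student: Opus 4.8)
The plan is to derive the denseness of $\cS$ in $\A$ directly from Theorem~\ref{thm:conv-space} (and the preceding atomic representation results), exploiting the fact that when $p^+,q^+<\infty$ the atomic series converges not just in $\cS'$ but in the space $\A$ itself. First I would take an arbitrary $f\in\A$. Since $\max\{p^+,q^+\}<\infty$, we may invoke Theorem~\ref{thm:decomp2}(b): there exist a coefficient sequence $\lambda=\lambda(f)\in\as$ and atoms $a_{jm}\in\cS$ (note that the atoms produced in Theorem~\ref{thm:decomp2} are themselves Schwartz functions, since they arise from a reproducing formula built on $\cS$-functions) such that
\begin{equation*}
f=\sum_{j=0}^\infty \sum_{m\in\Zn} \lambda_{jm}\, a_{jm}.
\end{equation*}
By the atom-into-molecule observation (Remark~\ref{atom-is-molecule}), the functions $(1+d\tfrac{\sqrt n}{2})^{-M} a_{jm}$ are molecules of the type required in Theorem~\ref{thm:decomp1}, for a suitable $M$ depending only on $\w$, $p$ and $q$; choosing $K$ and $L$ large enough that the hypotheses of Theorem~\ref{thm:decomp1} (hence of Theorem~\ref{thm:conv-space}) are met — which is possible because $L$ plays no role in the atoms themselves and we may always enlarge $K$, $L$ as in Remark~\ref{rem:atoms-also} — we are exactly in the situation of Theorem~\ref{thm:conv-space} with $p^+,q^+<\infty$.

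Next I would apply Theorem~\ref{thm:conv-space}: under the stated hypotheses, the overall convergence in \eqref{eq:conv} to $g=f$ holds in $\A$, in fact with summability in $\A$ over $(j,m)\in\Nz\times\Zn$. In particular the partial sums
\begin{equation*}
f_N:=\sum_{j=0}^{N}\sum_{|m|\le N}\lambda_{jm}\, a_{jm}
\end{equation*}
converge to $f$ in $\A$ as $N\to\infty$. Each $f_N$ is a finite linear combination of Schwartz functions, hence $f_N\in\cS$. Therefore $f$ is the limit in $\A$ of a sequence of elements of $\cS$, which proves that $\cS$ is dense in $\A$. One should also record that $\cS\hookrightarrow\A$ (stated in the excerpt for $p,q\in\PPlog$, $\w\in\W$, with $p^+,q^+<\infty$ in the $F$-case), so that it makes sense to speak of $\cS$ as a subset of $\A$ in the first place.

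The only genuinely delicate point is the bookkeeping needed to be sure that Theorem~\ref{thm:conv-space} is actually applicable to the specific atomic decomposition furnished by Theorem~\ref{thm:decomp2}: one must check that $K$ and $L$ in that decomposition can be taken large enough to satisfy \eqref{conv-conditions1} or \eqref{conv-conditions2}, and that the associated molecules obey the $M$-condition there. This is handled by Remark~\ref{rem:3rem}(i), which stresses that in Theorem~\ref{thm:decomp2} the parameters $K,L,d$ are at our disposal independently of $p,q,\w$, together with Remark~\ref{rem:atoms-also} and Remark~\ref{rem:convergence}, which transfer the molecular statements to atoms (the constant $(1+d\tfrac{\sqrt n}{2})^{-M}$ being harmless and absorbable into the coefficients). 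Beyond this essentially routine matching of hypotheses, there is no obstacle: the substance of the argument is entirely contained in Theorem~\ref{thm:conv-space}, and the corollary is little more than an immediate consequence once the parameters are lined up correctly.
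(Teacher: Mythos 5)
Your proposal is correct and follows essentially the same route as the paper: decompose $f$ via Theorem~\ref{thm:decomp2} into Schwartz atoms with $K,L$ chosen to meet \eqref{conv-conditions1} or \eqref{conv-conditions2}, then invoke Theorem~\ref{thm:conv-space} (in its atomic form) to get summability in $\A$ and approximate by finite partial sums, which lie in $\cS$. The only cosmetic difference is that the paper passes from summability to a reindexed series over $\N$, while you use rectangular partial sums, which is equally valid since summability gives convergence along any exhausting sequence of finite index sets.
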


\begin{proof}
For fixed $p,q$ and $\w$, consider $f\in\A$ and apply Theorem~\ref{thm:decomp2} to decompose $f$ into a linear combination of Schwartz atoms as
\begin{equation*}
f=\sum_{\nu=0}^\infty \sum_{m\in\Zn} \lambda_{\nu m}\, a_{\nu m} \ \ \ \ \text{(convergence in $\cS'$)}.
\end{equation*}
So here $a_{0m}\in\cS$, $m\in\Zn$, and $a_{\nu m}\in\cS$, $\nu\in\N$, $m\in\Zn$, are $(K,0,d)$-atoms and $(K,L,d)$-atoms, respectively, and $\lambda=(\lambda_{\nu m})_{\nu,m} \in \as$. Moreover, $K,L\in\Nz$ and $d>1$ are at our disposal. If we have been careful enough to consider $K$ and $L$ according to the conditions \eqref{conv-conditions1} in the case $A=B$, or according to \eqref{conv-conditions2} in the case $A=F$, then from Theorem~\ref{thm:conv-space} (and the comments preceding it) we see that not only the convergence of the double sum above holds in $\A$, but we even have
\[
f=\sum_{(\nu,m)\in\Nz\times\Zn} \lambda_{\nu m} \, a_{\nu m} \ \ \ \ \text{ with summability in $\A$.}
\]
Since $\Nz\times \Zn$ is infinitely countable, we can write
\[
f=\sum_{j=1}^\infty \lambda_{j} \, a_{j} \ \ \ \ \text{(convergence in $\A$)}
\]
after reindexing $\lambda_{\nu m}$ and $a_{\nu m}$. This means that
\[
f=\lim_{J\to\infty}\sum_{j=1}^J \lambda_{j} \, a_{j} \ \ \ \text{in $\A$}.
\]
Since the partial sums belong to $\cS$ the proof is complete.
\end{proof}

We note that the denseness of $\cS$ in $\A$ (for bounded $p,q$, and constant $q$ in the $B$ case) was recently referred in \cite{GonMN14} and \cite{MouNS13}, with the suggestion that the proof be done following classical arguments as in \cite[Theorem~2.3.3]{Tri83}.

\begin{remark}
We can also say, in the case $q^+<\infty$ and $p^+=\infty$ (then we are dealing with $B$ spaces), that each $f\in\B$ can be approximated in $\B$ by the sequence $(f_\nu)_\nu$ of quite regular functions given by $f_\nu=\sum_{m\in\Zn} \lambda_{\nu m}\,a_{\nu m}$, with atoms and coefficients as in the proof of Corollary~\ref{cor:dense}, for suitable large $K,L\in\Nz$. It is easy to see that such functions $f_\nu$ have bounded and uniformly continuous derivatives of all orders (see the specific construction given in Subsection~\ref{sec:proof-decomp2}).
\end{remark}

Another useful application of Theorem~\ref{thm:conv-space}, again combined with Theorem~\ref{thm:decomp2}, is the following result concerning the possibility of the atomic representations to converge pointwisely:

\begin{corollary}
Let $\w\in\mathcal{W}^{\alpha}_{\alpha_1,\alpha_2}$, $p,q\in\PPlog$, $K,L\in\Nz$ and $d>1$ be such that $K>\max\{\alpha_2,0\}$  and 
$$
L>\sigma_{p^-}-\alpha_1 + c_{\log}(1/q) \ \ \text{in the $B$ case} \ \ \ \text{or} \ \ \ L>\sigma_{p^-,q^-}-\alpha_1 \ \ \text{in the $F$ case}.
$$
Then, given any $f\in\cS$, the atoms $a_{jm}$ and coefficients $\lambda_{jm}$ given in Theorem~\ref{thm:decomp2} for such $f$ give an optimal representation
$$
f=\sum_{(j,m)\in\Nz\times\Zn} \lambda_{j m} \, a_{j m}
$$
in $\A$ with the convergence holding pointwisely (and even in a uniform way).
\end{corollary}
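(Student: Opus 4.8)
The plan is to feed the concrete atomic decomposition of Theorem~\ref{thm:decomp2} into Theorem~\ref{thm:conv-space} (in its atomic version, cf.\ Remark~\ref{rem:atoms-also}) and then to upgrade the convergence from $\cS'$ to pointwise and uniform, using that for a Schwartz function the coefficients produced by that construction decay rapidly in $j$. First I would note that $p,q\in\PPlog$ and $\w\in\W$ (with $p^+,q^+<\infty$ automatic in the $F$-case, since $\F$ is only defined then), so that $\cS\hookrightarrow\A$ and $f\in\A$. Applying Theorem~\ref{thm:decomp2} to this $f$ yields coefficients $\lambda=(\lambda_{jm})\in\as$ and atoms $a_{jm}\in\cS$ --- a $(K,0,d)$-atom when $j=0$ and a $(K,L,d)$-atom when $j\ge1$ --- with $f=\sum_{j}\sum_{m}\lambda_{jm}a_{jm}$, the inner sums taken pointwise and the outer sum (hence, by Proposition~\ref{pro:conv} and $\as\hookrightarrow b^\w_{\px,\infty}$ from Corollary~\ref{cor:embedbf}, also the unordered sum over $\Nz\times\Zn$) converging in $\cS'$, and with $\|\lambda\,|\as\|\lesssim\|f\,|\A\|$.

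Since $K>\max\{\alpha_2,0\}\ge\alpha_2$ and $L$ meets the lower bound in \eqref{conv-conditions1} (resp.\ \eqref{conv-conditions2}) --- the requirements on $M$ being void for atoms --- Theorem~\ref{thm:decomp1} in its atomic form gives $\|f\,|\A\|\lesssim\|\lambda\,|\as\|$ as well, so $\|\lambda\,|\as\|\approx\|f\,|\A\|$ and the representation is optimal in the sense of Remark~\ref{rem:3rem}; moreover the hypotheses of Theorem~\ref{thm:conv-space} are now met, and that theorem provides convergence, and even summability, of the series in $\A$ whenever the exponents are bounded (in particular throughout the $F$-case). It remains to obtain the pointwise and uniform convergence, which is the genuinely new content.

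For this I would return to the reproducing-formula construction behind Theorem~\ref{thm:decomp2} (Subsection~\ref{sec:proof-decomp2}), where the coefficient $\lambda_{jm}$ is controlled, up to a fixed power $2^{cj}$, by a supremum over a dilate of $Q_{jm}$ of $\varphi_j\ast f$ for an admissible system $\{\varphi_j\}$; thus $\widehat{\varphi_j\ast f}=\hat\varphi(2^{-j}\cdot)\hat f$ is supported, for $j\ge1$, in $\{2^{j-1}\le|\xi|\le2^{j+1}\}$, where $\hat f$ and all its derivatives are $O(2^{-jN})$ for every $N$. Integrating by parts in the Fourier inversion formula then gives, for all $M,N>0$,
\[
|(\varphi_j\ast f)(y)|\;\lesssim_{M,N}\;2^{-jN}\,(1+|y|)^{-M},\qquad y\in\Rn,
\]
the single term $j=0$ being harmless since $\varphi_0\ast f\in\cS$; hence $|\lambda_{jm}|\lesssim_{N}2^{-jN}$ uniformly in $m$, for every $N$. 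Using $|a_{jm}|\le1$, $\supp a_{jm}\subset dQ_{jm}$, and that for a fixed $j$ and a fixed $x$ at most $(\lfloor d\rfloor+1)^n$ indices $m$ contribute, one gets
\[
\sum_{(j,m)\in\Nz\times\Zn}|\lambda_{jm}\,a_{jm}(x)|\;\le\;\sum_{j\ge0}(\lfloor d\rfloor+1)^n\max_{m\in\Zn}|\lambda_{jm}|\;\lesssim\;(\lfloor d\rfloor+1)^n\sum_{j\ge0}2^{-jN}\;<\;\infty
\]
uniformly in $x\in\Rn$. Therefore $\sum_{(j,m)}\lambda_{jm}a_{jm}$ converges absolutely and uniformly on $\Rn$ to a bounded continuous function $\widetilde g$; in particular the outer partial sums $\sum_{j\le J}\sum_{m}\lambda_{jm}a_{jm}$ converge to $\widetilde g$ uniformly, hence in $\cS'$, and since they converge to $f$ in $\cS'$ as well, uniqueness forces $\widetilde g=f$. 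This gives $f=\sum_{(j,m)\in\Nz\times\Zn}\lambda_{jm}a_{jm}$ with pointwise and uniform convergence; and because $|D^\gamma a_{jm}|\le2^{|\gamma|j}$ for $|\gamma|\le K$, choosing $N$ larger shows the corresponding convergence also for all derivatives up to order $K$.

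The step I expect to be the main obstacle is the rapid-decay estimate $|\lambda_{jm}|\lesssim_{N}2^{-jN}$: it is the only place where one must open up the explicit construction of $(\lambda,a)$ in Subsection~\ref{sec:proof-decomp2} and use $f\in\cS$ through the spectral localization of $\varphi_j\ast f$. Once that estimate is in hand, the absolute and uniform summability of $\sum_{(j,m)}\lambda_{jm}a_{jm}$, and the identification of its sum with $f$ by uniqueness in $\cS'$, are routine.
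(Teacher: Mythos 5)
Your proposal is correct, but the route to the pointwise/uniform convergence is genuinely different from the paper's. You reopen the construction of Subsection~\ref{sec:proof-decomp2} and exploit the spectral localization of $\psi_j\ast f$ for $f\in\cS$: since $\widehat{\psi_j\ast f}$ is supported in an annulus of radius $\approx 2^j$ where $\hat f$ decays rapidly, the coefficients satisfy $|\lambda_{jm}|\lesssim_N 2^{-jN}$ uniformly in $m$, and together with $|a_{jm}|\le 1$ and the bounded overlap of the supports $d\,Q_{jm}$ (at most $(\lfloor d\rfloor+1)^n$ cubes per level meet a given point) this yields absolute and uniform convergence of the unordered sum; the limit is identified with $f$ through the $\cS'$-convergence already supplied by Theorem~\ref{thm:decomp2} and Proposition~\ref{pro:conv}. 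The paper never reopens the construction: it uses the universality of the decomposition (the same $\lambda_{jm}$, $a_{jm}$ serve every space) to view it inside an auxiliary classical space $B^{s}_{t,1}$ with $s=n/t$ and $t$ large, where Theorem~\ref{thm:conv-space} gives summability in $B^{s}_{t,1}$, and then the classical embeddings $B^{n/t}_{t,1}\hookrightarrow B^{0}_{\infty,1}\hookrightarrow\mathcal{C}$ upgrade this to uniform convergence; this also explains the hypothesis $K>\max\{\alpha_2,0\}$, which is needed there to have $K>s>0$. Your approach buys more quantitative information (rapid decay of the coefficients, absolute convergence, and, as you note, uniform convergence of derivatives up to order $K$) and in fact only uses $K>\alpha_2$, not $K>0$; the paper's approach avoids any Fourier-analytic computation, working purely at the level of the stated theorems plus classical embeddings. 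The optimality part, $\|\lambda\,|\as\|\approx\|f\,|\A\|$ via Theorems~\ref{thm:decomp2} and \ref{thm:decomp1}, is handled the same way in both arguments.
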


\begin{proof}
\emph{Step 1}: By Theorem~\ref{thm:conv-space} and the comment preceding it, any sum
\begin{equation}\label{aux8}
\sum_{(j,m)\in\Nz\times\Zn} \lambda_{j m} \, a_{j m}
\end{equation}
with $(K,0,d)$-atoms $a_{0m}$, $m\in\Zn$, $(K,L,d)$-atoms $a_{jm}$, $j\in\N$, $m\in\Zn$, and coefficients $(\lambda_{jm})\in b^s_{t,1}$, with $0<t<\infty$, $K>s$, $L>\sigma_t-s$, $d>1$, holds with convergence in $B^s_{t,1}$. By classical Sobolev embeddings for Besov spaces (see, e.g., \cite[Theorem~2.7.1(i)]{Tri83}), $B^s_{t,1} \hookrightarrow B^0_{\infty,1}$ if $s=\tfrac{n}{t}$, where $B^0_{\infty,1} \hookrightarrow \mathcal{C}$ (see \cite[Proposition~2.5.7]{Tri83}), the $\mathcal{C}$ standing for the space of all (complex-valued) bounded and uniformly continuous functions on $\Rn$, endowed with the sup-norm. Putting everything together, if $s=\tfrac{n}{t}$ then the convergence in \eqref{aux8} is not only pointwise but even uniform. Noticing that $\sigma_t=0$ if $t\ge 1$ and that we can take, in the framework above, $t$ finite as big as one wants and $s$ positive correspondingly close to $0$ (by the relation $s=\tfrac{n}{t}$), we see that given $K>0$, $L\ge 0$, $d>1$, it is always possible to choose $s$ and $t$ so that the requirements above are satisfied. Assume that such a choice has been made.

\emph{Step 2}: Since $\cS$ is contained in any Besov space, using Theorem~\ref{thm:decomp2} with $B^s_{t,1} $ instead of $\A$, we can say, given $f\in\cS$, that the construction in \eqref{eq:fdecomp} indeed satisfies $(\lambda_{jm})\in b^s_{t,1}$, and therefore the convergence implied there holds also in the sense of \eqref{aux8}, in particular in a pointwise and even uniform way. On the other hand, if one wants to think about $f$ as an element of $\A$, then the same coefficients $(\lambda_{jm})$ also belong to $\as$ and, in case we have been careful enough to have chosen previously $K>\alpha_2$, and $L>\sigma_{p^-}-\alpha_1 + c_{\log}(1/q)$ in the $B$ case or  $L>\sigma_{p^-,q^-}-\alpha_1$ in the $F$ case, we have, by Theorem~\ref{thm:decomp1}, that the obtained \eqref{eq:fdecomp} is indeed an optimal atomic representation of $f$ in $\A$ -- see also Remark~\ref{rem:3rem} (ii).
\end{proof}


\section{Applications to Sobolev type embeddings}\label{sec:Sobolev-embed}

We have already observed that the proof of Proposition~\ref{pro:conv} (given in Subsection~\ref{sec:proof-pro}) takes advantage of a Sobolev type embedding for the sequence spaces  $\bs$. Although the particular case $q=\infty$ is enough in that part, we take the opportunity to give a general result for arbitrary $q$. In this way, we will be able to obtain a Sobolev embedding for the $2$-microlocal Besov spaces $\B$ later on.

\begin{theorem}\label{thm:discrete-Sobolev-embed}
Let $\textbf{w}^\mathbf{1}\in\mathcal{W}^{\alpha}_{\alpha_1,\alpha_2}$, $p_0,p_1\in\PPlog$ and $\tfrac{1}{q}$ be
locally $\log$-Hölder continuous. If $p_0\le p_1$ and
$$
\frac{w^0_j(x)}{w^1_j(x)} = 2^{j\left(\frac{n}{p_0(x)}-\frac{n}{p_1(x)}\right)}, \ \ x\in\Rn, \ \ j\in\Nz,
$$
then
$$
b^{\mathbf{w}^\mathbf{0}}_{\pzx,\qx}  \hookrightarrow b^{\mathbf{w}^\mathbf{1}}_{\pumx,\qx}.
$$
\end{theorem}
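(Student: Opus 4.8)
The plan is to establish the embedding by estimating the defining quasi-norms directly, working at the level of the modular of $\ell_\qx(L_\px)$. Fix $\lambda=(\lambda_{jm})$; we may assume $\lambda_{jm}\ge 0$. For $i\in\{0,1\}$ and $j\in\Nz$ put
\[
F^i_j:=\sum_{m\in\Zn}\lambda_{jm}\,w^i_j(2^{-j}m)\,\chi_{jm},
\]
so that the two sides of the claimed inclusion are $\|(F^1_j)_j\,|\,\ell_\qx(L_{\pumx})\|$ and $\|(F^0_j)_j\,|\,\ell_\qx(L_{\pzx})\|$. By homogeneity and the unit ball property it suffices to show that $\varrho_{\ell_\qx(L_{\pzx})}\big((F^0_j)_j\big)\le 1$ forces $\varrho_{\ell_\qx(L_{\pumx})}\big((F^1_j)_j\big)$ to be bounded by a constant depending only on the structural data, the passage back to the quasi-norm being then provided by Lemma~\ref{lem:lqLp-norm-mod} (the degenerate situation $q\equiv\infty$ being handled directly via the iterated structure \eqref{iterated}). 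Since by \eqref{def:lpqmod} both modulars are sums over $j$ of inner infima that only involve $F^i_j$, and since $\varrho_{\ell_\qx(L_{\pzx})}((F^0_j)_j)\le 1$ makes the $j$-th inner infimum $s_j$ on the $p_0$-side satisfy $\sum_j s_j\le 1$, everything reduces to a uniform-in-$j$ level estimate.

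Fix $j$. The cubes $Q_{jm}$, $m\in\Zn$, are pairwise essentially disjoint with $|Q_{jm}|=2^{-jn}\le 1$, so each $F^i_j$ is a step function and its modular splits into a sum of integrals over the $Q_{jm}$. The decisive computation is carried out cube by cube. Evaluating the weight hypothesis at the centre $2^{-j}m$ gives
\[
w^1_j(2^{-j}m)=w^0_j(2^{-j}m)\,2^{-jn/p_0(2^{-j}m)}\,2^{jn/p_1(2^{-j}m)}.
\]
Using the local $\log$-H\"older continuity of $1/p_0$, $1/p_1$ and $1/q$ — which, as in the mechanism behind \eqref{norm-charact-func-cubes}, lets one replace $p_0(x)$, $p_1(x)$ and $q(x)$ by their values at $2^{-j}m$ inside the relevant integrals up to constants — and writing $p_i^m:=p_i(2^{-j}m)$, the contribution of $Q_{jm}$ to $\varrho_{\pzx}(F^0_j/s^{1/\qx})$ is comparable to $\beta_m:=2^{-jn}\big(\lambda_{jm}w^0_j(2^{-j}m)\,s^{-1/q(2^{-j}m)}\big)^{p_0^m}$, whereas the contribution of $Q_{jm}$ to $\varrho_{\pumx}(F^1_j/s^{1/\qx})$ is comparable to $\beta_m^{\,p_1^m/p_0^m}$: the extra powers of $2^{jn}$ created by the weight relation cancel precisely against $|Q_{jm}|=2^{-jn}$.

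Now $p_0\le p_1$ enters, through the elementary fact that $p_1^m/p_0^m\ge 1$, hence $t^{\,p_1^m/p_0^m}\le t$ for $0\le t\le 1$. Normalizing $\beta_m$ so that $\sum_m\beta_m\le\varrho_{\pzx}(F^0_j/s^{1/\qx})\le 1$ (absorbing, in the comparison above, the lower-bound constant on the $p_0$-side into the definition of $\beta_m$), each $\beta_m\le 1$, so $\sum_m\beta_m^{\,p_1^m/p_0^m}\le\sum_m\beta_m\le 1$, and therefore $\varrho_{\pumx}(F^1_j/s^{1/\qx})\lesssim 1$ uniformly in $j$, for every $s\ge s_j$. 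Controlling in this way the inner infima on the $p_1$-side by those on the $p_0$-side and summing over $j$, one gets $\varrho_{\ell_\qx(L_{\pumx})}((F^1_j)_j)\lesssim\sum_j s_j\le 1$ up to a constant, and Lemma~\ref{lem:lqLp-norm-mod} then yields $\|(F^1_j)_j\,|\,\ell_\qx(L_{\pumx})\|\lesssim\|(F^0_j)_j\,|\,\ell_\qx(L_{\pzx})\|$, which is the asserted embedding.

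I expect the real difficulty to sit in the cube-by-cube step: one must ensure that the replacement of the variable exponents by their centre values, and the constants it produces, are uniform in $j$, and this has to hold also when $p_1$ and $q$ are large (only $p_i\in\PPlog$ and $1/q\in C^{\log}_\loc$ are assumed, so they may be unbounded) and when the local amplitude $\lambda_{jm}w^0_j(2^{-j}m)\,s^{-1/q(2^{-j}m)}$ is extreme relative to the scale $2^{-j}$; the standard remedy is to split the family of cubes according to whether this amplitude lies above or below the reciprocal of $\|\chi_{jm}\,|\,L_{\pzx}\|$ and to use the appropriate one-sided sharpenings of \eqref{norm-charact-func-cubes} on the two subfamilies. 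The final passage from the modular bound to the quasi-norm bound needs the (minor) care noted above, since $\ell_\qx(L_\px)$ is only a semimodular space when $q$ is unbounded, while everything else — the exact cancellation of the $2^{jn}$ factors and the monotonicity $t^{r}\le t$ for $r\ge 1$, $t\in[0,1]$ — is routine.
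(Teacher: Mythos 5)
Your outer reduction (pass to the modular of $\ell_\qx(L_{\px})$, prove a level-wise Nikolskii-type estimate uniformly in $j$, sum the inner infima, and return to the quasi-norm via the unit ball property and Lemma~\ref{lem:lqLp-norm-mod}) is exactly the skeleton the paper uses. But the core of the proof is precisely the level-wise estimate, and there your argument has a genuine gap. The cube-by-cube comparison you assert -- that the contribution of $Q_{jm}$ to $\varrho_{\pzx}\big(F^0_j/s^{1/\qx}\big)$ is comparable to $2^{-jn}\big(\lambda_{jm}w^0_j(2^{-j}m)s^{-1/q(2^{-j}m)}\big)^{p_0^m}$, and likewise on the $p_1$-side -- is false with constants uniform in $j,m$, in the amplitude and in $s$. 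The log-H\"older hypotheses control quantities of the form $A^{1/p(x)}$ with $|\log A|\lesssim j$ (this is the mechanism behind \eqref{norm-charact-func-cubes}); they do \emph{not} let you freeze the exponent in $t^{p_0(x)}$ or $s^{-p_1(x)/q(x)}$ for arbitrary $t$ and $s$: if $t\sim 2^{j^2}$, the ratio $t^{p_0(x)}/t^{p_0(2^{-j}m)}$ can be of size $2^{c\,p_0^2\,j}$ even under the log-H\"older condition, and when $p_1$ or $q$ is unbounded (only $1/p_1,1/q\in C^{\log}_{\loc}$ is assumed) the oscillation $|p_1(x)-p_1(2^{-j}m)|$ is not small at all. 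Your proposed remedy (splitting cubes by amplitude against $\|\chi_{jm}|L_{\pzx}\|^{-1}$) is only gestured at, and in any case it does not touch the second, independent source of trouble: the factor $s^{-1/q(\cdot)}$ can only be frozen on a cube of side $2^{-j}$ when $|\log s|\lesssim j$, and the inner infima $s_j$ on the $p_0$-side can be arbitrarily small (even zero). This is exactly why the paper's generalized Nikolskii inequality (Proposition~\ref{pro:nikolskii}) is \emph{not} the clean level-wise bound you aim for: it carries an additive error $2^{-j}$ and is proved only under the side condition that the right-hand side infimum is at most one, which forces $\delta_j\in[2^{-j},1+2^{-j}]$ and thereby makes $\delta_j^{-1/q(\cdot)}$ controllable via the local log-H\"older continuity of $1/q$; the errors are then summable and produce the harmless factor $3^{1/q^-}c_0^{-1}$.

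The paper also avoids exponent-freezing of amplitudes altogether: instead of comparing frozen modulars cube by cube, it derives from the $p_0$-modular bound a pointwise ($L_\infty$) Bernstein--Nikolskii estimate for the step function, by dominating $\chi_{jm}\lesssim\eta_{j,R}\ast\chi_{jm}$ (see \eqref{aux7}), moving $\delta_j^{-r/q(x)}v_j(x)^r$ inside the convolution (this is where \eqref{aux6}, the log-H\"older continuity of $1/p_1$ and of $1/q$, and the constraint $\delta_j\ge 2^{-j}$ are used), and applying H\"older's inequality with exponent $p_0/r>1$ together with \eqref{Lp-norm-mod}; the comparison of the two modulars is then the pointwise monotonicity $a^{p_1(x)}\le a^{p_0(x)}$ for $0\le a\le 1$, which is the rigorous counterpart of your $t^{p_1^m/p_0^m}\le t$ step. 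So the monotonicity idea is right, but to make it usable you need the uniform sup-bound (or some substitute), not a frozen-exponent comparability that fails in the extreme regimes; supplying that estimate, with the additive-error device for small $s$ and the case $p_1=\infty$ or $q^+=\infty$ treated properly, is the actual content of the theorem and is missing from your proposal.
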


Notice that the assumptions in the theorem imply that $\textbf{w}^\mathbf{0}$ is also an admissible weight sequence.

If $q$ is constant (situation already discussed in \cite[Proposition~3.9]{Kem10}), we can easily see that the embedding above follows from the Nikolskii type inequality
\begin{equation}\label{nikolskii}
\left\| \sum_{m\in\Zn} \lambda_{jm} \,w_j(\cdot)\,\chi_{jm} \,\big|\,L_{\pumx} \right\| \le c\,
\left\| \sum_{m\in\Zn} \lambda_{jm} \,w_j(\cdot)\,2^{j(\frac{n}{\pzx}-\frac{n}{\pumx})}\,\chi_{jm} \,\big|\,L_{\pzx} \right\|,
\end{equation}
where $c>0$ does not depend on $j\in\Nz$ nor on $\lambda=(\lambda_{jm})$.

The proof of \eqref{nikolskii} can be done by adapting the arguments from \cite[Lemma~6.3]{AlmH10}  (see also \cite[Lemma~2.9]{MouNS13}).

When $q$ is variable the situation is more complicated since an additional error term appears in the inequality. In the sequel we will see how to deal with
the general case. Note that we can assume here $q^-<\infty$ since otherwise it can be treated as the constant exponent $q=\infty$.

\begin{proposition}\label{pro:nikolskii}
Let $p_0,p_1\in\PPlog$ with $p_0\le p_1$, and $\tfrac{1}{q}$ be locally $\log$-Hölder continuous with $q^-<\infty$. Let also $(w_j)_j$ be a sequence satisfying \eqref{aux6} for some $\alpha \ge 0$. Then there exists $c_0\in(0,1]$ such that
\begin{eqnarray*}
\lefteqn{\inf\left\{\mu_j>0: \, \varrho_{\pumx}\left( \frac{c_0\sum_{m\in\Zn} \lambda_{jm} \,w_j\,\chi_{jm}}{\mu_j^{\frac1{\qx}}}\right)\le 1 \right\}}\\
& \!\!\!\!\! \le & \inf\left\{\mu_j>0: \, \varrho_{\pzx}\left( \frac{\sum_{m\in\Zn} \lambda_{jm} \,w_j\,2^{j(\frac{n}{\pzx}-\frac{n}{\pumx})}\,\chi_{jm}}{\mu_j^{\frac1{\qx}}}\right)\le 1 \right\} \, + \,2^{-j}\,,
\end{eqnarray*}
for all $j\in\Nz$ and all $(\lambda_{jm}) \subset \C$, provided the infimum on the right-hand side is at most one.
\end{proposition}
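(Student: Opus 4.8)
The plan is to reduce the estimate to a fixed scale $j$ and work directly with the semimodulars, exploiting the pointwise formula
\[
\varrho_{\pumx}\!\left( \sum_{m} \lambda_{jm} w_j \chi_{jm}/\mu_j^{1/\qx} \right) = \int_{\Rn} \Big[ \sum_m |\lambda_{jm}| w_j(x) 2^{-j n /q(x)}\mu_j^{-1/q(x)} \chi_{jm}(x)\Big]^{p_1(x)}\!\!dx
\]
(using that the cubes $Q_{jm}$ are essentially disjoint, so only one $m$ contributes at each $x$). First I would denote by $\mu_j^{(1)}$ the right-hand infimum and assume $\mu_j^{(1)}\le 1$; the goal is to show that with $\nu_j := \mu_j^{(1)} + 2^{-j}$ and a suitable constant $c_0\in(0,1]$ one has $\varrho_{\pumx}\big(c_0 \sum_m \lambda_{jm} w_j \chi_{jm}/\nu_j^{1/\qx}\big) \le 1$, which by definition of the infimum gives the claimed inequality. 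By homogeneity in $\lambda$ I may normalise so that the semimodular defining $\mu_j^{(1)}$ is exactly $1$ (or $\le 1$ with $\mu_j^{(1)}=\nu_j - 2^{-j}$), i.e.
\[
\int_{\Rn} \Big[ \sum_m |\lambda_{jm}| w_j(x) 2^{j(n/p_0(x)-n/p_1(x))} (\mu_j^{(1)})^{-1/q(x)} \chi_{jm}(x)\Big]^{p_0(x)}\!\!dx \le 1.
\]

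The heart of the argument is a \emph{local Nikolskii / Jensen-type inequality on each cube $Q_{jm}$}: on $Q_{jm}$ the exponents $p_0,p_1,q$ are almost constant (by local $\log$-Hölder continuity, with oscillation controlled by $c_{\log}/\log(e+2^j)$, which is exactly the source of the $2^{-j}$ error term), and the weight $w_j$ varies by a factor $(1+2^j|x-y|)^\alpha \le (1+\sqrt n)^\alpha$ across $Q_{jm}$ by \eqref{aux6}. So on $Q_{jm}$ one compares $\big(|\lambda_{jm}| w_j(x)\big)^{p_1(x)}$ with $\big(|\lambda_{jm}| w_j(x) 2^{jn(1/p_0(x)-1/p_1(x))}\big)^{p_0(x)}$; the gain factor $2^{jn(1/p_0-1/p_1)}$ raised to the $p_0$ power is $2^{jn(p_0/p_1)(\text{something})}$ and, crucially, $|Q_{jm}| = 2^{-jn}$, so integrating the left integrand over $Q_{jm}$ produces the $2^{-jn}$ that converts the $L_{p_1}$-type quantity into the $L_{p_0}$-type quantity, up to constants depending only on $n,\alpha,p_0^\pm,p_1^\pm$ and on $c_{\log}$. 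I would carry this out by splitting, for each $x\in Q_{jm}$, into the cases $|\lambda_{jm}| w_j(x) 2^{jn(1/p_0(x)-1/p_1(x))}(\mu_j^{(1)})^{-1/q(x)}\le 1$ and $>1$; in the first (small) case the $p_1$-power is dominated by the $p_0$-power pointwise (since the base is $\le 1$ and $p_1\ge p_0$) after absorbing the discrepancy between the $\mu_j$-powers into the $2^{-j}$ term, and in the second (large) case one uses that raising a number $\ge 1$ to the smaller exponent $p_0$ only decreases it, together with the scaling factor, to again dominate by the $p_0$-integrand times $2^{-jn}$.

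After these pointwise comparisons, summing over $m$ (the cubes tile $\Rn$ with bounded overlap) and integrating gives
\[
\int_{\Rn} \Big[ \sum_m |\lambda_{jm}| w_j(x) 2^{-jn/q(x)} \chi_{jm}(x)\Big]^{p_1(x)}\!\!dx \;\lesssim\; \int_{\Rn} \Big[ \sum_m |\lambda_{jm}| w_j(x) 2^{j(n/p_0(x)-n/p_1(x))} 2^{-jn/q(x)} \chi_{jm}(x)\Big]^{p_0(x)}\!\!dx \;+\; (\text{lower order}),
\]
and dividing through by the appropriate power of $\mu_j^{(1)}$ (legitimate since $\mu_j^{(1)}\le 1$, so negative powers of it only help) plus choosing $c_0$ to absorb the implicit constant, one concludes that the semimodular on the left with parameter $\nu_j^{1/\qx}$ is $\le 1$; then by definition of the defining infimum, $\inf\{\mu_j : \varrho_{\pumx}(c_0\sum\cdots/\mu_j^{1/\qx})\le 1\} \le \nu_j = \mu_j^{(1)} + 2^{-j}$, which is exactly the assertion. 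The main obstacle I anticipate is the careful bookkeeping of the exponent oscillation: making precise that replacing $q(x)$, $p_0(x)$, $p_1(x)$ by their values at the center $2^{-j}m$ costs at most a factor that, after raising to bounded powers, is $1+O(2^{-j})$ uniformly in $m$ — this is where one needs the standard "$|Q|^{\mp 1/p(x)} \approx |Q|^{\mp 1/p(y)}$ for $x,y\in Q$, $|Q|\le 1$" type estimates (as in \eqref{norm-charact-func-cubes}) and the argument of \cite[Lemma~6.3]{AlmH10}, adapted to keep the error additive rather than multiplicative so that it can be put into the $2^{-j}$ slot rather than the constant $c_0$.
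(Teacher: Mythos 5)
Your overall plan (fix $j$, pass to the semimodulars, exploit the near-constancy of $p_0,p_1,q$ and of $w_j$ on each cube $Q_{jm}$, and prove a local Nikolskii-type estimate) is in the spirit of the paper's proof, but the decisive step is missing and your case analysis does not supply it. In your ``large'' case, where $s_{jm}(x):=|\lambda_{jm}|\,w_j(x)\,2^{jn(1/p_0(x)-1/p_1(x))}\,\nu_j^{-1/q(x)}>1$, the pointwise inequality you need goes the wrong way: the base of the left integrand is $t=s_{jm}\,2^{-jn(1/p_0-1/p_1)}$, so $t^{p_1(x)}=s_{jm}^{p_1(x)}\,2^{-jn(p_1(x)/p_0(x)-1)}$, and this is $\lesssim s_{jm}^{p_0(x)}$ only if $s_{jm}\lesssim 2^{jn/p_0(x)}$; ``raising a number $\ge 1$ to the smaller exponent decreases it'' gives $s_{jm}^{p_0}\le s_{jm}^{p_1}$, i.e.\ the reverse of what is needed, and the factor $|Q_{jm}|=2^{-jn}$ appears on both sides when you integrate over the cube, so it yields no gain by itself. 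The missing ingredient is precisely a local $L_\infty$ bound of the form $c_0\,|\lambda_{jm}|\,w_j(x)\,2^{-jn/p_1(x)}\,\nu_j^{-1/q(x)}\le 1$ on $Q_{jm}$, which has to be extracted from the hypothesis that the right-hand semimodular (with the shifted normaliser) is at most one; this is the actual Nikolskii inequality and is the heart of the matter. The paper obtains it by inserting $\chi_{jm}\lesssim\eta_{j,R}\ast\chi_{jm}$ (inequality \eqref{aux7}) and applying H\"older's inequality in $L_{\pzx/r}$ with $r\in(0,p_0^-)$; one could also derive it cube by cube, but it must be proved, and once it is available no case splitting is needed: the base of the left integrand is then $\le 1$ pointwise and $p_1\ge p_0$ gives the comparison of the two modulars in one line, exactly as in the paper.

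Two further points. First, the role of the additive $2^{-j}$ is not to absorb an additive $O(2^{-j})$ error coming from freezing the exponents at cube centres; it serves to guarantee that the normaliser $\delta_j$ (right-hand quantity plus $2^{-j}$) satisfies $2^{-j}\le\delta_j\le 1+2^{-j}$ (the upper bound is where the hypothesis ``infimum at most one'' enters), and only then is $\delta_j^{-1/q(x)}\approx\delta_j^{-1/q(y)}$ for $x,y\in Q_{jm}$, by the local $\log$-H\"older continuity of $1/q$ — a comparison you cannot avoid, since the factor $\delta_j^{-1/q}$ must be moved across the cube (or across the convolution). In your scheme the right-hand side is normalised by $\mu_j^{(1)}$ itself, which may be arbitrarily small, and then $(\mu_j^{(1)})^{-1/q(x)}$ and $(\mu_j^{(1)})^{-1/q(y)}$ are not uniformly comparable; you should run the whole argument with $\nu_j=\mu_j^{(1)}+2^{-j}$ on both sides (the right-hand modular with $\nu_j$ is still $\le1$ by monotonicity), and all oscillation costs of $p_0,p_1,q,w_j$ then go multiplicatively into $c_0$, not additively into the $2^{-j}$ slot. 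Second, your opening identity for the semimodular contains a spurious factor $2^{-jn/q(x)}$: on $Q_{jm}$ the integrand of $\varrho_{\pumx}$ applied to $c_0\sum_m\lambda_{jm}w_j\chi_{jm}/\mu_j^{1/\qx}$ is simply $\big(c_0|\lambda_{jm}|\,w_j(x)\,\mu_j^{-1/q(x)}\big)^{p_1(x)}$; carrying the extra factor through, raised to $p_1(x)$ on one side and to $p_0(x)$ on the other, changes the inequality you are actually proving.
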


Using the unit ball property, the above generalized Nikolskii's inequality and the estimate from Lemma~\ref{lem:lqLp-norm-mod} combined with  scaling arguments, it is not hard to get the inequality
$$
\| \lambda \,|\, b^{\mathbf{w}^\mathbf{1}}_{\pumx,\qx} \| \le 3^{1/q^-}c_0^{-1} \, \| \lambda \,|\, b^{\mathbf{w}^\mathbf{0}}_{\pzx,\qx} \|
$$
($c_0$ is the constant from Proposition~\ref{pro:nikolskii}), from which the embedding in Theorem~\ref{thm:discrete-Sobolev-embed} follows.

For the proof of the above proposition we need the following  technical lemma. It is very easy to prove, so that we omit details.
\begin{lemma}
For any $R>0$ we have
\begin{equation}\label{aux7}
\chi_{jm}(x) \lesssim \left(\eta_{j,R} \ast \chi_{jm}\right)(x)
\end{equation}
with the implicit constant independent of $x\in\Rn$, $j\in\Nz$ and $m\in\Zn$.
\end{lemma}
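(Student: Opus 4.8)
The inequality \eqref{aux7} is an elementary pointwise estimate, so I would only sketch it. First recall that $\chi_{jm}$ is the characteristic function of the cube $Q_{jm}$ centered at $2^{-j}m$ with side length $2^{-j}$, so $\chi_{jm}(x)$ is nonzero only when $|x-2^{-j}m|\le \tfrac{\sqrt n}{2}\,2^{-j}$, in particular when $2^j|x-2^{-j}m|\le \tfrac{\sqrt n}{2}$. The plan is to compute, for such $x$,
\begin{equation*}
\left(\eta_{j,R}\ast \chi_{jm}\right)(x) = \int_{Q_{jm}} \frac{2^{jn}}{(1+2^j|x-y|)^R}\,dy
\end{equation*}
and to bound the integrand from below on the cube $Q_{jm}$.

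\textbf{Key steps.} For $x\in Q_{jm}$ and $y\in Q_{jm}$ we have $|x-y|\le \sqrt n\,2^{-j}$, hence $1+2^j|x-y|\le 1+\sqrt n$, and therefore $(1+2^j|x-y|)^{-R}\ge (1+\sqrt n)^{-R}$, a constant depending only on $n$ and $R$. Consequently
\begin{equation*}
\left(\eta_{j,R}\ast \chi_{jm}\right)(x) \ge (1+\sqrt n)^{-R}\,2^{jn}\,|Q_{jm}| = (1+\sqrt n)^{-R} = (1+\sqrt n)^{-R}\,\chi_{jm}(x)
\end{equation*}
whenever $x\in Q_{jm}$; and when $x\notin Q_{jm}$ the right-hand side of \eqref{aux7} is nonnegative while $\chi_{jm}(x)=0$, so the estimate is trivial there. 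Thus \eqref{aux7} holds with implicit constant $(1+\sqrt n)^{R}$, which is independent of $x$, $j$ and $m$, as claimed. (One could equally use the slightly larger region $2^j|x-2^{-j}m|\le \sqrt n$ to reach the same conclusion; the precise constant is irrelevant.)

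\textbf{Main obstacle.} There is essentially no obstacle: the only point requiring a moment's care is making sure the lower bound on $(1+2^j|x-y|)^{-R}$ uses only the diameter of $Q_{jm}$ (so that the constant does not depend on $j$ or $m$), which is immediate from $\operatorname{diam}Q_{jm}=\sqrt n\,2^{-j}$. This is why the statement says the proof is very easy and details may be omitted.
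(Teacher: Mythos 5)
Your proof is correct and is precisely the elementary computation the paper had in mind when it omitted the details: bounding $1+2^j|x-y|$ by $1+\sqrt{n}$ for $x,y$ in the same cube and using $2^{jn}|Q_{jm}|=1$ gives the constant $(1+\sqrt{n})^{R}$, independent of $x$, $j$ and $m$. Nothing is missing.
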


%
%

\noindent\textit{Proof of Proposition~\ref{pro:nikolskii}}.$\;$ Technically the case $q^+=\infty$ is more complicated to deal with, but it can be handled by using similar arguments to the case $q^+<\infty$. We give details for the latter only. The proof below is based on ideas from \cite[Lemma~6.3]{AlmH10}.

If $q^+<\infty$ then we want to
show that
\begin{equation}\label{nikolskii1}
\left\| \Big|c_0\,\sum_{m\in\Zn} \lambda_{jm} \,w_j\,\chi_{jm}\Big|^{\qx} \,\big|\,L_{\frac{\pumx}{\qx}} \right\| \le
\left\| \Big|\sum_{m\in\Zn} \lambda_{jm} \,w_j\,2^{j(\frac{n}{\pzx}-\frac{n}{\pumx})}\,\chi_{jm}\Big|^{\qx} \,\big|\,L_{\frac{\pzx}{\qx}} \right\| + 2^{-j}
\end{equation}
when the quasi-norm of the right-hand side is less than or equal to one. Let $v_j(x):=w_j(x)\,2^{-j\frac{n}{p_1(x)}}$, $x\in\Rn$, $j\in\Nz$,
and let $\delta_j$ denote the right-hand side of \eqref{nikolskii1}. For any $r>0$, $j\in\Nz$, and almost every $x\in\Rn$, we have
\begin{eqnarray*}
\delta_j^{-\frac{r}{q(x)}} \, \Big| \sum_{m\in\Zn} \lambda_{jm} \,v_j(x)\,\chi_{jm}(x) \Big|^r & = & \delta_j^{-\frac{r}{q(x)}} \, \sum_{m\in\Zn} |\lambda_{jm}|^r \,v_j(x)^r\,\chi_{jm}(x)\,\chi_{jm}(x) \\
& \le & c_1\, \delta_j^{-\frac{r}{q(x)}} \, \sum_{m\in\Zn} |\lambda_{jm}|^r \,v_j(x)^r\,\chi_{jm}(x)\,\big(\eta_{j,R}\ast \chi_{jm}\big)(x)\\
& \le & c_1\,c_2 \int_{\Rn}\sum_{m\in\Zn} |\lambda_{jm}|^r \,\delta_j^{-\frac{r}{q(y)}}\,v_j(y)^r\,\eta_{j,R}(x-y) \chi_{jm}(y)\,dy
\end{eqnarray*}
where $R>0$ is arbitrary, $c_1>0$ is the constant from \eqref{aux7} and $c_2>0$ comes from moving the quantity $\delta_j^{-\frac{r}{q(x)}}\,v_j(x)^r$ inside the convolution. We show briefly how this can be done. Clearly, it is enough to prove that
$$
\delta_j^{-\frac{1}{q(x)}} \,v_j(x) \approx \delta_j^{-\frac{1}{q(y)}} \,v_j(y)\,, \ \ \ \ x,y\in Q_{jm},
$$
with the implicit constants independent of $x,y,j,m$ and $(\lambda_{jm})$. The problematic part here is the first factor in the product. However, this can be dealt with using the assumption on the quasi-norm on the right-hand side of \eqref{nikolskii1}, which implies $\delta_j \in[2^{-j}, 1+2^{-j}]$, so that $-1 \le \tfrac{\log_2 \delta_j}{j} \le 1$ for $j\in\N$, together with the local $\log$-Hölder continuity of $\tfrac{1}{q}$.

Taking $r\in(0,p_0^-)$ we can dominate the right-hand side of the previous inequality by
$$ 2c_1\,c_2\,c_3 \left\| \Big|\sum_{m\in\Zn} \lambda_{jm} \,\delta_j^{-\frac{1}{\qx}}\,v_j(\cdot)\, 2^{j\frac{n}{\pzx}}\,\chi_{jm}\Big|^r \,\big|\, L_{\frac{\pzx}{r}} \right\| $$
after applying Hölder's inequality with exponent $p_0/r > 1$, with
$$
\left\| 2^{-j\frac{nr}{\pzx}}\,\eta_{j,R}(x-\cdot) \, \big|\, L_{(\frac{\pzx}{r})'} \right\| \le c_3.
$$
Note that we have $c_3 \in (0,\infty)$, independent of $x$ and $j$, if we choose $R$ such that $R\big((\tfrac{\pzx}{r})^\prime\big)^->n$. To see this just calculate the  semimodular and use \eqref{Lp-norm-mod} to estimate the corresponding norm. Taking $c_0^{-r}:= 2c_1\,c_2\,c_3$ (which can always be assumed to be at least one and independent of $j,x$ and $(\lambda_{jm})$), from the definition of $\delta_j$ we get
$$
c_0\,\delta_j^{-\frac{1}{q(x)}} \, \Big| \sum_{m\in\Zn} \lambda_{jm} \,v_j(x)\,\chi_{jm}(x) \Big| \le \left\| \sum_{m\in\Zn} \lambda_{jm} \,\delta_j^{-\frac{1}{\qx}}\,v_j(\cdot)\, 2^{j\frac{n}{\pzx}}\,\chi_{jm} \,\big|\, L_{\pzx} \right\| \le 1.
$$
Using the fact that $c_0\in(0,1]$ and that $p_1(x) \ge p_0(x)$, we can conclude that
$$
\varrho_{\frac{\pumx}{\qx}} \left( \delta_j^{-1}\, \Big| c_0 \sum_{m\in\Zn} \lambda_{jm} \,w_j(\cdot)\,\chi_{jm} \Big|^{\qx}  \right) \le  \varrho_{\frac{\pzx}{\qx}} \left( \delta_j^{-1}\, \Big|\sum_{m\in\Zn} \lambda_{jm} v_j(\cdot)\, 2^{j\frac{n}{\pzx}}\,\chi_{jm} \Big|^{\qx}  \right)  \le 1
$$
where the last inequality follows from the unit ball property, observing that the corresponding quasi-norm is at most one by the definition of $\delta$ and $v_j$. By the same property, we have
$$
\left\| \delta_j^{-1}\, \Big| c_0 \sum_{m\in\Zn} \lambda_{jm} \,w_j(\cdot)\,\chi_{jm} \Big|^{\qx} \,\big|\, L_{\frac{\pumx}{\qx}} \right\| \le 1,
$$
from which the inequality \eqref{nikolskii1} follows by homogeneity.\hfill$\square$

\begin{corollary}\label{cor:Sobolev-embed}
Let $\textbf{w}^\mathbf{1}\in\mathcal{W}^{\alpha}_{\alpha_1,\alpha_2}$ and $p_0,p_1,q\in\PPlog$. If $p_0\le p_1$ and
$$
\frac{w^0_j(x)}{w^1_j(x)} = 2^{j\left(\frac{n}{p_0(x)}-\frac{n}{p_1(x)}\right)}, \ \ x\in\Rn, \ \ j\in\Nz,
$$
then
$$
B^{\mathbf{w}^\mathbf{0}}_{\pzx,\qx}  \hookrightarrow B^{\mathbf{w}^\mathbf{1}}_{\pumx,\qx}.
$$
\end{corollary}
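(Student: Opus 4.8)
The plan is to derive the embedding from the atomic calculus of Section~\ref{sec:atom-mol-charact} combined with its discrete counterpart, Theorem~\ref{thm:discrete-Sobolev-embed}. First I would fix, once and for all, integers $K,L\in\Nz$ and a real number $d>1$ large enough that $K>\alpha_2$ and $L>\sigma_{p_1^-}-\alpha_1+c_{\log}(1/q)$, that is, the thresholds appearing in \eqref{conv-conditions1} for the \emph{target} weight sequence $\textbf{w}^\mathbf{1}\in\mathcal{W}^{\alpha}_{\alpha_1,\alpha_2}$ and the \emph{target} exponent $p_1$. Such a choice puts no constraint whatsoever on the analysis step below, where $K,L,d$ are free. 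Observe also that the hypotheses already force (as noted right after Theorem~\ref{thm:discrete-Sobolev-embed}) $\textbf{w}^\mathbf{0}$ to be admissible, so that $B^{\mathbf{w}^\mathbf{0}}_{\pzx,\qx}$ is a genuine space, and that $p_0,p_1,q\in\PPlog$ makes all spaces involved independent of the fixed admissible system used to define them. Since we are dealing with Besov spaces, no boundedness of $p_0,p_1$ or $q$ is needed at any step.

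Next I would run the analysis step followed by the discrete embedding. Given $f\in B^{\mathbf{w}^\mathbf{0}}_{\pzx,\qx}$, Theorem~\ref{thm:decomp2}(b) applied with this $K,L,d$ yields an optimal atomic decomposition $f=\sum_{j\ge 0}\sum_{m\in\Zn}\lambda_{jm}\,a_{jm}$, where the $a_{0m}$ are $(K,0,d)$-atoms, the $a_{jm}$ with $j\ge 1$ are $(K,L,d)$-atoms, the inner sum is pointwise and the outer sum converges in $\cS'$, and $\lambda=\lambda(f)\in b^{\mathbf{w}^\mathbf{0}}_{\pzx,\qx}$ with $\|\lambda\,|\,b^{\mathbf{w}^\mathbf{0}}_{\pzx,\qx}\|\lesssim\|f\,|\,B^{\mathbf{w}^\mathbf{0}}_{\pzx,\qx}\|$. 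Because $q\in\PPlog$ forces $1/q$ to be locally $\log$-H\"older, and the weight ratio is precisely the one assumed in Theorem~\ref{thm:discrete-Sobolev-embed}, that theorem gives $\lambda\in b^{\mathbf{w}^\mathbf{1}}_{\pumx,\qx}$ with $\|\lambda\,|\,b^{\mathbf{w}^\mathbf{1}}_{\pumx,\qx}\|\lesssim\|\lambda\,|\,b^{\mathbf{w}^\mathbf{0}}_{\pzx,\qx}\|$.

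Finally I would feed the same coefficients $\lambda$ and atoms $a_{jm}$ into the synthesis direction, applying Theorem~\ref{thm:decomp1} to the target space $B^{\mathbf{w}^\mathbf{1}}_{\pumx,\qx}$ and invoking Remark~\ref{rem:atoms-also} so that atoms may take the place of molecules (the size restriction on $M$ being then vacuous): as $K>\alpha_2$ and $L>\sigma_{p_1^-}-\alpha_1+c_{\log}(1/q)$, the distribution $g:=\sum_{j\ge 0}\sum_{m\in\Zn}\lambda_{jm}\,a_{jm}$ lies in $B^{\mathbf{w}^\mathbf{1}}_{\pumx,\qx}$ with $\|g\,|\,B^{\mathbf{w}^\mathbf{1}}_{\pumx,\qx}\|\lesssim\|\lambda\,|\,b^{\mathbf{w}^\mathbf{1}}_{\pumx,\qx}\|$, the (iterated) series converging in $\cS'$. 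But $f$ is the $\cS'$-limit of exactly this series, so $g=f$ in $\cS'$; chaining the three norm estimates gives $\|f\,|\,B^{\mathbf{w}^\mathbf{1}}_{\pumx,\qx}\|\lesssim\|f\,|\,B^{\mathbf{w}^\mathbf{0}}_{\pzx,\qx}\|$, which is the asserted continuous embedding. I do not anticipate a genuine obstacle; the only points requiring care are that one triple $(K,L,d)$ must serve simultaneously the (unconstrained) analysis step and the synthesis step, where the lower bounds must refer to the target data $\textbf{w}^\mathbf{1}$ and $p_1$, and the identification $g=f$, which rests on Theorems~\ref{thm:decomp2} and~\ref{thm:decomp1} producing convergence of the \emph{same} series in $\cS'$. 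All the substantive analytic content is already packaged in Theorem~\ref{thm:discrete-Sobolev-embed} (equivalently Proposition~\ref{pro:nikolskii}).
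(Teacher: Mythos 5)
Your proposal is correct and follows essentially the same route as the paper: fix $K>\alpha_2$, $L>\sigma_{p_1^-}-\alpha_1+c_{\log}(1/q)$ and $d>1$, analyse $f$ via Theorem~\ref{thm:decomp2}, transfer the coefficients with Theorem~\ref{thm:discrete-Sobolev-embed}, and synthesise in the target space via Theorem~\ref{thm:decomp1} together with Remark~\ref{rem:atoms-also}, chaining the three estimates. The points you flag (thresholds chosen for the target data, identification of the synthesised distribution with $f$ in $\cS'$) are exactly the ones implicit in the paper's argument.
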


\begin{proof}
The idea is to transfer the embedding given in Theorem~\ref{thm:discrete-Sobolev-embed} to the Besov spaces via atomic decompositions.
Consider $K,L\in\Nz$ such that $K>\alpha_2$ and $L>\sigma_{p_1^-}-\alpha_1 +c_{\log}(1/q)$. Let $d>1$. By Theorem~\ref{thm:decomp2}, for each $f\in B^{\mathbf{w}^\mathbf{0}}_{\pzx,\qx}$ we can write
\begin{equation*}\label{fdecomp}
f=\sum_{j=0}^\infty \sum_{m\in\Zn} \lambda_{jm}\, a_{jm} \ \ \ \ \text{(convergence in $\cS'$)}
\end{equation*}
with
\begin{equation}\label{aux1}
\| \lambda(f) \,|\, b^{\mathbf{w}^\mathbf{0}}_{\pzx,\qx}\| \le c \, \| f \,|\, B^{\mathbf{w}^\mathbf{0}}_{\pzx,\qx}\|,
\end{equation}
where $c>0$ is independent of $f$, and $a_{0m}$, $m\in\Zn$, are $(K,0,d)$-atoms and $a_{jm}$, $j\in\N$, $m\in\Zn$, are $(K,L,d)$-atoms. From Theorem~\ref{thm:discrete-Sobolev-embed} we have that
\begin{equation}\label{aux2}
\| \lambda(f) \,|\, b^{\mathbf{w}^\mathbf{1}}_{\pumx,\qx} \| \le c' \, \| \lambda(f) \,|\, b^{\mathbf{w}^\mathbf{0}}_{\pzx,\qx} \|
\end{equation}
with $c'>0$ not depending on $\lambda(f)$. Due to our choice of $K$ and $L$, Theorem~\ref{thm:decomp1} and Remark~\ref{rem:atoms-also} guarantee then the given $f$ belongs also to $B^{\mathbf{w}^\mathbf{1}}_{\pumx,\qx}$ and
\begin{equation}\label{aux3}
\| f \,|\, B^{\mathbf{w}^\mathbf{1}}_{\pumx,\qx}\| \le c''\, \| \lambda(f) \,|\, b^{\mathbf{w}^\mathbf{1}}_{\pumx,\qx} \|,
\end{equation}
with $c''>0$ independent of $f$. Combining \eqref{aux1}-\eqref{aux3} we get the required inequality.
\end{proof}

\begin{remark}
For constant $q$ (and even more when $p$ is also constant) there is a real advantage in proving Sobolev type embeddings for Besov spaces via atomic representations, since a corresponding discrete version is easy to obtain. This is not exactly the case when we deal with variable exponents $q$. In fact, the proof of Theorem~\ref{thm:discrete-Sobolev-embed} above already required a complicated result playing the role of classical Nikolskii's inequalities. An alternative approach to the embedding in Corollary~\ref{cor:Sobolev-embed} would be to show it directly from an adapted version of Proposition~\ref{pro:nikolskii} (cf. \cite[Lemma~6.3]{AlmH10} in the case of variable smoothness).
\end{remark}

The previous corollary can be seen as an extension of \cite[Theorem~6.4]{AlmH10} to the $2$-microlocal setting.

\begin{corollary}
Let $\textbf{w}^\mathbf{0}\in\W$, $\textbf{w}^\mathbf{1}\in\mathcal{W}^{\beta}_{\beta_1,\beta_2}$ and $p_0,p_1,q_0,q_1\in\PPlog$ with
$$
1\le \frac{w^0_j(x)}{w^1_j(x)} = 2^{j\big(\frac{n}{p_0(x)}-\frac{n}{p_1(x)}+\varepsilon(x)\big)}, \ \ \ \ x\in\Rn, \ \ j\in\Nz.
$$
If $\varepsilon^->0$ then
$$
B^{\mathbf{w}^\mathbf{0}}_{\pzx,\qzx}  \hookrightarrow B^{\mathbf{w}^\mathbf{1}}_{\pumx,\qumx}.
$$
\end{corollary}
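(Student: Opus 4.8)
The plan is to realize the claimed embedding as a composition of two embeddings already at our disposal, inserting an intermediate weight sequence that absorbs the ``slack'' $\varepsilon^->0$. Define $\textbf{v}=(v_j)_j$ by
\[
v_j(x):=w^1_j(x)\,2^{j\left(\frac{n}{p_0(x)}-\frac{n}{p_1(x)}\right)},\qquad x\in\Rn,\ j\in\Nz .
\]
Since $p_0,p_1\in\PPlog$, the map $\tfrac{1}{p_0}-\tfrac{1}{p_1}$ is bounded and locally $\log$-H\"older continuous, so $\big(2^{j(\frac{n}{p_0(\cdot)}-\frac{n}{p_1(\cdot)})}\big)_j$ is an admissible weight sequence (cf.\ the examples in \cite{AlmC15a} and the remark after Theorem~\ref{thm:discrete-Sobolev-embed}); being a pointwise product of two admissible weight sequences, $\textbf{v}$ is admissible as well, say $\textbf{v}\in\mathcal{W}^{\gamma}_{\gamma_1,\gamma_2}$ for suitable parameters. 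Directly from the definition of $\textbf{v}$ and from the hypothesis relating $\textbf{w}^\mathbf{0}$ and $\textbf{w}^\mathbf{1}$ one has
\[
\frac{v_j(x)}{w^0_j(x)}=2^{-j\varepsilon(x)}\qquad\text{and}\qquad\frac{v_j(x)}{w^1_j(x)}=2^{j\left(\frac{n}{p_0(x)}-\frac{n}{p_1(x)}\right)} .
\]

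First I would pass from the weight $\textbf{w}^\mathbf{0}$ to $\textbf{v}$. Since $\varepsilon(x)\ge\varepsilon^->0$ for a.e.\ $x\in\Rn$, we have $\big\|\tfrac{v_j}{w^0_j}\,|\,L_\infty\big\|=2^{-j\varepsilon^-}$, so the sequence $\big(\tfrac{v_j}{w^0_j}\big)_j$ decays geometrically and hence belongs to $\ell_{q^\ast}(L_\infty)$ for \emph{every} $q^\ast\in(0,\infty]$; in particular this holds for $\tfrac{1}{q^\ast}=\big(\tfrac{1}{q^-_1}-\tfrac{1}{q^+_0}\big)_+$. Proposition~\ref{pro:embed1} (in its Besov version, applied with $p:=p_0$) then gives
\[
B^{\textbf{w}^\mathbf{0}}_{\pzx,\qzx}\hookrightarrow B^{\textbf{v}}_{\pzx,\qumx}.
\]
Secondly, $\textbf{v}$ and $\textbf{w}^\mathbf{1}$ are tied by precisely the Sobolev relation $\tfrac{v_j(x)}{w^1_j(x)}=2^{j(\frac{n}{p_0(x)}-\frac{n}{p_1(x)})}$, with $p_0\le p_1$; hence Corollary~\ref{cor:Sobolev-embed}, applied with $q:=q_1$ and with $\textbf{v}$ in the role of the source weight, yields
\[
B^{\textbf{v}}_{\pzx,\qumx}\hookrightarrow B^{\textbf{w}^\mathbf{1}}_{\pumx,\qumx}.
\]
Composing the two continuous embeddings produces the asserted one.

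I do not expect a genuine difficulty here: the one idea that matters is the choice of the intermediate weight $\textbf{v}$ --- morally ``$\textbf{w}^\mathbf{1}$ Sobolev-lifted up to the $p_0$-level'' --- after which the statement splits into two results already proved. The step deserving the most care is the first one, where the point is simply that the strict positivity $\varepsilon^->0$ forces $v_j/w^0_j\to0$ geometrically and uniformly in $x$, so that membership in $\ell_{q^\ast}(L_\infty)$ is automatic no matter how large $q^\ast$ is; the remainder is the routine bookkeeping that $\textbf{v}$ is admissible and that the $\log$-H\"older requirements of Proposition~\ref{pro:embed1} and Corollary~\ref{cor:Sobolev-embed} are inherited from $p_0,p_1,q_0,q_1\in\PPlog$ and $\textbf{w}^\mathbf{1}\in\mathcal{W}^{\beta}_{\beta_1,\beta_2}$.
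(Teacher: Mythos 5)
Your proof is correct and is essentially the same as the paper's: both realize the embedding by inserting an auxiliary admissible intermediate weight and composing Proposition~\ref{pro:embed1} (whose $\ell_{q^\ast}(L_\infty)$ hypothesis is exactly what the geometric decay $\|2^{-j\varepsilon(\cdot)}\,|\,L_\infty\|=2^{-j\varepsilon^-}$, i.e.\ $\varepsilon^->0$, provides) with the Sobolev embedding of Corollary~\ref{cor:Sobolev-embed}. The paper merely composes in the opposite order, via $B^{\mathbf{w}^\mathbf{0}}_{\pzx,\qzx}\hookrightarrow B^{\mathbf{w}^\mathbf{0}}_{\pzx,\infty}\hookrightarrow B^{\mathbf{w}^{\varepsilon}}_{\pumx,\infty}\hookrightarrow B^{\mathbf{w}^\mathbf{1}}_{\pumx,\qumx}$ with $w^{\varepsilon}_j:=w^1_j\,2^{j\varepsilon(\cdot)}$ (Sobolev step done at $q=\infty$, the $\varepsilon$-slack absorbed last), whereas you absorb the slack first at the $p_0$-level via $v_j=w^1_j\,2^{j(\frac{n}{p_0(\cdot)}-\frac{n}{p_1(\cdot)})}$ and then apply the Sobolev step with $q=q_1$ — the same two ingredients, including the (implicit, in both arguments) use of $p_0\le p_1$ when invoking Corollary~\ref{cor:Sobolev-embed}.
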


\begin{proof}
Let $w_j^\varepsilon(x):= w^1_j(x)\,2^{j\varepsilon(x)}, \ \ j\in\Nz, \ \ x\in\Rn$. By Corollary~\ref{cor:Sobolev-embed} and Proposition~\ref{pro:embed1},
$$
B^{\mathbf{w}^\mathbf{0}}_{\pzx,\qzx} \hookrightarrow B^{\mathbf{w}^\mathbf{0}}_{\pzx,\infty} \hookrightarrow B^{\mathbf{w}^\mathbf{\varepsilon}}_{\pumx,\infty} \hookrightarrow B^{\mathbf{w}^\mathbf{1}}_{\pumx,\qumx}.
$$
In the last embedding we used the fact that $\big(\tfrac{w^1_j}{w^\varepsilon_j}\big)_j = (2^{-j\varepsilon(\cdot)})_j \in \ell_{q^\ast}(L_\infty)$ with $\tfrac{1}{q^\ast}=\big( \tfrac{1}{q^-_1}- \tfrac{1}{\infty}\big)_+ = \tfrac{1}{q^-_1}$, which is ensured by the hypotheses $\varepsilon^->0$.
\end{proof}

\begin{remark}
As regards the assumptions on $q$'s in the above corollary, we can just assume $q_0,q_1\in\PP$ when the fixed admissible system is the same for all the spaces.
\end{remark}

Sobolev type embeddings for spaces with variable smoothness and integrability were already studied in \cite{AlmH10} and in \cite{Vyb09}. Embeddings dealing with the more general spaces $\A$ were recently studied in the papers \cite{GonMN14}, \cite{MouNS13} (for constant $q$ in the $B$ case).

\section{Proofs and comments on atomic and molecular results}\label{sec:proofs}

\subsection{Proof of Proposition~\ref{pro:conv}}\label{sec:proof-pro}\hspace{100mm}\\
\emph{Step 1}: Convergence of the inner sum in \eqref{conv} (both pointwisely and in $\cS'$). By the properties of the class $\W$ we have
$$
2^{-j\alpha_1} w_j(2^{-j}m)\, (1+2^j|x-2^{-j}m|)^\alpha\,(1+|x|)^\alpha \gtrsim 2^{-j\alpha_1} w_j(x)\,(1+|x|)^\alpha \gtrsim 1
$$
(with the implicit constants independent of $x\in\Rn$, $j\in\Nz$, $m\in\Z^n$). On the other hand, the inequality
\begin{equation}\label{ineq}
|\lambda_{jm}|\, w_j(2^{-j}m)\,\|\chi_{jm}\,|L_\px\| \le \|\lambda\,| b^\w_{\px,\infty}\|
\end{equation}
holds for every $j\in\Nz$ and $m\in\Z^n$. Hence, for any $\phi\in\cS$, we have

\begin{align*}
& \quad \ \int_\Rn \sum_{m\in\Zn} |\lambda_{jm}|\, |[aa]_{jm}(x)|\,|\phi(x)|\,dx\\
& \lesssim \int_\Rn \sum_{m\in\Zn} |\lambda_{jm}| 2^{-j\alpha_1} w_j(2^{-j}m)\, (1+2^j|x-2^{-j}m|)^{\alpha-M}\,(1+|x|)^\alpha |\phi(x)|\,dx \\
& \lesssim 2^{-j\big(\alpha_1-\frac{n}{p^-}\big)} \, \|\lambda\,| b^\w_{\px,\infty}\| \int_\Rn \sum_{m\in\Zn} (1+2^j|x-2^{-j}m|)^{\alpha-M}\,(1+|x|)^\alpha |\phi(x)|\,dx \\
& \lesssim 2^{-j\big(\alpha_1-\frac{n}{p^-}\big)} \, \|\lambda\,| b^\w_{\px,\infty}\| \int_\Rn (1+|x|)^\alpha |\phi(x)|\,dx
\end{align*}
where in the second step we used that $2^{j\frac{n}{p^-}} \|\chi_{jm}\,|L_\px\| \gtrsim 1$ (by \eqref{norm-charact-func-cubes}) combined with inequality \eqref{ineq}, and in the third we observed that
$$
\sum_{m\in\Z^n} (1+2^j|x-2^{-j}m|)^{\alpha-M} \lesssim \sum_{m\in\Z^n} (1+|[2^jx]-m|)^{\alpha-M} = \sum_{m'\in\Z^n} (1+|m'|)^{\alpha-M} < \infty
$$
since $M-\alpha>n$ (so the sum is bounded from above by a constant not depending on $x$ nor $j$). Therefore, there exists $c>0$ such that
\begin{equation}\label{finite}
\int_\Rn \sum_{m\in\Zn} |\lambda_{jm}|\, |[aa]_{jm}(x)|\,|\phi(x)|\,dx \le c \, 2^{-j\big(\alpha_1-\frac{n}{p^-}\big)} \, \|\lambda\,| b^\w_{\px,\infty}\| \, \mathfrak{p}_{n+2+[\alpha]}(\phi)
\end{equation}
for all $j\in\Nz$, $\lambda\in b^\w_{\px,\infty}$ and $\phi\in\cS$, where
$$\mathfrak{p}_{N}(\phi):= \sup_{x\in\Rn}(1+|x|)^N \sum_{|\gamma|\le N} |D^\gamma \phi(x)|.$$
Since the right-hand side in \eqref{finite} is finite, we conclude that the inner sum in \eqref{conv}
is absolutely convergent almost everywhere (a.e.), and consequently convergent a.e..\\
It remains to show that this sum converges also in $\cS'$ to the regular distribution given by
\begin{equation}\label{fj-limit}
f_j(x):= \sum_{m\in\Zn} \lambda_{jm}\, [aa]_{jm}(x), \ \ \ x-\textrm{a.e. in} \ \ \Rn.
\end{equation}
The idea here is to reduce the convergence problem in $\cS'$ to the study of convergence of appropriate families in $\mathbb{C}$. Observing that $\Zn$ is (infinitely) countable and that the sum is absolutely convergent, as mentioned above, after re-indexing the terms we can write
$$ f_j= \sum_{k\in\Nz} \lambda_{jk}\, [aa]_{jk} = \sum_{k=0}^\infty \lambda_{jk}\, [aa]_{jk}, \ \ \ \textrm{a.e. in} \ \ \Rn.$$
Given any $\phi\in\cS$, by \eqref{finite} the Lebesgue dominated convergence theorem can be applied to yield
$$
\lim_{\kappa\to\infty} \int_{\Rn} \sum_{k=0}^\kappa \lambda_{jk}\, [aa]_{jk}(x)\, \phi(x)\,dx = \int_\Rn f_j(x)\, \phi(x)\,dx\,,
$$
that is,
$$
\sum_{k=0}^\infty \langle\lambda_{jk}\, [aa]_{jk}, \phi\rangle = \langle f_j, \phi\rangle \ \ \ \textrm{in} \ \ \mathbb{C}.
$$
Clearly we can use the same arguments above whatever the order we choose in $\Nz$ to construct a standard series, so
$$
\sum_{k=0}^\infty \langle\lambda_{jk}\, [aa]_{jk}, \phi\rangle \ \ \textrm{is unconditionally convergent to} \ \ \langle f_j, \phi\rangle \ \ \ \textrm{in} \ \ \mathbb{C}
$$
(for each fixed $j\in\Nz$) and therefore
$$
\sum_{m\in\Z^n} \langle\lambda_{jm}\, [aa]_{jm}, \phi\rangle = \langle f_j, \phi\rangle \ \ \ \textrm{in} \ \ \mathbb{C}.
$$
Since $\phi$ is arbitrary in $\cS$, for any $j\in\Nz$ we finally conclude that the inner sum in \eqref{conv} converges indeed in $\cS'$ (to the regular distribution $f_j$ defined in \eqref{fj-limit}).

\emph{Step 2}: Convergence of the outer sum in \eqref{conv}. This convergence follows if we show that there exist $N\in\N$ and $c>0$ such that
$$
\sum_{j=0}^\infty \big| \langle f_j,\phi\rangle\big| \le c\, \mathfrak{p}_N(\phi)
$$
for all $\phi\in\cS$, where $f_j$ are the regular distributions given by \eqref{fj-limit}. Such inequality can be derived following the arguments from \cite[Appendix]{Kem10} (the choice of $N$ being possible essentially due to the requirements in \eqref{conv-conditions}), provided one uses the convergence of the inner sum not only in the pointwise sense but also in the sense of distributions (cf. Step 1 above). That approach leads us to the explicit sufficient condition on the size of $M$ given in \eqref{conv-conditions}.

We note that a key tool used in the proof over there is the discrete Sobolev embedding
$$
b^{\mathbf{{w}^0}}_{\pzx,\infty}  \hookrightarrow b^{\mathbf{w}^1}_{\pumx,\infty} \ \ \ \ \ \ \ \ \text{for} \ \ \  \ \ \ \frac{w^0_j(x)}{w^1_j(x)} = 2^{j\left(\frac{n}{p_0(x)}-\frac{n}{p_1(x)}\right)}, \ \ x\in\Rn, \ \ j\in\Nz,
$$
where $\mathbf{w}^1\in \W$ and $p_0,p_1\in\PPlog$ with $p_0\le p_1$.

\emph{Step 3}: We show the convergence of the double series in \eqref{conv-double}. Arguing as before, there exist $N\in\N$ and $c>0$ such that
\begin{equation*}
\sum_{j=0}^\infty\sum_{m\in\Zn} |\langle \lambda_{jm}\, [aa]_{jm}, \phi\rangle| \le c \, \mathfrak{p}_{N}(\phi)
\end{equation*}
for all $\phi\in\cS$ . On the other hand, the Fubini's theorem for sums yields
\begin{equation*}
\sum_{(j,m)\in\Nz\times\Zn} |\langle \lambda_{jm}\, [aa]_{jm}, \phi\rangle| = \sum_{n=0}^\infty\sum_{m\in\Zn} |\langle \lambda_{jm}\, [aa]_{jm}, \phi\rangle| \le c \, \mathfrak{p}_{N}(\phi)
\end{equation*}
with $c>0$ independent of $\phi$. Therefore, arguing as in Step 2, we can show that the sum \eqref{conv-double} converges in $\cS'$ to the same distribution given by \eqref{conv}, namely
$$
\sum_{(j,m)\in\Nz\times\Zn} \lambda_{jm}\, [aa]_{jm} = \sum_{j=0}^\infty f_j
$$
with the distributions $f_j$ defined by \eqref{fj-limit}.\hfill$\square$

\subsection{On the proof of Theorem~\ref{thm:decomp1}}\label{sec:proof-decomp1}\hspace{100mm}\\
The proof can be carried out using the general scheme given in \cite{Kem10}, so we are not going to repeat the arguments. However, since modifications to the usual auxiliary results are needed in order to include the more complicated case of variable $q$, mainly in the $B$ case which was not studied in \cite{Kem10}, we make some comments here.

The first point has to do with the bad behavior of the Hardy-Littlewood maximal operator in the mixed spaces $L_\px(\ell_\qx)$ and $\ell_\qx(L_\px)$ when $q$ is variable. Instead of the usual technical estimates (as in \cite[Lemma~3.7]{Kem10}, \cite[Lemma~7.1]{Kyr03}), convolution inequalities involving $\eta$-functions are used in combination with Lemma \ref{lem:conv-eta}. This approach was already suggested in \cite[Section~5]{Kem10} when dealing with the space $\F$, but we find the next formulation, which slightly differs from Lemma~5.5 in that paper, more appropriate in general.
\begin{lemma}\label{lem:eta-instaed-maximal}
Let $j,\nu\in\Nz$, $x\in\Rn$, $0<t\le 1$, $R>n/t$ and $(h_{\nu m})_{m\in\Zn} \subset \mathbb{C}$. Then
$$
\sum_{m\in\Zn} |h_{\nu m}| (1+2^{\min\{\nu,j\}} |x-2^{-\nu}m|)^{-R} \lesssim \max\{1, 2^{(\nu-j)R}\} \left( \eta_{\nu,Rt} \ast \Big| \sum_{m\in\Zn} h_{\nu m}\, \chi_{\nu m} \Big|^t  \right)^{1/t}(x),
$$
with the implicit constant independent of $j,\nu,x$ and $(h_{\nu m})$.
\end{lemma}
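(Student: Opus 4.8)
The plan is to fix $\nu, j \in \Nz$ and $x \in \Rn$, and to estimate each term of the sum on the left according to the position of the cube $Q_{\nu m}$ relative to $x$. Since the cubes $\{Q_{\nu m}\}_{m\in\Zn}$ have pairwise disjoint interiors and side length $2^{-\nu}$, for any fixed $y$ there is essentially only one index $m$ with $y \in Q_{\nu m}$; I would exploit this to pass from the pointwise sum to a convolution. First I would observe the elementary geometric fact that if $y \in Q_{\nu m}$ then $|x - 2^{-\nu}m| \le |x - y| + \sqrt{n}\,2^{-\nu}$, so that
\[
(1 + 2^{\min\{\nu,j\}}|x - 2^{-\nu}m|)^{-R} \lesssim (1 + 2^{\min\{\nu,j\}}|x-y|)^{-R}
\]
with constant depending only on $n$ and $R$ (here one also uses $2^{\min\{\nu,j\}-\nu}\le 1$). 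Hence, inserting $\chi_{\nu m}(y)$ and integrating in $y$ over $Q_{\nu m}$ (whose measure is $2^{-n\nu}$), each left-hand term can be written as $2^{n\nu}\int_{Q_{\nu m}} |h_{\nu m}|\,(1+2^{\min\{\nu,j\}}|x-2^{-\nu}m|)^{-R}\,\chi_{\nu m}(y)\,dy$, which is controlled by a constant times $\int_{\Rn} 2^{n\nu}(1+2^{\min\{\nu,j\}}|x-y|)^{-R} |h_{\nu m}|\chi_{\nu m}(y)\,dy$.

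Next I would sum over $m$ and use disjointness of the cubes to replace $\sum_m |h_{\nu m}|\chi_{\nu m}(y)$ by $\big|\sum_m h_{\nu m}\chi_{\nu m}(y)\big|$ inside the integral. At this point the left-hand side is bounded by
\[
c\int_{\Rn} 2^{n\nu}\,(1+2^{\min\{\nu,j\}}|x-y|)^{-R}\,\Big|\sum_{m\in\Zn} h_{\nu m}\chi_{\nu m}(y)\Big|\,dy .
\]
To bring in the factor $\max\{1,2^{(\nu-j)R}\}$ and convert the kernel into a genuine $\eta_{\nu,\cdot}$ kernel, I would compare $(1+2^{\min\{\nu,j\}}|z|)^{-R}$ with $(1+2^\nu|z|)^{-R}$: when $\nu \le j$ these coincide, and when $\nu > j$ one has $(1+2^{j}|z|)^{-R} \le 2^{(\nu-j)R}(1+2^\nu|z|)^{-R}$ (check separately for $2^\nu|z|\le 1$ and $2^\nu|z|>1$). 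Thus the displayed integral is at most $\max\{1,2^{(\nu-j)R}\}$ times $\int_{\Rn} \eta_{\nu,R}(x-y)\,\big|\sum_m h_{\nu m}\chi_{\nu m}(y)\big|\,dy$, i.e. $\max\{1,2^{(\nu-j)R}\}\,\big(\eta_{\nu,R}\ast|\sum_m h_{\nu m}\chi_{\nu m}|\big)(x)$.

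Finally, to obtain the form with the exponent $t\in(0,1]$ on the right, I would invoke the standard sub-additivity trick for $\eta$-convolutions: for $0<t\le 1$ and $R>n/t$ one has the pointwise bound $\big(\eta_{\nu,R}\ast g\big)(x) \lesssim \big(\eta_{\nu,Rt}\ast g^t\big)^{1/t}(x)$ for non-negative $g$, with constant independent of $\nu$ and $x$ (this is the usual lemma used throughout this circle of ideas; it follows by splitting the integral into dyadic annuli $2^j|x-y|\sim 2^k$, using H\"older with exponent $1/t$ on each annulus, and summing, the condition $R>n/t$ guaranteeing convergence). Applying this with $g = |\sum_m h_{\nu m}\chi_{\nu m}|$ gives exactly the asserted estimate. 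The only mildly delicate point is the bookkeeping of constants to confirm they depend on none of $j,\nu,x,(h_{\nu m})$ — but each step above produces a constant depending only on $n$, $R$ and $t$, so no obstacle arises there; the sub-additivity trick is the one step worth stating carefully, and it is entirely standard.
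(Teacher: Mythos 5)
Your Steps 1--4 are fine: comparing the kernel value at the cube centre $2^{-\nu}m$ with its value at an arbitrary point $y\in Q_{\nu m}$ (using $2^{\min\{\nu,j\}-\nu}\le 1$), averaging over the cubes, and the elementary comparison $(1+2^{j}|z|)^{-R}\le 2^{(\nu-j)R}(1+2^{\nu}|z|)^{-R}$ for $\nu>j$ correctly give the case $t=1$, i.e.\ the bound by $\max\{1,2^{(\nu-j)R}\}\,\big(\eta_{\nu,R}\ast\big|\sum_m h_{\nu m}\chi_{\nu m}\big|\big)(x)$. The problem is the last step. The inequality $(\eta_{\nu,R}\ast g)(x)\lesssim\big(\eta_{\nu,Rt}\ast g^{t}\big)^{1/t}(x)$ is \emph{false} for general non-negative $g$ when $t<1$: take $g=\lambda\chi_{B(x_{0},\varepsilon)}$ with $\varepsilon\ll 2^{-\nu}$; then the left-hand side is $\approx\lambda\varepsilon^{n}2^{n\nu}(1+2^{\nu}|x-x_{0}|)^{-R}$ while the right-hand side is $\approx\lambda\varepsilon^{n/t}2^{n\nu/t}(1+2^{\nu}|x-x_{0}|)^{-R}$, and the ratio blows up like $(\varepsilon 2^{\nu})^{n(1-1/t)}$ as $\varepsilon\to 0$. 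Your suggested proof also cannot work: H\"older on each dyadic annulus gives $\big(\int_{A_k}g^{t}\big)^{1/t}\le |A_k|^{(1-t)/t}\int_{A_k}g$, i.e.\ an estimate in the \emph{wrong} direction. The valid versions of this ``$r$-trick'' require $g$ not to concentrate below the scale $2^{-\nu}$ --- either $\widehat g$ supported in $\{|\xi|\lesssim 2^{\nu}\}$ (Rychkov, Diening--H\"ast\"o--Roudenko) or, as here, $g$ constant on the dyadic cubes $Q_{\nu m}$; for such $g$ the claimed inequality is essentially equivalent to the lemma you are proving, so invoking it as a black box is circular.

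The repair is easy and is the standard route: insert the power $t$ \emph{before} passing to the convolution, using $\sum_m a_m\le\big(\sum_m a_m^{t}\big)^{1/t}$ for $a_m\ge 0$, $0<t\le 1$. This gives
\begin{equation*}
\sum_{m\in\Zn}|h_{\nu m}|\,(1+2^{\min\{\nu,j\}}|x-2^{-\nu}m|)^{-R}
\le\Big(\sum_{m\in\Zn}|h_{\nu m}|^{t}\,(1+2^{\min\{\nu,j\}}|x-2^{-\nu}m|)^{-Rt}\Big)^{1/t},
\end{equation*}
and now your Steps 1--4, applied verbatim with $Rt$ in place of $R$ and with $|h_{\nu m}|^{t}$ as coefficients (this is where the constancy of $\big|\sum_m h_{\nu m}\chi_{\nu m}\big|^{t}$ on each $Q_{\nu m}$ is used), bound the inner sum by $c\,\max\{1,2^{(\nu-j)Rt}\}\,\big(\eta_{\nu,Rt}\ast\big|\sum_m h_{\nu m}\chi_{\nu m}\big|^{t}\big)(x)$ with $c=c(n,R,t)$. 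Taking the $1/t$ power and noting $\max\{1,2^{(\nu-j)Rt}\}^{1/t}=\max\{1,2^{(\nu-j)R}\}$ yields exactly the asserted estimate, with constants depending only on $n$, $R$, $t$. (The paper itself does not print a proof of this lemma, so there is nothing to compare against; but as written your final step is a genuine gap, not merely a routine omission.)
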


Another important tool used in the proof of Theorem~\ref{thm:decomp1} is the discrete convolution inequality \cite[Lemma~9]{KemV12}, which is not so easy to prove in the $B$ case and when $q$ is variable. In this respect, we refer the reader to \cite[Lemma~3.4]{AlmC15a} for a detailed proof of it, generalizing the arguments from \cite{Rych99}.

Finally we point out a further statement needed, now for dealing with convolutions between molecules and functions from an admissible system $\{\varphi_j\}$. It is similar to \cite[Lemma~3.3]{FraJ85}:
\begin{lemma}
Let $\{\varphi_j\}$ be an admissible system and $\big([aa]_{jm}\big)$ be a system of $(K,L,M)$-molecules if $j\in\N$, and $(K,0,M)$-molecules if $j=0$, for given $K,L\in\Nz$ and $M>L+n$. Let also $N\ge 0$ be such that $M>N+L+n$. Then
\[
\big|(\varphi_j \ast [aa]_{\nu m})(x)\big| \lesssim 2^{-(\nu-j)(L+n)} (1+2^j|x-2^{-\nu}m|)^{-N}, \ \ \ \ \text{for} \ \ j\le \nu,
\]
and
\[
\big|(\varphi_j \ast [aa]_{\nu m})(x)\big| \lesssim 2^{-(j-\nu)K} (1+2^\nu|x-2^{-\nu}m|)^{-M}, \ \ \ \ \text{for} \ \ j\ge \nu,
\]
with the implicit constants independent of $x\in\Rn$, $m\in\Zn$ and $j,\nu\in\Nz$, and even independent of the particular system of molecules taken (as long as $K,L,M$ are kept fixed).
\end{lemma}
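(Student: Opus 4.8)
The plan is to run the classical molecule-meets-kernel (almost-orthogonality) argument, essentially the one behind \cite[Lemma~3.3]{FraJ85}, adapting only the exponent bookkeeping to the weight-free estimates needed here. First I would normalise by translation so that $2^{-\nu}m=0$, which turns \eqref{mol-supp} into $|D^\gamma[aa]_{\nu m}(y)|\le 2^{|\gamma|\nu}(1+2^\nu|y|)^{-M}$ and \eqref{mol-moments} into the vanishing of $\int_{\Rn}y^\gamma[aa]_{\nu m}(y)\,dy$ for $|\gamma|<L$ (available only when $\nu\ge1$). I would also isolate at the outset the elementary ``decay transfer'' inequality $1+2^a|x|\le(1+2^a|x-z|)(1+2^a|z|)$, valid for all $a\in\R$ and $x,z\in\Rn$; it is what lets one convert polynomial decay in a shifted variable into decay in $x$, and it is used in both halves of the proof.

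For $j\le\nu$ (with $\nu\ge1$) the idea is to exploit the $L$ moment conditions of the molecule. In $\int_{\Rn}\varphi_j(x-y)[aa]_{\nu m}(y)\,dy$ I would subtract from $y\mapsto\varphi_j(x-y)$ its Taylor polynomial of degree $<L$ at $y=0$ (legitimate by the moments), bound the remainder by $|y|^L$ times a derivative of $\varphi_j$ of order $L$, use the Schwartz decay $|D^\gamma\varphi_j(z)|\lesssim 2^{j(n+L)}(1+2^j|z|)^{-N}$, transfer that decay onto $x$, and then integrate the leftover $|y|^L(1+2^j|y|)^{N}(1+2^\nu|y|)^{-M}$ after the rescaling $z=2^\nu y$. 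The resulting numerical integral $\int_{\Rn}|z|^L(1+|z|)^{N-M}\,dz$ converges \emph{exactly} because $M>N+L+n$, and collecting the powers of $2$ gives the claimed $2^{-(\nu-j)(L+n)}(1+2^j|x|)^{-N}$. For $j\ge\nu$ I would do the mirror image, now using that $\varphi_j$ for $j\ge1$ has \emph{all} vanishing moments (its Fourier transform is supported away from the origin): subtract from $y\mapsto[aa]_{\nu m}(x-y)$ its Taylor polynomial of degree $<K$ at $y=0$, estimate the remainder by $|y|^K$ times $\sup_{|\gamma|=K}|D^\gamma[aa]_{\nu m}|$, i.e.\ by $2^{K\nu}|y|^K(1+2^\nu|x-\theta y|)^{-M}$, transfer the decay onto $x$, and integrate against $|\varphi_j(y)|$; the rapid decay of $\varphi$ makes the rescaled integral $\int_{\Rn}|z|^K(1+|z|)^{M-B}\,dz$ finite for $B$ taken large, yielding $2^{-(j-\nu)K}(1+2^\nu|x|)^{-M}$. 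The only degenerate case left over is $j=\nu=0$ (the single situation in which $\nu=0$ can occur with $j\le\nu$, or $j=0$ with $j\ge\nu$): there no moments are available and $\varphi_0=\Phi$, but the claimed prefactor is then $1$ and the statement collapses to the straightforward convolution bound $|(\Phi\ast[aa]_{00})(x)|\lesssim(1+|x|)^{-\min\{N,M\}}$, which follows from $\Phi\in\cS$, the molecular decay of order $M$, and $M>N+L+n\ge N$, again via the transfer inequality.

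The routine but slightly delicate part — where I expect to spend most of the care — is the exponent bookkeeping: keeping precise track of which power of $2^j$ versus $2^\nu$ is produced at each rescaling, and verifying that the hypotheses $M>L+n$ and $M>N+L+n$ are just enough to make the relevant integral converge while still leaving $N$ units of decay in $x$ when $j\le\nu$ (resp.\ the full $M$ units when $j\ge\nu$). I would also keep an explicit eye on uniformity: every constant introduced — from Taylor's remainder, from the Schwartz seminorms of $\varphi$ and $\Phi$, from the decay-transfer inequality, and from the final numerical integrals — depends only on $n$, on the fixed admissible system $\{\varphi_j\}$, and on $K,L,M,N$, never on the particular molecule nor on $x,m,j,\nu$, which is precisely the uniformity asserted in the statement.
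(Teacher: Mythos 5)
Your proposal is correct and is essentially the approach the paper itself takes: the paper gives no written proof of this lemma but points to the classical almost-orthogonality estimate of Frazier--Jawerth (\cite[Lemma~3.3]{FraJ85}), and your argument --- Taylor expansion of $\varphi_j(x-\cdot)$ against the molecule's $L$ moments for $j\le\nu$, Taylor expansion of the molecule against the vanishing moments of $\varphi_j$ ($j\ge 1$) for $j\ge\nu$, the decay-transfer inequality, and the rescaled integrals whose convergence uses exactly $M>N+L+n$ --- is precisely that argument with the correct exponent bookkeeping. One small correction in the degenerate case $j=\nu=0$: the second display requires decay $(1+|x|)^{-M}$, which is stronger than your stated $(1+|x|)^{-\min\{N,M\}}=(1+|x|)^{-N}$, but your own computation delivers it at once, since $(1+|y|)^{-M}\le (1+|x|)^{-M}(1+|x-y|)^{M}$ and $\int_{\Rn}|\Phi(x-y)|(1+|x-y|)^{M}\,dy$ is finite because $\Phi\in\cS$.
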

This is partly used to show that, for each $j\in\Nz$, both sums in
\[
\sum_{\nu=0}^{\infty} \sum_{m\in\Zn} \lambda_{\nu m} \,(\varphi_j \ast [aa]_{\nu m})
\]
converge both pointwisely and in $\cS'$ (to the same regular distribution). Although we are skipping many details here, we would like to stress that both types of convergence are important in a proof of a result like Theorem~\ref{thm:decomp1} following the mentioned scheme, something which is not always clearly stated in the literature.

\subsection{Proof of Theorem~\ref{thm:decomp2}}\label{sec:proof-decomp2}\hspace{100mm}\\
Let us first formulate an auxiliary  result, which is a refinement of \cite[Lemma~3.6]{Kem10} (see also the proof of \cite[Theorem~2.6.ii)]{FraJ85}):
\begin{lemma}\label{formula}
Given $L\in\Nz$ and $\sigma>0$, there are $\delta>0$ and $\phi_0,\phi,\psi_0,\psi \in \cS$ such that
\begin{align*}
& \supp \phi_0, \; \supp \phi \subset B(0,\sigma), \ \ \ \ \int_{\Rn}x^\beta \phi(x)\,dx = 0 \ \ \text{for} \ \ 0\le |\beta|<L,\\
&  |\hat{\phi}_0(x)| >0 \ \ \text{for} \ \ |x|\le 2\delta, \ \ \ \ |\hat{\phi}(x)| >0 \ \ \text{for} \ \ \delta/2 \le |x|\le 2\delta,\\
& |\hat{\psi}_0(x)| >0 \ \ \text{iff} \ \ |x|< 2\delta, \ \ \ \ |\hat{\psi}(x)| >0 \ \ \text{iff} \ \ \delta/2 < |x| < 2\delta,
\end{align*}
and, for any $f\in\cS'$, there holds
\[
\hat{\phi}_0 \hat{\psi}_0 f + \sum_{j=1}^\infty \hat{\phi}(2^{-j}\cdot)\, \hat{\psi}(2^{-j}\cdot) f = f \ \ \ \ \ (\text{convergence in} \ \ \cS').
\]
\end{lemma}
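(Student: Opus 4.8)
The plan is to construct $\phi_0,\phi,\psi_0,\psi$ explicitly and then reduce the reproducing identity to a dyadic resolution of unity on the Fourier side, following the pattern of the references cited after the statement. First I would fix a nonnegative $\phi_0\in\cS$, $\phi_0\not\equiv 0$, with $\supp\phi_0\subset B(0,\sigma)$. Then $\hat\phi_0(0)=\int_{\Rn}\phi_0>0$, so by continuity of $\hat\phi_0$ I may choose $\delta>0$ small enough that $\hat\phi_0$ has no zero on $\{|x|\le 2\delta\}$ (in fact on a slightly larger ball). Next I set $\phi:=(-\Delta)^L\phi_0$, so that $\hat\phi(\xi)=|\xi|^{2L}\hat\phi_0(\xi)$: this $\phi$ still lies in $\cS$ with $\supp\phi\subset B(0,\sigma)$, its Fourier transform vanishes to order $2L$ at the origin, hence $\int_{\Rn}x^\beta\phi(x)\,dx=0$ for $0\le|\beta|<L$, and $\hat\phi(\xi)\ne 0$ whenever $0<|\xi|\le 2\delta$, in particular on the closed annulus $\delta/2\le|\xi|\le 2\delta$. (When $L=0$ the moment condition is vacuous and one simply takes $\phi=\phi_0$.)

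Next I would build a dyadic partition of unity adapted to $\delta$. Starting from a radial profile $\chi\in C^\infty(\Rn)$ that equals $1$ on $\{|x|\le 1\}$, is supported in $\{|x|<2\}$, and is radially non-increasing (strictly so where $0<\chi<1$), I set $\theta_0(\xi):=\chi(\xi/\delta)$ and $\theta(\xi):=\theta_0(\xi)-\theta_0(2\xi)$. A direct check gives $\theta_0,\theta\ge 0$ with $\theta_0(\xi)>0$ iff $|\xi|<2\delta$ and $\theta(\xi)>0$ iff $\delta/2<|\xi|<2\delta$, together with the telescoping identity $\theta_0(\xi)+\sum_{j=1}^{N}\theta(2^{-j}\xi)=\theta_0(2^{-N}\xi)$, so that $\theta_0+\sum_{j\ge 1}\theta(2^{-j}\cdot)\equiv 1$ on $\Rn$. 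I then define $\hat\psi_0:=\theta_0/\hat\phi_0$ and $\hat\psi:=\theta/\hat\phi$, understood to be $0$ where the numerators vanish. Since $\theta_0$ (resp.\ $\theta$) is smooth with support inside $\{|\xi|\le 2\delta\}$ (resp.\ inside the closed annulus), a set on which $\hat\phi_0$ (resp.\ $\hat\phi$) is smooth and nowhere zero, both $\hat\psi_0$ and $\hat\psi$ belong to $C^\infty_c(\Rn)$ and have exactly the prescribed positivity sets; hence $\psi_0:=(\hat\psi_0)^\vee$ and $\psi:=(\hat\psi)^\vee$ lie in $\cS$. By construction $\hat\phi_0\hat\psi_0=\theta_0$ and $\hat\phi(2^{-j}\cdot)\hat\psi(2^{-j}\cdot)=\theta(2^{-j}\cdot)$ for every $j\ge 1$, so the resolution of unity becomes $\hat\phi_0\hat\psi_0+\sum_{j\ge 1}\hat\phi(2^{-j}\cdot)\hat\psi(2^{-j}\cdot)\equiv 1$.

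It then remains to promote this pointwise identity to convergence in $\cS'$. Given $f\in\cS'$ and $\varphi\in\cS$, multiplying the $N$-th partial-sum identity of the previous step by $\varphi$ yields $\hat\phi_0\hat\psi_0\,\varphi+\sum_{j=1}^{N}\hat\phi(2^{-j}\cdot)\hat\psi(2^{-j}\cdot)\varphi=\theta_0(2^{-N}\cdot)\varphi$, so the $N$-th partial sum of $\langle\hat\phi_0\hat\psi_0 f,\varphi\rangle+\sum_j\langle\hat\phi(2^{-j}\cdot)\hat\psi(2^{-j}\cdot)f,\varphi\rangle$ equals $\langle f,\theta_0(2^{-N}\cdot)\varphi\rangle$. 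Since $\theta_0(2^{-N}\cdot)\equiv 1$ on $\{|\xi|\le 2^{N}\delta\}$ and $\varphi$ is Schwartz, one has $\theta_0(2^{-N}\cdot)\varphi\to\varphi$ in $\cS$, and the continuity of $f$ on $\cS$ gives convergence to $\langle f,\varphi\rangle$. As $\varphi$ was arbitrary, this is precisely the asserted identity.

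I expect the only genuinely delicate point to be the bookkeeping in the second step: one must choose the profile $\chi$ so that $\theta_0$ and $\theta$ have closed supports yet are strictly positive on the corresponding open sets — to match the ``iff'' clauses on $\hat\psi_0$ and $\hat\psi$ — while simultaneously preserving the telescoping structure that produces the resolution of unity. Once that is arranged, the division by $\hat\phi_0$ and $\hat\phi$ (legitimate thanks to the choice of $\delta$) and the $\cS'$-convergence argument are routine.
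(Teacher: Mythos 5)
Your construction is correct and is essentially the standard one the paper points to for this lemma (which it does not prove itself, referring instead to \cite{Kem10} and the proof of Theorem~2.6(ii) in \cite{FraJ85}): moment conditions via $\phi=(-\Delta)^L\phi_0$, a telescoping dyadic resolution of unity $\theta_0+\sum_{j\ge1}\theta(2^{-j}\cdot)\equiv 1$, division by the nonvanishing Fourier transforms to define $\hat\psi_0,\hat\psi$, and the routine $\cS$-convergence $\theta_0(2^{-N}\cdot)\varphi\to\varphi$ to get the identity in $\cS'$. Only tighten the wording on the profile: require $\chi>0$ exactly on $\{|x|<2\}$ (so $\supp\chi$ is the \emph{closed} ball, not contained in the open one) and $\chi$ strictly decreasing for $1\le|x|\le2$, since as literally stated (``supported in $\{|x|<2\}$'', strict monotonicity only where $0<\chi<1$) the ``iff'' clauses for $\hat\psi_0$ and $\hat\psi$ could fail — exactly the bookkeeping point you flagged, and it is easily arranged.
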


\emph{Step 1}: Proof of assertion (a). Let $f\in\cS'$. For the given $L$ and $\sigma:=\tfrac{1}{2}(d-1)>0$, consider $\delta, \phi_0, \phi,\psi_0,\psi$ according to the previous lemma and apply the corresponding formula to $\hat{f}\in\cS'$. Then
\begin{equation}\label{formula-applied}
(2\pi)^n f=\phi_0 \ast\psi_0\ast f + \sum_{j=1}^\infty \phi_j \ast\psi_j \ast f \ \ \ \ \ (\text{convergence in} \ \ \cS'),
\end{equation}
with $\phi_j=2^{jn}\phi(2^j\cdot)$ and $\psi_j=2^{jn}\psi(2^j\cdot)$, $j\in\N$. Simple calculations show that, for fixed $j\in\Nz$ and $x\in\Rn$, we get
\[
(\phi_j \ast\psi_j \ast f)(x)= \sum_{m\in\Zn} \int_{Q_{jm}}\phi_j(x-y)(\psi_j\ast f)(y)\,dy =(2\pi)^n \sum_{m\in\Zn}\lambda_{jm}\,a_{jm}(x)
\]
where
\[
\lambda_{jm} := C_{K,\Phi} \, \sup_{y\in Q_{jm}} |(\psi_j \ast f)(y)| \ \ \ \ \ \text{with} \ \ \ C_{K,\Phi}:=\max_{\Phi\in\{\phi_0,\phi\} \atop |\beta|\le K} \sup_{y\in\Rn} |D^\beta \Phi(y)|
\]
and
\[
a_{jm}(x):= \frac{(2\pi)^{-n}}{\lambda_{jm}} \int_{Q_{jm}}\phi_j(x-y)(\psi_j\ast f)(y)\,dy \ \ \ \ \ \text{if} \ \ \lambda_{jm} \neq 0,
\]
otherwise we take $a_{jm}(x) \equiv 0$.\\
After some calculations with the auxiliary  functions from Lemma~\ref{formula} we conclude that $a_{0m}$ and $a_{jm}$, $j\in\N$, are indeed $(K,0,d)$-atoms  and $(K,L,d)$-atoms, respectively, and, moreover, they are Schwartz functions (note that they have continuous derivatives of all orders).\\
Finally we note that from \eqref{formula-applied} and the calculations above we  have
$$
f=\sum_{j=0}^\infty \sum_{m\in\Zn} \lambda_{jm} a_{jm}
$$
with the outer sum converging in $\cS'$ and the inner sum taken in the pointwise sense.

\emph{Step 2}: Proof of assertion (b).  Let $f\in\A$. We show that the resulting $\lambda(f):=(\lambda_{jm})$ as defined above is in the corresponding sequence space $\as$ and its quasi-norm is controlled from above by a constant times $\|f\,|\,\A\|$.\\
For each $j\in\Nz$ and $x\in \cup_{m\in\Zn} \mathring{Q}_{jm}$, we have
\[
\big| \sum_{m\in\Zn} \lambda_{jm}\chi_{jm}(x)\big| \le C_{K,\Phi} \, \sup_{y\in Q_{jm}} |(\psi_j \ast f)(y)|,
\]
where the $m$ on the right-hand side is the only $m\in\Zn$ such that $x\in Q_{jm}$. Multiplying both sides of the inequality above by $w_j(x)$ and noting that $2^j|x-y|\le \sqrt{n}$ when $x$ and $y$ belong to the same cube $Q_{jm}$, we get
\[
\big| \sum_{m\in\Zn} w_j(x) \lambda_{jm}\chi_{jm}(x)\big| \le (1+n^{\tau/2})\,C_{K,\Phi} \,w_j(x) \big( \psi^\ast_j f\big)_\tau(x),
\]
almost everywhere, where
\[
\big( \psi^\ast_j f\big)_\tau(x):= \sup_{y\in\Rn} \frac{|\psi_j\ast f(y)|}{1+|2^j(x-y)|^{\tau}}
\]
stands for the usual Peetre maximal functions (here $\tau>0$ is arbitrary). It is easy to check that the functions $\psi_0,\psi$ above fit in the requirements of \cite[Theorem~3.1]{AlmC15a} if we pick a positive $\varepsilon$ there appropriate to the previously fixed $\delta>0$, namely taking $\varepsilon\in(\delta,2\delta)$. If we are careful in choosing $\tau>0$ large enough according to the same theorem, then we obtain
\[
  \|\lambda(f)\,| \bs\| \lesssim  \|\big(w_j\, (\psi^\ast_j f)_\tau\big)_j\,| \ell_\qx(L_\px)\| \approx \|f\,| \B\| < \infty
\]
in the $B$ case, and similarly in the $F$ case. By an argument similar to the one used in Step~1 of the proof of Proposition~\ref{pro:conv} (see Subsection~\ref{sec:proof-pro}), now adapted to our situation dealing with atoms, we can show that, with no further assumptions, the inner sum above also converges in $\cS'$ to the regular distribution given by the corresponding pointwise sum. The proof is then complete.\hfill $\square$

\subsection*{Acknowledgement} We would like to thank Henning Kempka for a discussion about the proof of Lemma~\ref{lem:eta-instaed-maximal} and on the size conditions on $M$ referred in Remark~\ref{rem:atoms-also}.




\begin{thebibliography}{99}

\bibitem{AceM01}
E.\ Acerbi and G.\ Mingione: Regularity results for a class of
functionals with nonstandard growth, \textit{Arch.\ Ration.\ Mech.\
Anal.}~\textbf{156} (2001), no.~2, 121--140.
%


\bibitem{AlmC15a}
A.\ Almeida and A.\ Caetano:
On $2$-microlocal spaces with all exponents variable,
arXiv:1508.02356, 2015.

\bibitem{AlmH10}
A.\ Almeida and P.\ H\"ast\"o:
Besov spaces with variable smoothness and integrability,
\textit{J.\ Funct.\ Anal.}~\textbf{258} (2010), no.~5, 1628--1655.


\bibitem{AlmS06}
A.\ Almeida and S.\ Samko:
Characterization of Riesz and Bessel potentials on variable Lebesgue
spaces,
\textit{J.\ Function Spaces Appl.}~\textbf{4} (2006), no.~2, 113--144.

%




%

%

\bibitem{Bo84}
J.-M. \ Bony:
Second microlocalization and propagation of singularities for semilinear hyperbolic
equations, In: Taniguchi Symp. HERT, Katata, pp. 11--49, 1984.



\bibitem{Cae11}
A.\ Caetano:
On the type of convergence in atomic representations,
\textit{Complex Var. Elliptic Equ.}~\textbf{56} (2011), 875--883.


\bibitem{CheLR06}
Y.\ Chen, S.\ Levine and R.\ Rao:
Variable exponent, linear growth functionals in image restoration,
\textit{SIAM J.\ Appl.\ Math.}~\textbf{66} (2006), no.~4, 1383--1406.

\bibitem{C-UF13}
D.\ Cruz-uribe and A.\ Fiorenza:
Variable Lebesgue Spaces, Birkhäuser, Basel, 2013.



\bibitem{DHHR11} L.\ Diening, P.\ Harjulehto, P.\ H\"{a}st\"{o} and M.\
R{\r{u}}{\v{z}}i{\v{c}}ka: Lebesgue and Sobolev Spaces with Variable
Exponents, Lecture Notes in Mathematics, vol. \textbf{2017},
Springer-Verlag, Berlin, 2011.


\bibitem{DieHR09}
L.\ Diening, P.\ H\"{a}st\"{o} and S.\ Roudenko: Function spaces of variable
smoothness and integrability, \textit{J.\ Funct.\
Anal.}~\textbf{256} (2009), no.~6, 1731--1768.

\bibitem{Drihem12}
D.\ Drihem: Atomic decomposition of Besov spaces with variable smoothness and integrability,
\textit{J. Math. Anal. Appl.}~\textbf{389} (2012), 15--31.

\bibitem{EdTri96}
D.\ E.\ Edmunds and H.\ Triebel: \textit{Function Spaces, Entropy Numbers, Differential Operators},
Cambridge Univ. Press, Cambridge, 1996.


\bibitem{Fan07}
X.-L.\ Fan:
Global $C^{1,\alpha}$ regularity for variable exponent elliptic
equations in divergence form,
\textit{J.\ Differential Equations}~\textbf{235} (2007), no.~2, 397--417.

\bibitem{FarL06}
W.\ Farkas and H.-G.\ Leopold:
Characterisations of function spaces of generalised smoothness,
\textit{Ann.\ Mat.\ Pura Appl.}~\textbf{185} (2006), no.~1, 1--62.


\bibitem{FraJ85}
M. Frazier and B. Jawerth:
Decomposition of Besov spaces,
\textit{Indiana Univ. Math. J.}~\textbf{34} (1985), 777--799.



\bibitem{GonMN14}
H.\ F.\ Gon\c{c}alves, S.\ D.\ Moura and J.\ S.\ Neves:
On trace spaces of $2$-microlocal type spaces,
\textit{J. Funct. Anal.}~\textbf{267} (2014), 3444--3468.

\bibitem{GHN}
P.\ Gurka, P.\ Harjulehto and A.\ Nekvinda:
Bessel potential spaces with variable exponent,
\textit{Math.\ Inequal.\ Appl.}~\textbf{10} (2007), no.~3, 661--676.


\bibitem{HarHLT13}
P.\ Harjulehto, P.\ H\"ast\"o, V.\ Latvala and O.\ Toivanen:
Gamma convergence for functionals related to image restoration,
\textit{Appl. \ Math. \ Letters}~\textbf{26} (2013), 56--60.

\bibitem{HarHLN10}
P.\ Harjulehto, P.\ H\"{a}st\"{o}, U.\ L\^e and M.\ Nuortio:
Overview of differential equations with non-standard growth,
\textit{Nonlinear Anal.}~\textbf{72} (2010), no.~12, 4551--4574.

%


%

\bibitem{Jaf91}
S.\ Jaffard:
Pointwise smoothness, two-microlocalisation and wavelet coefficients,
\emph{Publ. Math.} \textbf{35} (1991), 155--168.

\bibitem{JafMey96}
S.\ Jaffard and Y.\ Meyer:
\emph{Wavelet Methods for Pointwise Regularity and Local Oscillations of Functions},
Memoirs of the AMS, Vol. 123 (1996).


\bibitem{KalLiz87}
G.\ Kalyabin and P.I.\ Lizorkin:
Spaces of  functions of generalized smoothness,
\textit{Math.\ Nachr.}~\textbf{133} (1987), 7--32.

\bibitem{Kem08}
H.\ Kempka:
\emph{Generalized 2-microlocal Besov spaces},
Ph.D. thesis, University of Jena, Germany, 2008.

\bibitem{Kem09}
H.\ Kempka:
2-microlocal Besov and Triebel--Lizorkin spaces of variable integrability,
\textit{Rev.\ Mat.\ Complut.}~\textbf{22} (2009), no.~1, 227--251.

\bibitem{Kem10}
H.\ Kempka:
Atomic, molecular and wavelet decomposition of 2-microlocal Besov and Triebel--Lizorkin spaces with variable integrability,
\textit{Funct.\ Approx.\ Comment.\ Math.}~\textbf{43} (2010), 171--208.

\bibitem{KemV12}
H.\ Kempka and J.\ Vyb\'\i{}ral:
Spaces of variable smoothness and integrability: Characterizations by local means and ball means of differences,
\textit{J. \ Fourier \ Anal. \ Appl.}~\textbf{18} (2012), no.~4, 852--891.

\bibitem{KemV13}
H.\ Kempka and J.\ Vyb\'\i{}ral:
A note on the spaces of variable integrability and summability of Almeida and H\"{a}st\"{o},
\textit{Proc.\ Amer.\ Math.\ Soc.}~\textbf{141}(9) (2013), 3207--3212.



\bibitem{KR91}
O.\ Kov\'{a}{\v c}ik and J.\ R\'{a}kosn\'{\i}k:
On spaces $L^{p(x)}$ and $W^{1,p(x)}$,
\emph{Czechoslovak Math.\ J.}~\textbf{41(116)} (1991), 592--618.

\bibitem{Kyr03}
G.\ Kyriazis:
Decomposition systems for function spaces,
\emph{Studia\ Math.}~\textbf{157} (2003), 133--169.


%
%

%








\bibitem{LZhang13}
S.\ Liang and J.\ Zhang:
Infinitely many small solutions for the p(x)-Laplacian operator with nonlinear boundary conditions,
\textit{Ann. Mat. Pura Appl.}~\textbf{192} (2013), 1--16.



\bibitem{Mou01}
S.\ D.\ Moura:
Function spaces of generalised smoothness,
\emph{Diss. Math.}~\textbf{398} (2001), 1--87.

\bibitem{MouNS13}
S.\ D.\ Moura, J.\ S.\ Neves and C.\ Schneider:
On trace spaces of $2$-microlocal Besov spaces with variable integrability,
\textit{Math. Nachr.}~\textbf{286} (2013), 1240--1254.



%
%
%
%

\bibitem{Pee75}
J.\ Peetre: On spaces of Triebel-Lizorkin type,
\emph{Ark. Math.} \textbf{13} 1975, 123-130.



%

\bibitem{Ruz00}
M.\ R\r u\v zi\v cka:
\textit{Electrorheological fluids: modeling and mathematical theory},
Lecture Notes in Mathematics, \textbf{1748}, Springer-Verlag, Berlin, 2000.

\bibitem{Rych99}
V.S. Rychkov: On a theorem of Bui, Paluszynski and Taibleson,
\textit{Proc. Steklov Inst. Math.}~\textbf{227} (1999), 280--292.

%
%
%


\bibitem{Tiir14}
J.\ Tiirola:
Image decompositions using spaces of variable smoothness and integrability,
\textit{SIAM J. Imaging Sci.}~\textbf{7} (2014), no.~3, 1558--1587.



%
%

\bibitem{Tri97}
H. Triebel: \textit{Fractals and Spectra}, Monographs in
Mathematics \textbf{91}, Birkh\"{a}user Verlag, Basel, 1997.




\bibitem{Tri83}
H.\ Triebel: \textit{Theory of Function Spaces}, Monographs in
Mathematics \textbf{78}, Birkh\"{a}user Verlag, Basel, 1983.






\bibitem{Vyb09}
J.\ Vyb\'\i{}ral:
Sobolev and Jawerth embeddings for spaces with variable
smoothness and integrability,
\textit{Ann.\ Acad.\ Sci.\ Fenn.\ Math.}~\textbf{34} (2009), no.~2, 529--544.


%


\bibitem{YZYuan15a}
D.\ Yang, C.\ Zhuo and W.\ Yuan:
Triebel-Lizorkin type spaces with variable exponents,
\textit{Banach J. Math. Anal.}~\textbf{9} (2015), no.~4, 146--202.

\bibitem{YZYuan15b}
D.\ Yang, C.\ Zhuo and W.\ Yuan:
Besov-type spaces with variable smoothness and integrability,
\textit{J. Funct. Anal.}~\textbf{269} (2015), no.~6, 1840-1898.



\end{thebibliography}
\end{document}